\documentclass[11pt,reqno]{amsart}

\usepackage{fullpage}
\usepackage{url}
\usepackage{bbm}
\usepackage{amssymb,amsmath}
\usepackage{mathrsfs,amsfonts,bm,amsthm,color,enumerate,stmaryrd}
\usepackage[all]{xy}
\definecolor{hot}{RGB}{65,105,225}
\usepackage[pagebackref=true,colorlinks=true, linkcolor=hot ,  citecolor=hot, urlcolor=hot]{hyperref}

\newtheorem{theorem}{Theorem}[section]
\newtheorem{lemma}[theorem]{Lemma}
\newtheorem{thrm}[theorem]{Theorem}

\newtheorem{conj}[theorem]{Conjecture}
\newtheorem{theorem-definition}[theorem]{Theorem-Definition}
\newtheorem{corollary}[theorem]{Corollary}
\newtheorem{cor}[theorem]{Corollary}
\newtheorem{proposition}[theorem]{Proposition}
\newtheorem{prop}[theorem]{Proposition}

\theoremstyle{definition}
\newtheorem{example}[theorem]{Example}
\newtheorem{definition}[theorem]{Definition}

\newtheorem{remark}[theorem]{Remark}
\newtheorem{rmk}[theorem]{Remark}

\numberwithin{figure}{section}


\def\bQ{\mathbb Q}
\def\bC{\mathbb C}
\def\bZ{\mathbb Z}
\def\divi{\mathrm{div}}
\def\cL{\mathcal L}
\def\be{\begin{equation}}
\def\ee{\end{equation}}
\def\bL{\mathbb L}
\def\bA{\mathbb A}
\def\cA{\mathcal A}
\def\bP{\mathbb P}
\def\ol{\overline}
\def\PV{\mathrm{PV}}
\def\cS{\mathcal S}
\def\Pic{\mathrm{Pic}}
\def\al{\alpha}
\def\bR{\mathbb R}
\def\cO{\mathcal O}
\def\Hom{\mathrm{Hom}}
\def\xra{\xrightarrow}

\def\cF{\mathcal F}
\def\cG{\mathcal G}
\def\cT{\mathcal T}
\def\ol{\overline}
\def\codim{\mathrm{codim}}

\author{Nero Budur}
\address{Department of Mathematics, KU Leuven, Celestijnenlaan 200B, 3001 Leuven, Belgium;  Yau Mathematical Sciences Center, Tsinghua University, 100084 Beijing, China;  Basque Center for Applied Mathematics, Mazarredo 14, 48009 Bilbao, Spain.}
\email{nero.budur@kuleuven.be}

\author{Quan Shi}
\address{Department of Mathematical Sciences, Tsinghua University, Beijing, 100084, P. R. China.}
\email{shiq24@mails.tsinghua.edu.cn / thusq20@gmail.com}

\author{Huaiqing Zuo}
\address{Department of Mathematical Sciences, Tsinghua University, Beijing, 100084, P. R. China.}
\email{hqzuo@mail.tsinghua.edu.cn}

\begin{document}

\title{Motivic principal value integrals for hyperplane arrangements}

	\begin{abstract} 
	A conjecture of Denef-Jacobs-Veys relates motivic principal value integrals of multivalued rational top-forms with cohomology support loci of rank one local systems. We give a stronger positive answer to this conjecture for hyperplane arrangements. 
	\end{abstract}

\maketitle

\setcounter{tocdepth}{1}
\tableofcontents

\setlength{\parindent}{0pt}
\setlength{\parskip}{3pt}

	\section{Introduction}
	\subsection{}
	Let $X$ be a smooth irreducible complex projective variety of dimension $n$. Consider a multivalued rational $n$-form $\omega^{1/q}$ on $X$, that is, $\omega \in (\Omega^n_{K(X)/\bC})^{\otimes q}$ with $q\in\bZ_{ \ge 1}$, where $\Omega^n_{K(X)/\bC}$ is the vector space of rational $n$-forms on $X$. Let $\textstyle\divi(\omega^{1/q})=\sum_{i\in S}(a_i-1)E_i$ be the divisor defined by $\omega^{1/q}$, with $a_i\in\frac{1}{q}\bZ$, and $E_i$ with $i\in S$ the irreducible components of the support $|\divi(\omega^{1/q})|$, so in particular $a_i\neq 1$. Then $\divi(\omega^{1/q})$ is $\bQ$-linearly equivalent to the canonical divisor $K_X$. We will assume that it is a normal crossings divisor, and that $\omega^{1/q}$ has no logarithmic poles, that is, $a_i\neq 0$ for all $i\in S$. 
		
	Two objects can now be constructed. The first  is a rank one local system $\cL(\omega^{1/q})$ on the complement $X\setminus |\divi(\omega^{1/q})|$  in the analytic topology. The sections  over a non-empty connected open subset are the local analytic branches of $\omega^{1/q}$ up to multiplication by complex numbers. See Remark \ref{rmkPictau} for a generalization of this construction.
 		
	Cohomology support loci, and more generally, cohomology jump loci of rank one local systems on smooth complex algebraic varieties admit a strong geometric and arithmetic structure, see \cite{BWabs}. On the other hand, the determination of cohomology support loci is a difficult task even in the case of complements of hyperplane arrangements, for which a folklore conjecture claims that these loci are combinatorially determined.

A second object  attached to $\omega^{1/q}$ is the {\it motivic principal value integral} defined by Veys  \cite{Motivic_Principal_Value_Integrals_Veys}, 
\be\label{eqPVI}\textstyle 
		\PV \int_X \omega^{1/q} := \mathbb L^{-n}\sum_{I\subset S} [E_I^\circ] \prod_{i\in I} \frac{\mathbb L-1}{\mathbb L^{a_i}-1}.
	\ee
Here $E_I^\circ = (\cap_{i\in I} E_i)\setminus (\cup_{j\in S\setminus I} E_j)$, and  $[\_]$ denotes the class of a variety in the Grothendieck ring $K_0(Var_\bC)$ of complex algebraic varieties, $\mathbb L$ is the class of $\mathbb \bA^1$, and (\ref{eqPVI}) is defined in a localization of an extension of $K_0(Var_\bC)$ to fractional powers of $\bL$, see \ref{subsmpvi}. 
	
	The real and $p$-adic versions of the principal value integrals were introduced by Langlands  \cite{Langlands1, Langlands2} to study orbital integrals. They  appear as coefficients of asymptotic expansions of oscillating integrals and fiber integrals, as residues of poles of distributions given by complex powers $\vert f\vert^{\lambda}$ for a polynomial $f$,  of Igusa $p$-adic zeta functions, see \cite{Distribution_3, Distribution_1, Distribution_2, On_the_Vanishing_of_Principal_Value_Integrals_Denef_Jacobs}. The motivic principal value integrals appear as residues of Denef-Loeser motivic zeta functions, see \cite{Vanishing_of_Principal_Value_Integrals_on_Surfaces}. Hence the non-vanishing of the motivic principal value integrals can help determine some poles of motivic zeta functions. The poles of the motivic zeta functions are  related by the monodromy conjecture of Igusa-Denef-Loeser with  roots of  $b$-functions and Milnor monodromy eigenvalues.

The following  was posed by Denef-Jacobs \cite{On_the_Vanishing_of_Principal_Value_Integrals_Denef_Jacobs} in a $p$-adic setup, and in this form by Veys \cite{Vanishing_of_Principal_Value_Integrals_on_Surfaces}:

\begin{conj}\label{DJV_Conjecture} Let $X$ be a smooth projective complex variety and $\omega^{1/q}$  a multivalued rational $n$-form on $X$ with no logarithmic poles and whose divisor has normal crossings support (and, maybe, all $a_i\not\in\bZ$). 
If $H^i(X\setminus|\divi(\omega^{1/q})|,\mathcal L(\omega^{1/q})) = 0$ for all $i\in\bZ$, then $\PV \int_X \omega^{1/q} = 0$.
\end{conj}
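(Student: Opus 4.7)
My approach is to prove the conjecture by reducing $\PV\int_X\omega^{1/q}$ to a combinatorial sum indexed by the intersection lattice of the arrangement, and then matching this sum with the combinatorial description of the cohomology of $\cL(\omega^{1/q})$ provided by the Aomoto/Orlik--Solomon complex, aiming for an exact motivic formula that makes the stronger claim of the abstract transparent.

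\emph{Step 1: Wonderful model.} For a hyperplane arrangement $\cA$ (reducing the general setup to $X=\bP^n$ if convenient), I would pass to a De Concini--Procesi wonderful compactification $Y\to X$, obtained by successively blowing up dense edges in an order compatible with inclusion. On $Y$ the divisor $|\divi(\omega^{1/q})|$ becomes a normal crossings divisor whose components are indexed by the hyperplanes of $\cA$ together with the dense flats, and the weight on the exceptional divisor over a dense flat $L$ is computable combinatorially from the weights on the hyperplanes through $L$ and $\codim(L)$. The hypotheses ``normal crossings support'' and ``no logarithmic poles'' then translate into explicit numerical conditions on the weights at every flat.

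\emph{Step 2: Flag-indexed reorganization of $\PV$.} Using the local product structure of $Y$ around each closed stratum, I would rearrange the sum in (\ref{eqPVI}) as a sum over flags (nested sets) in the intersection lattice. Each flag contributes a product of local factors depending only on the weights at the flats it visits and motivic classes of iterated projective bundles coming from the blow-ups. A standard inclusion--exclusion replacing the $[E_I^\circ]$ by $[E_I]$ (absorbing the resulting $\prod(\bL-\bL^{a_i})/(\bL^{a_i}-1)$) converts each term to an explicit rational expression in $\bL$ and the weights, decoupling the arrangement combinatorics from the arithmetic in the Grothendieck ring.

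\emph{Step 3: Comparison with cohomology.} The rank-one local system cohomology on a hyperplane arrangement complement is controlled by the Aomoto complex $(A^\bullet(\cA), d_\omega)$, via non-resonance theorems of Esnault--Schechtman--Viehweg, Schechtman--Varchenko, and Cohen--Dimca--Orlik, supplemented by deletion--restriction or essential decompositions to handle resonant situations. The core technical step is to establish a term-by-term correspondence realizing the flag-indexed sum from Step 2 as a motivic refinement of the Euler characteristic of $(A^\bullet(\cA), d_\omega)$, so that the vanishing hypothesis on $H^*(X\setminus|\divi|, \cL(\omega^{1/q}))$ forces the motivic $\PV$ sum to collapse. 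This gives the stronger conclusion: an exact motivic formula relating $\PV$ to Aomoto data, not merely the implication demanded by the conjecture.

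\emph{Main obstacle.} The crux lies in Step 3: reconciling two a priori different combinatorial structures. The motivic $\PV$ naturally decomposes along flags of flats via the wonderful model, whereas $H^*(X\setminus|\divi|, \cL)$ is naturally described via NBC bases and resonant subarrangements. Finding a motivic refinement of the Aomoto complex compatible with both pictures---or equivalently, a direct generating-function identity between the flag sum and the Orlik--Solomon Euler polynomial with $\omega$-twisted differential---is where the real work is concentrated.
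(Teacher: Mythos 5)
Your Steps 1 and 2 (wonderful model, flag-indexed reorganization of $\PV$) are essentially what the paper does with the canonical log resolution $\mathbb P(V)^{\mathcal S}$ and Corollary \ref{E_I_circ forumla}. But Step 3, which you yourself flag as ``the crux'' and ``the main obstacle,'' is never carried out---and this is precisely where your proposal stops being a proof. More importantly, the paper reveals that no such comparison between the flag-indexed $\PV$ sum and the Aomoto/Orlik--Solomon complex is needed at all. Its argument for Theorem \ref{thm1} rests on a dichotomy that makes the cohomology hypothesis either vacuous or irrelevant: if the cone $A$ over $\cA$ is indecomposable, then $\chi(U)\neq 0$ by the Schechtman--Terao--Varchenko criterion, and since the Euler characteristic of $U$ with coefficients in \emph{any} rank-one local system equals $\chi(U)$, the hypothesis $H^*(U,\cL(\omega^{1/q}))=0$ can never hold, so the conjectural implication is vacuously true. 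If instead $A$ is decomposable (or not essential), then the formal motivic principal value integral $\cF_\cS$ vanishes identically (Propositions \ref{non-essential PVI vanishes} and \ref{Vanishing_in_decomposable_case}), giving $\PV\int_X\omega^{1/q}=0$ for every admissible $\omega^{1/q}$, independent of any cohomological condition. The extension to forms supported only on the dense-edge divisors goes through because the non-dense strata contribute Euler characteristic zero.

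So the genuine gap is that your proof is incomplete exactly where all the difficulty sits, and the route you chose is harder than necessary: you aim to refine the Aomoto Euler polynomial motivically and match it to the $\PV$ sum flag by flag, reconciling nested-set combinatorics with NBC/resonance combinatorics. The paper's proof never touches the twisted cohomology ring. What it requires instead is a self-contained combinatorial vanishing result for $\cF_\cS$ in the decomposable case (Proposition \ref{Vanishing_in_decomposable_case}), proved by induction on $\dim V$ through an analysis of the possible poles $c_W$, together with Lemma \ref{Vanishing for concentrated sum} and the reduction in Lemma \ref{Decomposable constant FPVI is zero}. That pole analysis and the decomposability induction are the parts your sketch does not anticipate; without them, or without the motivic Aomoto identity you postpone, there is no proof.
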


	This would relate motivic principal value integrals with cohomology support loci of rank one local systems, one of the reasons why this conjecture is interesting.

While the monodromy conjecture is known by now for a handful of classes of examples, the only non-trivial case known so far of Conjecture \ref{DJV_Conjecture} is for the case when  $X$ is a rational surface and $|\divi(\omega^{1/q})|$ is connected,  by \cite[0.4]{Vanishing_of_Principal_Value_Integrals_on_Surfaces}. In fact, he proved that less is required in this case for the vanishing of $\PV \int_X \omega^{1/q}$, namely,  that there exists a connected normal crossings divisor $B$ containing $|\divi(\omega^{1/q})|$ such that the topological Euler characteristic $\chi(X\setminus B)\le 0$.
	
	In this paper we prove Conjecture \ref{DJV_Conjecture} for the canonical log resolution of a hyperplane arrangement. Like for the case of surfaces, we  prove a stronger statement.  An {\it affine} (respectively {\it projective}) {\it hyperplane arrangement} is the union of a finite non-empty collection of hyperplanes in  $\bC^n$ (respectively, in $\bP^n$). An {\it edge} of a hyperplane arrangement, in $\bC^n$ or $\bP^n$, is any intersection of a non-empty subset of hyperplanes in the arrangement. An affine hyperplane arrangement in $\bC^n$ is {\it decomposable} if there exists a linear change of coordinates on $\bC^n$ for which a polynomial defining the arrangement can be written as the product of two non-constant polynomials in disjoint sets of variables. Otherwise the arrangement is called {\it indecomposable}. 
An edge $W$ of an affine hyperplane arrangement $\bC^n$ is called {\it dense} if the
hyperplane arrangement induced in $\bC^n/W$ is indecomposable. An edge of a projective arrangement is {\it dense} if  its cone is a dense edge of the affine cone of the arrangement.
An affine hyperplane arrangement  is {\it essential} if there is an edge of dimension zero. An affine hyperplane arrangement is {\it central} if all hyperplanes contain  $\bm 0\in\bC^n$; in this case we can form the associated projective arrangement in $\bP^{n-1}$. Conversely, the cone over a projective hyperplane arrangement in $\bP^{n-1}$ is a central affine hyperplane arrangement in $\bC^{n}$.
	
	Let $\cA$ be a hyperplane arrangement in $\bP^n$. If $X$ is the blowup of (the strict transforms of) all edges of $\cA$ inductively by increasing dimension, then the inverse image of $\cA$ is a simple normal crossings divisor in $X$ by \cite{ESV}. We call $X$ the {\it canonical log resolution} of $(\bP^n,\cA)$.

\begin{thrm}\label{thm1} Let $X$ be the canonical log resolution of a hyperplane arrangement $\cA\subset\bP^n$, and let $\omega^{1/q}$ be a multivalued rational $n$-form on $X$ without logarithmic poles and such that the support  of $\divi(\omega^{1/q})$ contains the strict transforms of all dense edges. Then Conjecture \ref{DJV_Conjecture} holds (no need to assume that all $a_i\not\in \bZ$).
\end{thrm}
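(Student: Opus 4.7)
The approach is to combine the combinatorial structure of the canonical log resolution of $\cA$ with vanishing theorems for rank-one local systems on arrangement complements, and to run an induction on $n$. Set $D = |\mathrm{div}(\omega^{1/q})|$ and $U = X \setminus D$. The irreducible boundary components $E_i$ split into strict transforms $\widetilde H$ of hyperplanes $H\in\cA$ and exceptional divisors $E_W$ over the successively blown-up edges $W$ of $\cA$. Under the theorem's hypotheses, every dense edge $W$ has $E_W\subset D$ with $a_W\neq 0$, and the no-log-pole condition gives $a_i\neq 0$ for all $i\in S$. This dense-edge condition is precisely the combinatorial input used in the non-resonance vanishing theorems (of Esnault--Schechtman--Viehweg, Cohen--Dimca--Orlik, and Libgober--Yuzvinsky) for local systems on arrangement complements, and will be the engine that converts the cohomology hypothesis into something tractable.

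My first step would be to expand (\ref{eqPVI}) via the De Concini--Procesi description of the strata $E_I^\circ$ of the canonical log resolution. The non-empty strata correspond to nested subsets $I$ of the building set of all edges of $\cA$, and each $[E_I^\circ]$ admits a product decomposition in terms of motivic classes of arrangement complements inside the projectivized normal spaces along the chain of dense edges appearing in $I$. Regrouping the sum in (\ref{eqPVI}) according to this chain recasts the PV integral as an outer sum over chains of dense edges multiplied by inner PV integrals for smaller arrangements obtained by restriction to a dense edge. Crucially, both the no-log-pole and dense-edge support conditions are preserved under restriction to a dense edge, so the inductive hypothesis applies to each inner factor. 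The residues $a_W$ at exceptional divisors are determined by the standard formula in terms of the residues along the hyperplanes containing $W$, which makes the arithmetic inside each inner integral explicit.

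The second step converts the cohomology hypothesis into a scalar identity. Dense-edge non-resonance concentrates the cohomology of $\cL(\omega^{1/q})$ in top degree $n$, so the hypothesis $H^\ast(U,\cL)=0$ reduces to the topological statement $\chi(U)=0$; by Orlik--Solomon, the latter is an explicit alternating sum over the intersection lattice. The final and most delicate step is to match this scalar identity with the motivic expansion from Step 1: once the inductive hypothesis has cleaned up all the inner integrals, the remaining outer sum must collapse to a multiple of $\chi(U)$ inside the appropriate localization of $K_0(\mathrm{Var}_\bC)$. This matching -- between characteristic polynomials of sub-arrangements along nested chains and the factors $\tfrac{\bL-1}{\bL^{a_i}-1}$ -- is the technical heart of the argument and the principal obstacle, because the PV integral records fractional-power $\bL$-information that $\chi(U)$ alone cannot see, so the proof must exhibit a genuine cancellation governed by the poset of dense edges rather than rely on a single specialization $\bL\to 1$. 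The base case $n=1$ reduces to verifying that $\sum 1/a_i = 0$ whenever $\chi(\bP^1\setminus D)=0$, which is immediate, and the case $n=2$ provides a sanity check against Veys' surface result.
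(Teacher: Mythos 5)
Your overall strategy has the right global shape -- reduce the cohomological hypothesis to $\chi(U)=0$ and then show the PV integral vanishes under this condition -- but there are two substantive problems, one cosmetic and one real.

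First, the reduction to $\chi(U)=0$ does not need (and in fact cannot cleanly use) ESV/CDO/LY-type non-resonance concentration. The hypothesis that $|\divi(\omega^{1/q})|$ contains $E_W$ for dense $W$ only ensures $b_W\neq 1$; combined with no log poles this gives $b_W\notin\{0,1\}$ for dense $W$, which is strictly weaker than the full non-resonance condition $b_W\notin\bZ$ (or $\notin\bZ_{\ge 0}$, depending on convention) that concentration theorems require. Fortunately the reduction is available for free: for any rank-one local system $\cL$ on a finite CW complex $U$, $\chi(U,\cL)=\chi(U)$, so $H^*(U,\cL)=0$ already forces $\chi(U)=0$. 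This is what the paper uses; no concentration is needed. The role of the dense-edge hypothesis in the paper is different: it ensures $\chi(X\setminus|\divi(\omega^{1/q})|)=\chi(U)$, because chains $I\subset\cS$ whose top element is a non-dense edge contribute $\chi(E_I^\circ)=0$ (Corollary~\ref{E_I_circ forumla} plus the Schechtman--Terao--Varchenko criterion).

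The real gap is in your third step, which you yourself flag as ``the technical heart of the argument and the principal obstacle.'' You propose to match $\chi(U)=0$ against a regrouped motivic expansion over chains of dense edges, but you offer no mechanism for the cancellation, and you correctly note that $\chi(U)$ loses the fractional-power $\bL$-information, so a specialization $\bL\to 1$ cannot suffice. The missing structural insight is that $\chi(U)=0$ is equivalent to decomposability of the cone $A$ over $\cA$ (again Schechtman--Terao--Varchenko), and that decomposable arrangements satisfy the \emph{unconditional} vanishing $\cF_\cS=0$ for the formal motivic PV integral (Proposition~\ref{Vanishing_in_decomposable_case}). In other words, the proof does not ``match'' a scalar identity with a motivic one; it splits into a dichotomy. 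If $A$ is indecomposable then $\chi(U)\neq 0$, so the hypothesis $H^*(U,\cL)=0$ can never hold and the conjecture is vacuous. If $A$ is decomposable, one proves $\cF_\cS=0$ directly by induction on dimension, using the product structure $V=V'\times V''$ together with an analysis of poles of $\cF_\cS$ (Lemmas~\ref{FPVI with no pole}, \ref{Vanishing for concentrated sum}, \ref{Decomposable constant FPVI is zero}), not by recovering the vanishing from $\chi(U)=0$. Your proposed induction -- over restrictions to dense edges -- does not track this tensor decomposition and is unlikely to produce the cancellation, because the dense-edge poset does not by itself detect decomposability. Finally, your base case $n=1$ is imprecise: $\sum_i 1/a_i=0$ is only the specialization $\bL\to 1$ of $\PV\int_{\bP^1}\omega^{1/q}$; the full vanishing in $\cS_q$ still has to be checked (it does hold when $d=2$, but not via that identity alone).
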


This will follow from the vanishing of $\PV\int_X\omega^{1/q}$ in the case that the affine cone in $\bC^{n+1}$ over $\cA$  is a decomposable or not essential hyperplane arrangement, proven in \ref{subNndd}. The proof makes use of a particular structural result for the complement in this case, as in the case of \cite{Vanishing_of_Principal_Value_Integrals_on_Surfaces}. The difficulty with Conjecture \ref{DJV_Conjecture} in general is the current lack of structural results or classification for complements $X\setminus|\divi(\omega^{1/q})|$, and more generally for quasi-projective varieties $U$, that have  trivial topological Euler characteristic, since the cohomology of a rank-one local system on $U$ vanishes if and only if $\chi(U)=0$, see \cite[Proposition 2.5.4]{Di}.
	
Since the non-vanishing and, more generally, the determination of motivic principal value integrals is also an interesting problem, we give a few  results in this direction. 
Write $\cA\subset\bP^n$ as the projectivization of a central hyperplane arrangement $A=\cup_{i=1}^dV_i\subset\bC^{n+1}$ where $V_i$ are mutually distinct hyperplanes. Then the set of edges of $\cA$ is in bijection with the set $\cS$ of edges different than the origin of $A$. With $X$ and $\omega^{1/q}$ as in Theorem \ref{thm1}, the set $\cS$ is  in bijection with the set of irreducible components of the inverse image of $\cA$ in $X$, and we denote by $E_W$ the component corresponding to $W\in\cS$. We write $\divi(\omega^{1/q})=\sum_{W\in\cS}(a_W-1)E_W$. When $W=V_i$ for some $i\in\{1,\ldots,d\}$, we set $a_i=a_W$.
For $W\in \mathcal S$, set $b_W := \mathrm{codim}\, W+\sum_{W \subset V_i} (a_i-1)$.

\begin{prop}\label{prop1} Let $X$ be as in Theorem \ref{thm1}. Let $\omega^{1/q}$ be a multivalued rational $n$-form on $X$ without logarithmic poles whose divisor has support contained in the inverse image of $\cA$ in $X$. Then the constant term of $\mathbb L^n \cdot \PV\int_{X} \omega^{1/q}$ as a power series in $\bL^{1/q}$ is 
\be\textstyle
1+\sum_{r\ge 0}(-1)^r\cdot\#\{W_0\subsetneq \ldots  \subsetneq W_r\mid W_i\in \cS \text{ and } b_{W_i}<0 \text{ for all }i=0,\ldots, r\}.
\ee
\end{prop}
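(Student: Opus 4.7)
The plan is to expand each factor in (1.1) as a power series in $\mathbb{L}^{1/q}$, reorganize the sum via inclusion--exclusion, and exploit the chain structure of multi-intersections in the canonical log resolution.

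I would first verify that $a_W = b_W$ for every $W \in \cS$. Since $\pi : X \to \bP^n$ is birational, every rational top form on $X$ is the pullback of a rational top form on $\bP^n$; applying the discrepancy formula iteratively along the blow-up tower of the canonical resolution (each blow-up of a smooth codim-$c$ center contributing $(c-1)E_W$) shows that the coefficient of $E_W$ in $\divi(\omega^{1/q})$ equals $b_W-1$. Hence $a_W = b_W$ and the factors in (1.1) can be written entirely in terms of the combinatorial data $\{b_W\}$.

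Next, for each $W \in \cS$ (with the convention $a_W = 1$, hence $C_W = 0$, when $W$ is not in the support) I would set $\frac{\mathbb{L}-1}{\mathbb{L}^{b_W}-1} = 1 + C_W$. Expanding $\prod_{W \in I}(1+C_W) = \sum_{K \subseteq I}\prod_{W \in K} C_W$, interchanging the sums over $I$ and $K$ in (1.1), and using the partition identity $[E_K] = \sum_{I \supseteq K}[E_I^\circ]$ with $E_K := \bigcap_{W \in K} E_W$ yields
\[
\mathbb{L}^n \cdot \PV\!\int_X \omega^{1/q} \;=\; \sum_{K \subseteq \cS} [E_K]\prod_{W \in K} C_W.
\]
A direct power-series calculation shows that $C_W$ has nonzero constant term in $\mathbb{L}^{1/q}$ precisely when $b_W < 0$, while for $b_W > 0$ the leading term of $C_W$ sits in strictly positive degree. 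Therefore only those $K$ contained in $\cS_- := \{W \in \cS : b_W < 0\}$ contribute to the constant term.

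The crucial geometric input, and where I expect the real work to lie, is that in the canonical log resolution of a hyperplane arrangement the closed intersection $E_K$ is non-empty if and only if $K$ is a chain in the inclusion poset on $\cS$, and for every such chain $E_K$ has an iterated projective bundle structure with $[E_K]|_{\mathbb{L}=0}=1$. This should follow by induction along the blow-up sequence of the canonical resolution: ordering blow-ups by increasing dimension separates the strict transforms of incomparable edges, while exceptional divisors over comparable edges meet transversely, each blow-up contributing a projective-bundle factor to $E_K$. Combining this with the previous paragraph, the constant term of $\mathbb{L}^n\cdot\PV\!\int_X\omega^{1/q}$ reduces to a signed count of chains in $\cS_-$; carefully tracking the signs through the expansion of the $C_W$'s and separating the empty-chain contribution (the leading $1$) from the non-empty chains $W_0 \subsetneq \cdots \subsetneq W_r$ indexed by $r \geq 0$ then yields the formula in the statement.
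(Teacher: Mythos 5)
Your outline is the paper's own approach: rewrite $\bL^n\cdot\PV\!\int$ as $\sum_K [E_K]\prod_{W\in K}C_W$ over chains $K$ (this is Remark~\ref{rmkClo} together with Proposition~\ref{propBS}), note that $C_W$ has constant term $-1$ when $b_W<0$ and $0$ when $b_W>0$, and reduce to the claim that $[E_K]$ contributes $1$. But the place you flag as ``where I expect the real work to lie'' is exactly where your sketch goes wrong. $E_K$ is not an iterated projective bundle: by Proposition~\ref{propBS}, for a chain $W_1\subsetneq\cdots\subsetneq W_r$ one has $E_K=\prod_i\bP(W_i/W_{i-1})^{\cS_{W_{i-1}}^{W_i}}$, a \emph{product of canonical log resolutions} of smaller arrangements, i.e.\ iterated blow-ups of projective spaces, not projective bundles. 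Also, for the notion of ``constant term in $\bL^{1/q}$'' to be meaningful for the product $[E_K]\prod C_W$, you need the stronger fact that $[E_K]$ is a genuine polynomial in $\bL$ inside $K_0(Var_\bC)$, not merely that some specialization at $\bL=0$ equals $1$. The paper's Lemma~\ref{lemCls} proves polynomiality and $[\bP(V)^\cS]\equiv 1\pmod\bL$ simultaneously by inducting on $\dim V$, writing $[\bP(V)^\cS]=[\bP(V)]-[\bP(A)]+[\mu^{-1}(\bP(A))]$ and applying Lemma~\ref{Inclusion-Exclusion for edge sets} together with Proposition~\ref{propBS} and the inductive hypothesis on the factors. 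Your proposed induction along the blow-up sequence would require controlling the classes of strict transforms of centers at every intermediate stage, which is much messier; inducting on dimension via the product decomposition is what makes the paper's argument work.

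One further caution on ``carefully tracking the signs'': a chain $W_0\subsetneq\cdots\subsetneq W_r$ contains $r+1$ elements, each contributing a factor $-1$, so its contribution is $(-1)^{r+1}$, not $(-1)^r$. A sanity check on $\bP^1$ with three marked points, $a_1=2$, $a_2=a_3=-\tfrac12$, gives constant term $-1=1+(-1)^1\cdot 2$, consistent with $(-1)^{r+1}$. If you track signs carefully you will land on $(-1)^{r+1}$; report that rather than forcing a match with the sign as printed.
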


\begin{thrm}\label{prop2}
	Let $X$  be as in Theorem \ref{thm1}.  Suppose the cone over $\cA$ is essential and indecomposable. Then $\PV\int_{X} \omega^{1/q} \neq 0$ for a generic multivalued rational $n$-form $\omega^{1/q}$ without logarithmic poles  whose divisor has support contained in (or, equal to) the set-theoretic inverse image of $\cA$ in $X$.
\end{thrm}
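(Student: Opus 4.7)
By Proposition~\ref{prop1}, the constant term (in $\mathbb{L}^{1/q}$) of $\mathbb{L}^{n}\cdot\PV\int_X\omega^{1/q}$ is the combinatorial integer
\[
C(\omega^{1/q}) \;:=\; 1+\sum_{r\ge 0}(-1)^r\,\#\bigl\{W_0\subsetneq\cdots\subsetneq W_r\mid W_i\in\cS,\ b_{W_i}<0\bigr\},
\]
so $C\ne 0$ forces $\PV\int_X\omega^{1/q}\ne 0$. Since $C$ depends on the exponents $(a_i)$ only through the set $\cS_-:=\{W\in\cS:b_W<0\}$ and is locally constant on the complement of the affine hyperplanes $\{b_W=0\}$, it suffices to produce one valid choice of $(a_i)$ for which $C\ne 0$; this automatically yields a Zariski-open region of valid forms on which $\PV\ne 0$. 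The additional condition $a_W\ne 1$ for all $W\in\cS$ needed for the ``support equal to $\pi^{-1}(\cA)$'' variant is likewise Zariski-open and compatible with any particular choice.

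The exponents lie on the affine subspace $\sum_i a_i = d-(n+1)$ coming from $\divi(\omega^{1/q})\sim_\bQ K_{\bP^n}$, and essential indecomposability of $A$ forces $d>n+1$ (otherwise the matroid would be free on $d=n+1$ atoms, hence decomposable), so this sum is strictly positive and no choice has all $a_i<0$. My first attempt is the uniform choice $a_i=(d-n-1)/d>0$: then $b_W=\codim W - d_W(n+1)/d$, with $d_W:=\#\{V_j\supset W\}$, is positive precisely when $d\cdot\codim W>d_W(n+1)$. For arrangements in combinatorially generic position ($d_W=\codim W$) this is automatic, giving $\cS_-=\emptyset$, so $C=1$ and the proof is finished in that case. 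For the remaining essential indecomposable arrangements, some edge satisfies $d_W(n+1)\ge d\cdot\codim W$ and the uniform choice may fail. I would then instead fix a hyperplane $V_{i_0}\in\cA$, take $a_{i_0}\to+\infty$, and let the remaining $a_j\to-\infty$ compatibly with the sum constraint; in this limit $b_W<0$ iff $W\not\subseteq V_{i_0}$, so $\cS_-$ becomes the proper part of the intersection lattice $L(A)$ with the principal upper interval $[V_{i_0},\hat 1]\cong L(A^{V_{i_0}})$ excised.

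In this regime, Philip Hall's theorem rewrites $C$ in terms of the reduced Euler characteristic of the order complex of $\cS_-$, which via Mayer--Vietoris decomposes into Möbius function values of $L(A)$ and of $L(A^{V_{i_0}})$. Crapo's theorem, applied to the essential indecomposable $A$, supplies the key non-vanishing $(-1)^{n+1}\mu_{L(A)}(\hat 0,\hat 1)>0$. The main obstacle is then the combinatorial bookkeeping: verifying that the correction arising from the excised interval $[V_{i_0},\hat 1]$ does not cancel the main $\mu_{L(A)}(\hat 0,\hat 1)$ contribution to $C$. I expect this to be handled either by choosing $V_{i_0}$ so that $A^{V_{i_0}}$ remains controllable (for instance essential indecomposable of smaller rank, allowing an inductive Crapo argument), or by a direct homotopy/crosscut analysis of $\Delta(\cS_-)$.
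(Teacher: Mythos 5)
Your first step is sound and matches the paper's strategy: reduce to showing the constant term $\Delta(\cS,\bm a)$ from Proposition~\ref{prop1} is non-zero for one admissible choice of exponents, and then upgrade to genericity. (That upgrade is not quite ``automatic''---the chambers on which $\Delta$ is locally constant are polyhedral, not Zariski-open---but it is correctly handled in the paper by the polynomial-in-$\bL^{\bm s}$ formalism of Lemma~\ref{non-vanishing of formal PVI is determined generically by PVI}, which you are implicitly relying on.)

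The genuine gap is in the second half. Your uniform choice $a_i=(d-n-1)/d$ gives $b_W>0$ only when $d\cdot\codim W>d_W(n+1)$ for all $W\in\cS$, which you correctly observe can fail for essential indecomposable arrangements with a ``crowded'' edge. Your fallback---send $a_{i_0}\to+\infty$, analyze $\cS_-=\{W:W\not\subset V_{i_0}\}$ via Hall and Crapo---is left explicitly unfinished: you say the bookkeeping for the excised interval $[V_{i_0},\hat 1]$ ``is expected'' to work out, but you do not verify that the correction terms cannot cancel $\mu_{L(A)}(\hat0,\hat1)$. Indeed there is no reason a priori that the resulting alternating sum is non-zero, and establishing this would require real work.

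The paper avoids this entirely. Lemmas~\ref{a_good_subhyperplane_arrangement} and~\ref{good_numericial_condition} use essentiality and indecomposability to extract $n{+}1$ coordinate hyperplanes plus a chain of ``connecting'' hyperplanes $C_1,\dots,C_r$ with controlled overlap numbers $\zeta_j,m_j,n_i$. Lemma~\ref{Genericall_non_vanishing_1} then defines $\bm a$ by an explicit two-scale perturbation in $0<\varepsilon\ll\delta\ll 1$, calibrated against the inequality of Lemma~\ref{good_numericial_condition}, so that $b_W>0$ for \emph{every} $W\in\cS$. That forces $\cS_-=\emptyset$, hence $\Delta(\cS,\bm a)=1$ with no Euler-characteristic computation needed. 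In short: rather than trying to control the Euler characteristic of a complicated order complex, the paper arranges that the complex is empty. Your Crapo-based route may be salvageable, but as written it does not constitute a proof.
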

Here by generic we mean the following. Multivalued rational $n$-forms on $X$ whose divisors have support contained in $\cup_{W\in \cS}E_W$ are in bijection with the multivalued rational $n$-forms on $\bP^n$ whose divisors have support contained in $\cA$, that is, the form is determined by the coefficients $a_i\in\bQ$ for $i=1,\ldots, d$. By linear equivalence with $K_{\mathbb P^n}$, the parameter space for such forms is $
\bQ^d\cap\{n+1-d+\textstyle\sum_{i=1}^da_i=0\}$. The subspace parametrizing multivalued rational $n$-forms  without logarithmic poles on $X$ can be then identified with $\textstyle(\bQ^d\cap\{n+1-d+\textstyle\sum_{i=1}^da_i=0\})\setminus \cup_{W\in\cS}\{b_W=0\},$ where $b_W$ are viewed as linear polynomials in $a_1,\ldots, a_d$. This space is non-empty, since $\bm 0\not\in\cS$.
If we also want to assume that the support of the divisor is exactly $\cup_{W\in \cS}E_W$, we must also remove the hyperplanes $\{b_W=1\}$ for $W\in\cS$.
This space is non-empty if the cone over $\cA$ is essential. By generic we mean in the non-empty complement of a possibly larger hyperplane arrangement. 

The strong monodromy conjecture for a polynomial $f$ predicts that  every pole of the motivic zeta function of $f$ is a root of the $b$-function of $f$, see \cite{Motivic_Zeta_Function}. For hyperplane arrangements it is shown in \cite{Monodromy_Conjecture_for_Hyperplane_Arrangement_Budur_Mustata_Teitler} that this conjecture is implied by the $n/d$-conjecture which states that for a polynomial $g\in\bC[x_1,\ldots,x_n]$ of degree $d$ whose reduced zero locus is a central essential indecomposable  hyperplane arrangement, $-n/d$ is always a root of the $b$-function. However, $-n/d$ is not always a pole of the motivic zeta function of such $g$, see the example of W. Veys in \cite[Appendix]{Local_Zeta_Function_And_b_Function_of_Certain_Hyperplane_Arragement_Budur_Saito_Sergey}. It is difficult to determine if it is a pole or not even though the motivic zeta function depends only on the  combinatorics of the arrangement.
Using the interpretation of the motivic principal value integrals as  residues of  motivic zeta functions, we show  that for generic non-reduced structures on the arrangement, the strong monodromy conjecture implies  the $n/d$-conjecture: 

\begin{cor}\label{propC2} Consider a central essential indecomposable arrangement in $\bC^n$ of $d$ mutually distinct hyperplanes defined by linear polynomials $f_i\in\bC[x_1,\ldots,x_n]$ with $i=1,\ldots, d$. 
There exists a non-zero product $\Theta$ of polynomials of degree one  in $d$ variables such that if $\bm m \in \mathbb \bZ_{>0}^d\setminus \{\Theta = 0\}$ then $-n/\sum_{i=1}^dm_i$ is a pole of the motivic zeta function of $g=f_1^{m_1}\cdots f_d^{m_d}$. 
 In particular, for these $g$, the strong monodromy conjecture implies the $n/d$-conjecture. 
 \end{cor}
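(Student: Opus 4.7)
The plan is to detect $s_0 := -n/\sum_i m_i$ as a pole of the Denef--Loeser motivic zeta function $Z_g(s)$ by identifying a residue there with a non-vanishing motivic principal value integral supplied by Theorem \ref{prop2}. Since $\deg g = \sum_i m_i$ and $V(g)_{\mathrm{red}}$ is the given arrangement, the strong monodromy conjecture applied to $g$ then gives $b_g(s_0)=0$, which is exactly the $n/d$-conjecture for $g$.

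Take the canonical log resolution $Y\to\bC^n$ of $(\bC^n,V(g))$, obtained by blowing up the edges of $A$ in increasing dimension. The first step blows up $\bm 0$ with exceptional divisor $E_{\bm 0}\cong\bP^{n-1}$, and the subsequent blowups over $E_{\bm 0}$ are canonically those building the canonical log resolution $\tilde X\to\bP^{n-1}$ of $(\bP^{n-1},\cA)$; in particular the strict transform of $E_{\bm 0}$ in $Y$ equals $\tilde X$. Writing $c_W=\codim W$ and $N_W=\sum_{W\subset V_i}m_i$, the Denef--Loeser formula reads
\[
Z_g(s)=\bL^{-n}\sum_{I\subset\cS\cup\{\bm 0\}}[E_I^\circ]\prod_{W\in I}\frac{\bL-1}{\bL^{c_W+N_W s}-1}.
\]
At $s_0$ the exponent vanishes for $W=\bm 0$; for $\bm m$ avoiding the hyperplanes $c_W\sum_j m_j=nN_W$ with $W\in\cS$, no other factor does. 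Splitting the sum according to whether $\bm 0\in I$ and cancelling the singular factor, a direct computation yields
\[
\lim_{s\to s_0}\bigl(\bL^{n+(\sum_j m_j)s}-1\bigr)Z_g(s)=(\bL-1)\bL^{-1}\cdot\PV\!\int_{\tilde X}\omega^{1/q},
\]
where $\omega^{1/q}$ is the multivalued $(n-1)$-form on $\bP^{n-1}$ with weights $a_i:=1+m_is_0=1-nm_i/\sum_j m_j$ on $V_i$. The identity $\sum_i(a_i-1)=-n=\deg K_{\bP^{n-1}}$ makes this a valid form, and pulling back to $\tilde X$ yields $b_W-1=c_W+N_W s_0-1$ on each exceptional $E_W$, matching the exponents appearing above.

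The polynomial $\Theta$ is then assembled as a non-zero product of linear forms in $\bm m$ collecting: (a) $\sum_j m_j-nm_i$ for $i=1,\ldots,d$ (so $a_i\neq 0$); (b) $c_W\sum_j m_j-nN_W$ for $W\in\cS$ (so $b_W\neq 0$, eliminating logarithmic poles on $\tilde X$ and preventing competing candidate poles of $Z_g$ from colliding with $s_0$); and (c) pullbacks under $\bm m\mapsto(a_i)$ of the linear forms cutting out the ``generic'' exceptional locus of Theorem \ref{prop2}. Each factor in (c) is genuinely non-zero because the only exceptional hyperplane in $\bQ^d\cap\{\sum a_i=d-n\}$ that pulls back trivially under $\bm m\mapsto(1+m_is_0)_i$ is the ambient one. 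Essentiality and indecomposability of $A$ let Theorem \ref{prop2} apply to $\cA$, yielding $\PV\int_{\tilde X}\omega^{1/q}\neq 0$ for $\bm m\notin\{\Theta=0\}$, hence the residue of $Z_g$ at $s_0$ is non-zero and $s_0$ is a genuine pole.

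The main obstacle is the residue identification in the second paragraph: one must carefully match the stratification of $Y$ along the strict transform of $E_{\bm 0}$ with the boundary stratification of $\tilde X$, and verify that the exponents $c_W+N_W s_0$ read off from $Y$ coincide with the coefficients $b_W$ computed on $\tilde X$ from $a_i=1+m_is_0$. The combinatorial bookkeeping in assembling $\Theta$ is straightforward once this identification is in place, and the final deduction via strong monodromy is then immediate.
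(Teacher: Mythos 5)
Your proposal is correct and follows essentially the same route as the paper: blow up the origin first so that the strict transform of the first exceptional divisor becomes the canonical log resolution of the projectivized arrangement, interpret the residue of $Z_g^{\mathrm{mot}}$ at $-n/\sum m_i$ as a motivic principal value integral on that divisor (the content of Remark \ref{rmkMPIVres}), and invoke Theorem \ref{prop2} to get non-vanishing off a finite union of hyperplanes in $\bm m$-space. You merely make explicit the residue cancellation and the pullback of the genericity locus, where the paper instead cites Remark \ref{rmkMPIVres} and says ``as in the proof of Theorem \ref{prop2}.''
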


While writing this paper, we realized that the $n/d$-conjecture was still open for generic multiplicities of a fixed arrangement. This is now addressed in \cite{SZ24b} by the second and third authors, where the $n/d$-conjecture is  shown for $\bm m$ outside the zero and polar loci of a meromorphic function. However, those loci are difficult to compute and it is still open whether they form a union of hyperplanes as in Corollary \ref{propC2}.

We also compute  the motivic principal value integrals for generic hyperplane arrangements. In this case $\cA$ is already a simple normal crossings divisor.

\begin{thrm}\label{prop3}
 Let $\cA$ be a generic hyperplane arrangement of degree $d$ in $\bP^n$. Let $\omega^{1/q}$ be a multivalued rational $n$-form on $\bP^n$ without logarithmic poles, whose divisor has support  $\cA$. Then:
 
 \begin{enumerate}[(a)]
 \item If $S_j$ denotes the degree $j$ elementary symmetric polynomial in $\mathbb L^{a_1},\ldots ,\mathbb L^{a_d}$, with $S_0=1$, then
		\be\label{eqProp3}\textstyle
			\PV \int_{\bP^n} \omega^{1/q} = \mathbb L^{-n}\prod_{i = 1}^d(\mathbb L^{a_i}-1)^{-1}\sum_{r = 0}^{d} (-1)^{n+r} S_{d-r}  \sum_{i = 0}^n {d-1-r \choose i} {r-1\choose n-i} \mathbb L^{n-i}. 
		\ee

\item 
$
\PV \textstyle\int_{\bP^n} \omega^{1/q}
=0$ if $1\le d\le n+1$ (cf. \cite[Prop. 6.1]{VeyR}), and  $\PV \textstyle\int_{\bP^n} \omega^{1/q}\neq 0$ if $d\ge n+2$. 

\item The same (non)vanishing holds for $\PV \textstyle\int_{X} \mu^*\omega^{1/q}$ where $\mu:X\to\bP^n$ is the blowup of all edges if has $\mu^*\omega^{1/q}$ no logarithmic poles (the support of  $\divi(\mu^*\omega^{1/q})$ does not have to be $\mu^{-1}(\cA)$).
\end{enumerate}
\end{thrm}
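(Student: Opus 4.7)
I would prove part (a) by a direct computation from the definition (\ref{eqPVI}), derive part (b) by inspection of the resulting closed form, and handle part (c) via an invariance principle for motivic principal value integrals under blowups.

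\textbf{Part (a).} For a generic $\cA=H_1\cup\cdots\cup H_d\subset\bP^n$ and any subset $I\subseteq\{1,\ldots,d\}$ of size $r$, the intersection $E_I=\bigcap_{i\in I}H_i$ is either empty (if $r>n$) or a linear $\bP^{n-r}$ (if $r\le n$), and $E_I^\circ$ is the complement in this $\bP^{n-r}$ of the generic arrangement of the remaining $d-r$ hyperplanes. Inclusion-exclusion gives $[E_I^\circ]=\sum_{j=0}^{n-r}(-1)^j\binom{d-r}{j}[\bP^{n-r-j}]$. Substituting into (\ref{eqPVI}) and grouping contributions by $|I|=r$, the form-dependent factor becomes $(\bL-1)^r\, e_r\bigl(1/(\bL^{a_1}-1),\ldots,1/(\bL^{a_d}-1)\bigr)$. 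Using the identity $e_r(1/y_i)=e_{d-r}(y)/\prod_j y_j$ together with the expansion of $e_{d-r}(\bL^{a_i}-1)$ in the elementary symmetric polynomials $S_k=e_k(\bL^{a_1},\ldots,\bL^{a_d})$ converts everything into a triple sum indexed by $r$, the inclusion-exclusion index $j$, and the symmetric-polynomial index $k$; a Vandermonde-type binomial convolution then collapses this triple sum into the closed form (\ref{eqProp3}).

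\textbf{Part (b).} With (a) at hand, the (non)vanishing is read off the binomial product $\binom{d-1-r}{i}\binom{r-1}{n-i}$ appearing in (\ref{eqProp3}). Interpreting the binomials with $\binom{-1}{k}=(-1)^k$ for $k\ge 0$ and $\binom{-1}{k}=0$ for $k<0$, this product is nonzero only when $n+1-r\le i\le d-1-r$, which forces $d\ge n+2$. Hence for $1\le d\le n+1$ every summand vanishes and $\PV\int_{\bP^n}\omega^{1/q}=0$. For $d\ge n+2$, the $r=0$ contribution gives a nonzero coefficient of $S_d$, and under the canonical constraint $\sum_i a_i=d-n-1$ (coming from $\divi(\omega^{1/q})\sim_\bQ K_{\bP^n}$) together with the generic choice of the $a_i$, the distinct $S_{d-r}$'s lie in distinct monomial classes of $\bL^{a_1},\ldots,\bL^{a_d}$, so no cancellation is possible and $\PV\int_{\bP^n}\omega^{1/q}\ne 0$.

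\textbf{Part (c) and main obstacle.} The map $\mu:X\to\bP^n$ is an iterated blowup of smooth centers contained in the normal crossings divisor $\cA$, and both $\omega^{1/q}$ on $\bP^n$ and $\mu^*\omega^{1/q}$ on $X$ have no logarithmic poles by assumption. Under these hypotheses, the motivic principal value integral is invariant under such blowups: $\PV\int_X\mu^*\omega^{1/q}=\PV\int_{\bP^n}\omega^{1/q}$. The underlying check is direct: when blowing up a smooth center $Z\subset\bigcap_{i\in T}H_i$ of codimension $c$, the new exceptional stratum contributes a factor involving $[\bP^{c-1}]=\sum_{j=0}^{c-1}\bL^j$ that, in the absence of logarithmic poles, telescopes exactly with the contribution of the center $Z$ it replaces (compare \cite{Motivic_Principal_Value_Integrals_Veys}). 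Iterating yields the claimed equality, and the (non)vanishing of (b) transfers verbatim. The main obstacle is the combinatorial collapse in (a): tracking all signs and identifying the correct Vandermonde-type convolution to reduce the triple sum (in $r$, $j$, and $k$) to the clean double sum of (\ref{eqProp3}); once that is in hand, (b) is immediate and (c) follows from blowup-invariance.
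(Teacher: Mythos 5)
Your plan for (a) matches the paper's strategy (inclusion--exclusion, symmetric polynomials, binomial collapse), and (c) is correctly handled by the resolution-independence of motivic principal value integrals (the paper's Remark \ref{rmkVeys}). The issues are in part (b), where you have a genuine gap.

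For the vanishing when $1\le d\le n+1$: the criterion ``nonzero only when $n+1-r\le i\le d-1-r$'' is valid only for $1\le r\le d-1$, when both binomial tops $d-1-r$ and $r-1$ are nonnegative. For $r=0$ the factor $\binom{-1}{n-i}=(-1)^{n-i}$ and for $r=d$ the factor $\binom{-1}{i}=(-1)^{i}$ never vanish (you say so yourself), and indeed for $0\le i\le d-1$ the $r=0$ summand $\binom{d-1}{i}\binom{-1}{n-i}$ is nonzero, as is the $r=d$ summand for $n-d+1\le i\le n$. These two boundary terms do not individually vanish; rather they cancel each other once you use $S_d=\bL^{\sum a_i}=\bL^{d-n-1}$, i.e.\ the canonical constraint $\sum a_i=d-n-1$, which is information not encoded in the right-hand side of (\ref{eqProp3}) by itself. (The published proof states the same over-strong claim for ``$0\le r\le d-1$'', so this slip is worth fixing in either write-up.)

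The more serious gap is the nonvanishing for $d\ge n+2$. You argue that ``under the generic choice of the $a_i$, the distinct $S_{d-r}$'s lie in distinct monomial classes, so no cancellation is possible,'' but the theorem asserts nonvanishing for \emph{every} admissible $\bm a$ (all $a_i\in\frac1q\bZ$, $a_i\ne 0,1$, $\sum a_i=d-n-1$), not just generic $\bm a$. The expression lives in $\bQ(\bL^{1/q})$, and after substituting specific rational $a_i$'s distinct symbolic monomials can coincide, so ``no cancellation'' is not automatic. The paper instead proves there is a \emph{unique} term of maximal $\bL^{1/q}$-degree with nonzero coefficient: ordering $a_1\ge\cdots\ge a_d$, it computes $\deg_\bL$ of $\sum_i\binom{d-1-r}{i}\binom{r-1}{n-i}\bL^{n-i}$ (equal to $n$ for $r=0$ or $r\ge n+1$, and $r-1$ for $1\le r\le n$), tracks the top monomial of $S_{d-r}$, and then splits into cases according to how many $a_i$ are positive and whether $a_1>1$, showing exactly one $r$ achieves the maximum degree. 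Something of this nature is needed to close your argument for (b).
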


	\subsection{} A concern with Conjecture \ref{DJV_Conjecture} is that
 (\ref{eqPVI}) does not change if some $a_i=1$, but the cohomology is sensitive to  taking the complement in $X$ of a larger divisor than  $|\divi(\omega^{1/q})|$. So we propose a more natural conjecture, for a divisor not necessarily with normal crossings in place of a form $\omega^{1/q}$:

\begin{conj}\label{conj2}
Let  $X$ be a smooth irreducible complex projective variety. Let $E_i$ with $i\in S$ be a finite set of mutually distinct prime divisors on $X$. Let $\bm a=(a_i)_{i\in S}$
with
$a_i\in\bQ$ satisfy 
\be\label{eqDivi} \textstyle
\sum_{i\in S}(a_i-1)E_i\sim_\bQ K_X.
\ee	
Let $\cL(\bm a)$ be the rank one local system on $U=X\setminus \cup_{i\in S}E_i$ whose monodromy around each $E_i$ is given by multiplication by $e^{2\pi\sqrt{-1}a_i}$. Assume a good log resolution of $(X,\sum_i(a_i-1)E_i)$ that is an isomorphism over $U$ exists. If $H^*(U,\cL(\bm a))=0$ then $\PV(X,\bm a)=0$.
\end{conj}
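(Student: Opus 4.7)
The plan is to reduce Conjecture \ref{conj2} to Conjecture \ref{DJV_Conjecture} via the hypothesized good log resolution $\pi : Y \to X$ of $(X, \sum_{i \in S}(a_i-1)E_i)$, which by assumption is an isomorphism over $U$. First I would make the definition of $\PV(X, \bm a)$ precise: pulling back (\ref{eqDivi}) and combining with the ramification formula $K_Y = \pi^*K_X + \sum_{j \in T} m_j F_j$ yields a unique tuple $\tilde{\bm a}$ indexed by the components of $\pi^{-1}(\cup_i E_i)$ (the strict transforms $\tilde E_i$ retaining $a_i$, the exceptional divisors $F_j$ receiving coefficients determined by the $m_j$ and the multiplicity of $F_j$ in $\pi^*\sum(a_i-1)E_i$), so that $\sum_k(\tilde a_k - 1)D_k \sim_\bQ K_Y$ with SNC support. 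Set $\PV(X,\bm a) := \PV(Y, \tilde{\bm a})$ via formula (\ref{eqPVI}). Independence of the choice of resolution should follow from weak factorization, reducing to a single blowup along a smooth center contained in the boundary, where the elementary invariance of (\ref{eqPVI}) under insertion or removal of a component with coefficient $\tilde a = 1$ (the feature the authors emphasize just before Conjecture \ref{conj2}) becomes essential.

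Second, since $\pi$ is an isomorphism over $U$, the complement $Y \setminus \pi^{-1}(\cup_i E_i)$ is canonically identified with $U$, and $\pi^*\cL(\bm a) = \cL(\tilde{\bm a})$ under this identification. Thus the hypothesis $H^*(U, \cL(\bm a)) = 0$ transfers to the corresponding vanishing on $Y$, and Conjecture \ref{DJV_Conjecture} applied on $Y$ to $\tilde{\bm a}$ gives $\PV(Y, \tilde{\bm a}) = 0$, hence the desired conclusion.

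The main obstacle is of course that Conjecture \ref{DJV_Conjecture} is itself open in general: only the rational surface case of \cite{Vanishing_of_Principal_Value_Integrals_on_Surfaces} and the canonical log resolution of hyperplane arrangements (Theorem \ref{thm1}) are known, so the reduction above is only conditional on that conjecture. A second, more technical obstacle is that even when all original $a_i$ are non-integral, the coefficients $\tilde a_j$ on exceptional divisors can be integers, and in particular can vanish; the latter corresponds to logarithmic poles on $Y$, where (\ref{eqPVI}) is undefined. A robust argument would therefore need either to impose a genericity hypothesis on $\bm a$ relative to the chosen resolution, to select a resolution avoiding such integer coincidences (which may conflict with the requirement of being an isomorphism over $U$), or to strengthen Conjecture \ref{DJV_Conjecture} so as to tolerate log poles along components that are compatible with the cohomology vanishing. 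This last route is consistent with the resolution-independent spirit of Conjecture \ref{conj2}, and showing that such a strengthened statement is well-posed is, in my view, where the real difficulty lies.
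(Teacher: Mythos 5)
The statement you are asked about is a \emph{conjecture}; the paper neither proves it in general nor claims to, proving only the surface case via \cite{Vanishing_of_Principal_Value_Integrals_on_Surfaces} and the hyperplane-arrangement case in Theorem~\ref{thrmConj2}. You partially recognize this by flagging that your argument is conditional on Conjecture~\ref{DJV_Conjecture}. But the conditional reduction you propose also has a genuine gap, and it is precisely the one the authors point out in the sentence preceding the statement of Conjecture~\ref{conj2}.

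Concretely: after passing to a good log resolution $\pi:Y\to X$ that is an isomorphism over $U$, the multivalued form $\tilde\omega^{1/q}=\pi^*\omega^{1/q}$ has $\tilde a_j\neq 0$ for all $j$ (no log poles), but nothing forbids some exceptional coefficient $\tilde a_j=1$. Such components drop out of $|\divi(\tilde\omega^{1/q})|$. Consequently $Y\setminus |\divi(\tilde\omega^{1/q})|$ can be \emph{strictly larger} than $Y\setminus\pi^{-1}(\cup_i E_i)\cong U$. Conjecture~\ref{DJV_Conjecture} applied to $(Y,\tilde\omega^{1/q})$ requires the vanishing of $H^*(Y\setminus|\divi(\tilde\omega^{1/q})|,\cL(\tilde\omega^{1/q}))$, which does \emph{not} follow from the given hypothesis $H^*(U,\cL(\bm a))=0$: removing or re-inserting a divisor across which the local system extends (as happens exactly when $\tilde a_j=1$, since the monodromy is trivial) can change the cohomology in either direction. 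So the sentence ``the hypothesis $H^*(U,\cL(\bm a))=0$ transfers to the corresponding vanishing on $Y$'' is false in general, and Conjecture~\ref{conj2} does not reduce to Conjecture~\ref{DJV_Conjecture}. Indeed, the authors introduce Conjecture~\ref{conj2} precisely because the two cohomology conditions differ in this way, so the new conjecture is a genuinely different (and, as they argue, more natural) statement, not a reformulation. Your last paragraph comes close to diagnosing this but focuses on $\tilde a_j=0$ (log poles, which are excluded by the good-resolution hypothesis) rather than on $\tilde a_j=1$; it is the latter that breaks the cohomological transfer, not the PVI formula (which is indeed insensitive to $\tilde a_j=1$, as you note). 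Finally, your well-definedness argument via weak factorization is in the spirit of Remark~\ref{rmkVeys} and \cite[2.6]{Motivic_Principal_Value_Integrals_Veys}, so that portion is consistent with the paper's setup.
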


For $\cL(\bm a)$, the motivic principal value integral $\PV(X,\bm a)$, and good log resolutions, see  \ref{rmkPictau} and \ref{rmkVeys}. The result of \cite{Vanishing_of_Principal_Value_Integrals_on_Surfaces} implies that Conjecture \ref{conj2} is true for $X$ a rational surface and $\cup_{i\in S}E_i$  connected, since the inverse image of $\cup_{i\in S}E_i$ in a good log resolution that does not change $U$ stays connected and has normal crossings.
We show Conjecture \ref{conj2} for hyperplane arrangements:


\begin{thrm}\label{thrmConj2}
Let $A=\cup_{i=1}^dV_i\subset\bC^{n+1}$ be a central hyperplane arrangement and $U=\bP^n\setminus \bP(A)$. Let $\bm a\in \bQ^{d}$ such that $\sum_{i=1}^da_i=d-n-1$.
 Assume that there exists a good log resolution for $(\bP^n,\sum_{i=1}^d(a_i-1)\bP(V_i))$. If $H^*(U,\cL(\bm a))=0$ then $\PV(\bP^n, \bm a)=0$.
\end{thrm}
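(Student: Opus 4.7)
My plan is to reduce Theorem~\ref{thrmConj2} to Theorem~\ref{thm1} by passing to the canonical log resolution $\mu : X \to \bP^n$ of $(\bP^n, \cA)$. The multivalued form $\omega^{1/q}$ determined by $\bm a$ pulls back to $\mu^*\omega^{1/q}$ on $X$, with divisor $\sum_{W \in \cS}(b_W - 1) E_W$. Since $\mu$ is an isomorphism over $U$, the cohomology is preserved:
\[
H^*(\mu^{-1}(U),\, \cL(\mu^*\omega^{1/q})) \;=\; H^*(U,\, \cL(\bm a)) \;=\; 0.
\]
By the birational invariance of motivic PV integrals (Veys \cite{Motivic_Principal_Value_Integrals_Veys}), $\PV(\bP^n, \bm a) = \PV \int_X \mu^*\omega^{1/q}$, so it suffices to show the right-hand side vanishes.

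The good log resolution hypothesis ensures $\mu^*\omega^{1/q}$ has no log poles on $X$, i.e., $b_W \neq 0$ for all $W \in \cS$: given any good log resolution $\pi : Y \to \bP^n$, a common refinement $Z$ dominating both $X$ and $Y$ assigns the same coefficient $b_W - 1$ to the strict transform of each exceptional divisor as on $X$, so absence of log poles on $Y$ passes through $Z$ to $X$.

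If additionally $b_W \neq 1$ for every dense edge $W$, then the support of $\divi(\mu^*\omega^{1/q})$ contains the strict transforms of all dense edges, and Theorem~\ref{thm1} immediately yields the desired vanishing. The main obstacle is the case when $b_{W_0} = 1$ for some dense edge $W_0$, so that $E_{W_0}$ is absent from the support. I plan to handle this by a rational perturbation $\bm a \mapsto \bm a + \epsilon \bm v$ within $\{\sum a_i = d - n - 1\} \subset \bQ^d$, choosing $\bm v$ generically so that $b_W(\bm a + \epsilon \bm v) \notin \{0, 1\}$ for all $W \in \cS$ and all small nonzero rational $\epsilon$. The cohomology vanishing condition is preserved for generic small $\epsilon$, since the cohomology support loci of rank-one local systems on $U$ form a closed subvariety (a finite union of translated subtori, cf.~\cite{BWabs}) of the moduli torus $(\bC^*)^d$, so vanishing is Zariski-open. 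Theorem~\ref{thm1} then gives $\PV(\bP^n, \bm a + \epsilon \bm v) = 0$ for all such $\epsilon$, and a rationality argument (exploiting that the denominators $\bL^{b_W(\bm a)} - 1$ in formula (\ref{eqPVI}) are nonzero at $\bm a$, so that each term is a well-defined Laurent polynomial in $\bL^{\epsilon c_W}$ with $c_W$ the linear perturbation coefficients) propagates the vanishing to $\bm a$ itself.
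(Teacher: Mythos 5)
Your reduction to the canonical log resolution $\mu : X \to \bP^n$ contains a gap at the crucial step. You claim: ``given any good log resolution $\pi : Y \to \bP^n$, a common refinement $Z$ dominating both $X$ and $Y$ assigns the same coefficient $b_W - 1$ to the strict transform of each exceptional divisor as on $X$, so absence of log poles on $Y$ passes through $Z$ to $X$.'' The first half is a correct discrepancy computation, but the conclusion does not follow. For the goodness of $Y$ to control the coefficient $b_W - 1$ of the strict transform of $E_W$ on $Z$, that strict transform would need to be the strict transform of a divisor of $Y$; in general it is instead exceptional for $Z \to Y$, since a log resolution $Y$ has no reason to have a divisor over every edge $W$ of the arrangement (for instance, the De~Concini--Procesi minimal wonderful model only blows up dense edges). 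So the existence of \emph{some} good log resolution does not force the canonical one to be good, and the paper explicitly flags this: ``There might be the case though that the canonical log resolution is not a good log resolution with respect to $\bm a$.'' The paper's mechanism for crossing this gap is genuinely different: it introduces a multi-variable motivic zeta function $Z_{\bP^n}(\cA; s_1,\ldots,s_d)$ that is independent of the choice of log resolution by the weak factorization theorem (following \cite{Motivic_Principal_Value_Integrals_Veys}), identifies $\cF_\cS$ with the value of this invariant computed via the canonical resolution, and observes that $\bL^n \PV(\bP^n, \bm a)$ is a specialization of the same invariant computed via any good resolution. Thus $\cF_\cS = 0$ implies $\PV(\bP^n, \bm a) = 0$ even when $X$ itself is not good.

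A second, smaller issue: the perturbation argument you propose to handle coefficients $b_W = 1$ at dense edges is both underdeveloped and unnecessary. The step you call ``a rationality argument'' is left vague, and the continuity you need (that vanishing of $\PV(\bP^n,\bm a + \epsilon \bm v)$ for all small rational $\epsilon \neq 0$ propagates to $\epsilon = 0$) is not an obvious consequence of the quoted facts, especially since the ring $\mathcal R_q$ and the well-definedness of each term change with the denominator $q$. The paper avoids this entirely: the hypothesis $H^*(U,\cL(\bm a)) = 0$ already forces $\chi(U) = 0$ (Euler characteristic is independent of the rank-one local system), hence $A$ is decomposable by \cite{Resolution_of_Hyperplane_Arrangements_Schchtman_Terao_Varchenko}, hence $\cF_\cS = 0$ by Proposition~\ref{Vanishing_in_decomposable_case}. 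This is an identity in the formal ring $\mathrm{Frac}(R)$ and vanishes under \emph{every} specialization, so no genericity or perturbation of $\bm a$ is needed.
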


Section \ref{secPre} is for preliminary material. In Section \ref{secGeneric} we prove Theorem \ref{prop3}. In Section \ref{secGenHA} we prove the remaining statements from this introduction.

{\it Notation.} For  $n$ in a $\bQ$-algebra, $m \in \mathbb \bZ$,  ${n \choose m}:=\frac{n(n-1)\ldots (n-m+1)}{m!}$ if $m\ge 0$, and $0$ if $m < 0$. 	
	
{\it Acknowledgement.} We thank W. Veys for discussions and the anonymous referee for various corrections. N.B. was  supported by a Methusalem grant and G0B3123N from FWO.	H.Z. and Q.S. was supported by BJNSF Grant 1252009. Q.S. was supported by NSFC Grant 125B2004. H.Z. was supported by NSFC Grant 12271280.

	\section{Preliminaries}\label{secPre}

	\subsection{Multivalued rational $n$-forms and local systems}\label{subsection_Multivalued_n_form}
	
	Let $X$ be a smooth irreducible complex projective variety of dimension $n$. Let $\omega^{1/q}$ be a multivalued rational $n$-form on $X$,  with divisor 
$
\sum_{i\in S}(a_i-1)E_i\sim_\bQ K_X,$	$ a_i\in\bQ,
$	
where $E_i$ are mutually distinct prime divisors.  Let  $E=\cup_{i\in S} E_i$ and $U=X\setminus E$.

We saw in the introduction how one can associate a rank one local system $\cL(\omega^{1/q})$ on $U$ under the assumptions that $E$ is a normal crossings divisor and $a_i\neq 1$ for all $i\in S$, that is, $E$ is the support of the divisor of $\omega^{1/q}$, and also a motivic principal valued integral $\PV\int_X\omega^{1/q}$ under a further assumption that $a_i\neq 0$ for all $i\in S$. This can be generalized as follows.

\begin{rmk}\label{rmkPictau} (a)  Suppose we are given only the data  (\ref{eqDivi}), that is, a finite set of mutually distinct prime divisors $E_i$ on $X$ and rational numbers $a_i$, $i\in S$, satisfying the condition (\ref{eqDivi}). To this data, we associate a rank one local system $\cL(\bm a)$ on $U$ as follows. The set
$$\textstyle
\Pic^\tau(X,E)=\{ (L,\bm\alpha)\in \Pic(X)\times [0,1)^S\mid c_1(L)=\sum_{i\in S}\alpha_i[E_i]\in H^2(X,\bR)\}.
$$
 is endowed with a natural group structure; and, the map $\Pic^\tau(X,E)\to \Hom(H_1(U,\bZ),S^1)$ sending $(L,\bm\alpha)$ to the unitary rank one local system on $U$ with monodromy around a generic point of $E_i$ (always meant along a loop bounding a small normal disc to $E_i$, centered at the point, in counter-clockwise direction using the natural orientation) given by multiplication by $e^{2\pi \sqrt{-1}\al_i}$ for all $i\in S$, is a group isomorphism, by \cite[Theorem 1.2]{B-uls}. 
 Our local system $\cL(\bm a)$ corresponds to the pair $(\cO_X(K_X-\sum_{i\in S}\lfloor a_i-1\rfloor E_i), (\{a_i-1\})_{i\in S})$, where $\lfloor \_\rfloor$, $\{\_  \}$ are the round-down and, respectively, fractional part. 
Hence $\cL(\bm a)$ is uniquely determined by declaring that the monodromy around a generic point of $E_i$ is the multiplication by $e^{2\pi \sqrt{-1}a_i}$.

(b)
Furthermore, the isomorphism $\Pic^\tau(X,E)\xra{\sim} \Hom(H_1(U,\bZ),S^1)$ is compatible with different log resolutions of $(X,E)$. Let $\mu:X'\to X$ be a log resolution of $(X,E)$ that is an isomorphism above $U$. Denote by $E_j'$ with $j\in S'$ the irreducible components of $\mu^{-1}(E)$. Then the map
$$\textstyle
\Pic^\tau(X,E) \to \Pic^\tau(X',E'),\quad 
(L,\bm\al)\mapsto (\mu^*L\otimes_{\cO_{X'}}\cO_{X'}(-\sum_{j\in S'}\lfloor \beta\rfloor E'_j),\bm\beta)
$$
is a group isomorphism, where $\mu^*(\sum_{i\in S}\al_iE_i)=\sum_{j\in S'}\beta_jE'_j$, by \cite{B-uls}.

(c) If $E$ is a normal crossings divisor and $\cL(\omega^{1/q})$ is as in the introduction with $a_i\neq 1$, namely, $\cL(\omega^{1/q})$ is defined by declaring that its sections  over a non-empty connected open subset are the local analytic branches of $\omega^{1/q}$ up to multiplication by complex numbers, then $\cL(\bm a)\cong \cL(\omega^{1/q})$, as it can be seen from the local monodromies. 
\end{rmk}

\subsection{Motivic principal value integrals.}\label{subsmpvi}	The following generalization is due to Veys \cite{Motivic_Principal_Value_Integrals_Veys}:

\begin{rmk}\label{rmkVeys}
With the $X$, $(E_i)_{i\in S}$, and $\bm a=(a_i)_{i\in S}$ as in \ref{subsection_Multivalued_n_form}, suppose \ that there exists a {\it good log resolution} $\mu:X'\to X$ of $(X,\sum_{i\in S}(a_i-1)E_i)$, that is, a log resolution of $(X,E)$ such that all the coefficients of the divisor $K_{X'/X}+\mu^*(\sum_{i\in S}(a_i-1)E_i)$ are $\neq -1$. Here, and elsewhere in this paper, we do not require that a log resolution $\mu$ has to be an isomorphism over $U$, unless mentioned explicitly. Then the expression on the right-hand side of (\ref{eqPVI})  for $X'$ is well-defined in the ring $\mathcal R_q$, see below,  and it is independent of the choice of a good log resolution by \cite[2.6]{Motivic_Principal_Value_Integrals_Veys}. We shall denote this by $\PV(X,\bm a)$.
If  $\sum_{i\in S}(a_i-1)E_i=\divi(\omega^{1/q})$ for some multivalued rational $n$-form $\omega^{1/q}$ on $X$, then a log resolution is good iff $\mu^*\omega^{1/q}$ has no logarithmic pole, and so the motivic principal value integral $\PV\int_{X'}\mu^*\omega^{1/q}$ is well-defined, and it is independent of such resolutions. If we  denote this by $\PV\int_{X}\omega^{1/q}$, then $\PV\int_{X}\omega^{1/q}=\PV(X,\bm a)$. 
\end{rmk}

 We now assume for the rest of the section that $E$ is a simple normal crossings divisor, equal to the support of the divisor of a  multivalued rational $n$-form $\omega^{1/q}$ without logarithmic poles. 
 
 \begin{rmk}
 The motivic principal value integral
 $\PV \int_X \omega^{1/q}$ is defined in the subring $\mathcal R_q$ of the localization of $K_0(Var_\bC)[T]/(\bL T^q-1)$ with respect to the elements $T=:\bL^{-1/q}$ and $\bL^{i/q}-1$ for $i\in\bZ\setminus\{0\}$, generated by $K_0(Var_\bC)$, $\bL^{-1}$, $(\bL-1)/(\bL^{i/q}-1)$ for $i\in\bZ\setminus\{0\}$.
 \end{rmk}


				\begin{rmk}\label{rmkClo}	
Using the inclusion-exclusion principle one has
$$
\PV \int_X \omega^{1/q} \textstyle= \mathbb L^{-n}\sum_{J\subset S} [E_J] \prod_{i\in J} \left(\frac{\mathbb L-1}{\mathbb L^{a_i}-1}-1\right).
$$
Indeed, the left-hand side is
$$ 		\textstyle \mathbb L^{-n}\sum_{I\subset S} [E_I^\circ] \prod_{i\in I} \frac{\mathbb L-1}{\mathbb L^{a_i}-1} = \mathbb L^{-n}\sum_{I\subset S} \left(\sum_{J\supset I} (-1)^{\vert J\vert -\vert I\vert} [E_J] \right) \prod_{i\in I} \frac{\mathbb L-1}{\mathbb L^{a_i}-1}$$
$$ \textstyle = \mathbb L^{-n} \sum_{J \subset S}[E_J] \left( \sum_{I \subset J} (-1)^{\vert J\vert - \vert I\vert} \prod_{i\in I}  \frac{\mathbb L-1}{\mathbb L^{a_i}-1}  \right)$$
which equals the right-hand side of the equality claimed above.
 	\end{rmk}

	\begin{rmk}[MPVI as residues]\label{rmkMPIVres}
	
	Motivic principal value integrals arise sometimes as residues of motivic zeta functions. 
	Let $f \in \mathbb C[x_0,\ldots ,x_n]\setminus \mathbb C$. Let $\mu : X \to \mathbb C^{n+1}$ be a log resolution of $f^{-1}(0)$. Let $D_i$ with $i\in S'$ be the irreducible components of $(f\circ\mu)^{-1}(0)$. Write $K_{X/\mathbb C^{n+1}} = \sum_{i\in S'}(\nu_i-1) D_i$ and $\mu^*(\mathrm{div}\, f) = \sum_{i\in S'} N_i D_i$ for  $\nu_i, N_i\in\bZ$. One should be careful that $S'$ differs from $S$ in \eqref{eqPVI}, where the ambient space is projective.
	The motivic zeta function of $f$ admits the  formula
$$ 		\textstyle	Z_f^{\mathrm{mot}}(s) = \mathbb L^{-n}\sum_{I\subset S'} [D_I^\circ] \prod_{i\in I} \frac{\mathbb L-1}{\mathbb L^{N_is+\nu_i}-1}
$$ 		where $D_I^\circ = (\bigcap_{i\in I} D_i)\setminus (\bigcup_{j\not\in I} D_j)$ in an appropriate localization of the ring $K_0(Var_\bC)[\bL^{-s}]$ by \cite{Motivic_Zeta_Function}. For $I = \emptyset$, we set $D_{\emptyset}^\circ = X\setminus (\bigcup_{j\in S'} D_j)$. If after cancelations, $\bL^{N_is+\nu_i}-1$ still survives in the denominator, one says that $-\nu_i/N_i$ is a {\it pole} of $Z_f^{\mathrm{mot}}(s)$.
 	We say that $D_i$ is  {\it numerically generic} if  $\nu_i/N_i \neq \nu_j/N_j$ for all $j\neq i$. Under this assumption, $-\nu_i/N_i$ is a pole of $Z_f^{\mathrm{mot}}(s)$ if and only if its ``residue" does not vanish, that is,
$$ 	\textstyle	R_{i} := \sum_{i\in I\subset S} [D_I^\circ] \prod_{ j \in I\setminus\{i\}} \frac{\mathbb L-1}{\mathbb L^{\alpha_j}-1} \neq 0, 
$$ 	where $\alpha_j = \nu_j-\frac{\nu_i}{N_i} N_j$.   Let $S_i = \{j\in S\setminus \{i\} \mid D_i\cap D_j = \emptyset\}$. Then $R_{i}=\PV\int_{D_i}\eta_i$ for a multivalued rational $n$-form $\eta_i$ on $D_i$ with divisor  $\sum_{j\in S_i} (\alpha_j-1) (D_j\cap D_i)$. Namely,  $\eta_i$ is the Poincar\'e residue of $(f\circ\mu)^{-\nu_i/N_i}\mu^*(dx_0\wedge \ldots \wedge dx_n)$, see \cite{Vanishing_of_Principal_Value_Integrals_on_Surfaces}.
\end{rmk}

	\subsection{Hyperplane arrangements}\label{subsection_canonical_log_resolution}
	
	Let $A = \cup_{i=1}^d V_i \subset \mathbb C^{n+1}=:V$ be a central  hyperplane arrangement with degree $d$. Recall that $A\subset V$ is indecomposable, as defined in the introduction, if and only if $\chi(\mathbb P(V)\setminus \mathbb P(A)) \neq 0$, by \cite{Resolution_of_Hyperplane_Arrangements_Schchtman_Terao_Varchenko}. 
	Let $\mathcal S$ be the set of edges of $A$ different than the origin $\bm 0$ in $V$, as in the introduction, so that $S$ is in bijection with the set of edges of the projectivized arrangement $\bP(A)$. Denote by $$\mu : \mathbb P(V)^{\mathcal S} \to \mathbb P(V)$$ the canonical log resolution of $(\mathbb P(V),\mathbb P(A))$ obtained by blowing up, inductively in increasing order of dimensions, the (strict transforms of the) edges of $\bP(A)$. The irreducible components of $\mu^{-1}(\bP(A))$ are in bijection with the edges in $\cS$. We let $E_W$ be the component corresponding to $W\in \cS$. For  $I\subset \cS$ we set $E_I=\cap_{W\in I}E_W$ and $E_I^\circ=E_I\setminus\cup_{W\in\cS\setminus I}E_W$.

For an edge $W \in \mathcal S$,  let 
	$$ \mathcal S_W := \{W'\in \mathcal S \mid W\subsetneq W'\},\quad
		 \mathcal S^W := \{W' \in \mathcal S \mid W'\subsetneq W\}$$
		 the latter equal to the set of edges different than the origin of the  hyperplane arrangement induced by $A$ in  $W$. Set $\mathcal S^{V} = \mathcal S_{\bm 0} = \mathcal S$. For $ W_1, W_2 \in  \mathcal S\cup \{\bm 0,V\}$ with $ W_1 \subsetneq  W_2$,  let
	\begin{displaymath}
		\mathcal S_{ W_1}^{ W_2} := \{W'/ W_1 \mid W' \in \mathcal S_{W_1}\cap \mathcal S^{ W_2}\},
	\end{displaymath}
	 the set of edges different than the origin of the induced hyperplane arrangement  in $ W_2/ W_1$.

	\begin{proposition}\label{propBS} \cite[2.7]{Jumping_Coefficient_and_Spectrum_of_a_Hyperplane_Arrangement_Budur_Saito} \label{Budur-Saito_Description_of_E_I}
		Suppose $I = \{W_1,\ldots ,W_r\} \subset \mathcal S,r = \vert I\vert$, then $E_I\neq \emptyset$ if and only if $W_1\subsetneq \ldots  \subsetneq W_r$ after some permutation. If $W_1\subsetneq \ldots  \subsetneq W_r$,  let $W_0 =  \bm 0 $ and $W_{r+1} = V$, then
$			E_I = \prod_{i = 1}^{r+1} \mathbb P(W_i/W_{i-1})^{\mathcal S_{W_{i-1}}^{W_{i}}}.
$	\end{proposition}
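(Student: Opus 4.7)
The plan is to proceed by induction on $\dim\bP(V)$, using the recursive structure of the canonical log resolution: near the preimage of an edge $W$, the resolution factors as the product of the log resolutions of the induced arrangements in $W$ and in $V/W$. For the ``only if'' direction, suppose $W,W'\in I$ are not comparable under inclusion. Their intersection $W\cap W'$ is an edge of $A$, so it is either $\bm 0$ or lies in $\cS$. In the first case, $\bP(W)$ and $\bP(W')$ are already disjoint in $\bP(V)$, so their strict transforms never meet. In the second case, $W\cap W'$ has strictly smaller dimension than $W$ and $W'$ and is blown up before them; the resulting exceptional divisor separates the strict transforms of $\bP(W)$ and $\bP(W')$, whence $E_W\cap E_{W'}=\emptyset$. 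This forces $I$ to be a chain whenever $E_I\neq\emptyset$.

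For a chain $W_1\subsetneq\ldots\subsetneq W_r$, I first treat $r=1$ with $W:=W_1$. Just before $W$ is blown up, all edges strictly contained in $W$ have been blown up in order of increasing dimension; hence the strict transform of $\bP(W)$ at this stage is the canonical log resolution $\bP(W)^{\cS^{W}}$ of the induced arrangement on $\bP(W)$. Since every previous blowup center sits inside some $\bP(W'')$ with $W''\subsetneq W$, the normal bundle to this strict transform is the pullback of $N_{\bP(W)/\bP(V)}=\cO(1)\otimes(V/W)$, whose projectivization is a trivial $\bP(V/W)$-bundle. Therefore, immediately after blowing up $W$, the exceptional divisor is $\bP(W)^{\cS^{W}}\times \bP(V/W)$. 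The remaining blowups of edges $W'\supsetneq W$ (those incomparable with $W$ already miss $E_W$) restrict on $E_W$ to blowups of the strict transforms of $\bP(W'/W)$ in the second factor, performed in order of increasing dimension; this yields $\bP(V/W)^{\cS^{V}_{W}}$ and hence $E_{W}=\bP(W)^{\cS^{W}}\times \bP(V/W)^{\cS^{V}_{W}}$, the desired formula for $r=1$.

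For general $r\ge 2$, apply this $r=1$ description to $W_1$. Under the product decomposition of $E_{W_1}$, each $E_{W_i}$ with $i\ge 2$ restricts to $\bP(W_1)^{\cS^{W_1}}\times E'_{W_i/W_1}$, where $E'_\bullet$ denotes the exceptional divisors in the canonical log resolution of the induced arrangement in $\bP(V/W_1)$. Intersecting and invoking the inductive hypothesis on the second factor (whose ambient dimension is smaller) produces $\prod_{i=2}^{r+1}\bP(W_i/W_{i-1})^{\cS^{W_i}_{W_{i-1}}}$, and combining with the $\bP(W_1)^{\cS^{W_1}}$ factor gives the claim. The most delicate step is justifying the product structure of $E_W$ in the previous paragraph, which relies on the direct-sum splitting $N_{\bP(W'')/\bP(V)}\cong N_{\bP(W'')/\bP(W)}\oplus N_{\bP(W)/\bP(V)}|_{\bP(W'')}$ for nested linear subspaces $W''\subsetneq W\subsetneq V$: this ensures that the prior blowups along subvarieties of $\bP(W)$ do not perturb the normal direction transverse to $\bP(W)$, so that the normal bundle of the strict transform indeed pulls back from $\bP(W)$, and the trivialization of the exceptional $\bP(V/W)$-bundle is global.
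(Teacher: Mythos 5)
This proposition is cited in the paper from Budur--Saito \cite[Proposition 2.7]{Jumping_Coefficient_and_Spectrum_of_a_Hyperplane_Arrangement_Budur_Saito}; the paper itself gives no proof, so there is no in-paper argument to compare your proposal against. Your proof is a self-contained argument in the de Concini--Procesi / wonderful-models style, working with strict transforms and normal bundles, and it is essentially correct. A few places where the sketch is terser than the content warrants: in the ``only if'' direction, you should say a word about why, after earlier blowups of edges $W'''\subsetneq W\cap W'$, the strict transforms of $\mathbb{P}(W)$ and $\mathbb{P}(W')$ still meet \emph{only} along the strict transform of $\mathbb{P}(W\cap W')$ (so that blowing up $W\cap W'$ does separate them everywhere, not just over a generic point); the cleanest way is to note that $E_W\cap E_{W'}$ maps into $\mathbb{P}(W)\cap\mathbb{P}(W')=\mathbb{P}(W\cap W')$ and to analyse the fiber of the blowup over $\mathbb{P}(W\cap W')$ locally, where the arrangement is a product of an affine space with a central arrangement. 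Similarly, the claim that ``the strict transform of $\mathbb{P}(W)$ just before $W$ is blown up equals $\mathbb{P}(W)^{\mathcal{S}^W}$'' needs the observation that any earlier blowup center $W'''$ with $W'''\not\subset W$ is disjoint from the current strict transform of $\mathbb{P}(W)$ (because $W'''\cap W$, being smaller, was blown up first), so only the centers $W'''\subsetneq W$ actually modify it, and these are precisely the centers for the canonical log resolution of $\mathbb{P}(W)$. The normal-bundle statements you use (normal bundle of a strict transform pulls back, $\mathbb{P}(\mathcal{O}(1)\otimes(V/W))$ is a trivial $\mathbb{P}(V/W)$-bundle) are standard and applied correctly, and the product decomposition is then preserved by the subsequent blowups because each remaining center that meets $E_W$ has the form (whole first factor) $\times$ (subvariety of the second factor). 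These are all fillable gaps; the overall strategy is sound and matches the expected proof in the cited reference.
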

	
 Define		$ U(\mathcal S) := \mathbb P(V)\setminus \bP(A) \simeq \mathbb P(V)^{\mathcal S} \setminus\cup_{W\in \mathcal S} E_W.$

	\begin{corollary}\label{E_I_circ forumla}
		With notation of Proposition \ref{Budur-Saito_Description_of_E_I}, if $W_1\subsetneq \ldots  \subsetneq W_r$ then
$	[E_I^\circ] = \prod_{i = 1}^{r+1} [U({\mathcal S_{W_{i-1}}^{W_{i}}})] 
$ in $K_0(Var_\bC)$.	\end{corollary}
\begin{proof}	
	By Proposition \ref{propBS}, subsets $J \subset \mathcal S$ with $I \subset J$ and $E_J \neq \emptyset$ correspond to chains refining the chain $W_1 \subsetneq \ldots \subsetneq W_r$, that is, to chains of the form $\mathcal W_1 \subsetneq W_1 \subsetneq \mathcal W_2 \subsetneq W_2 \subsetneq \ldots \subsetneq W_r \subsetneq \mathcal W_{r+1}$, where each $\mathcal W_i$  is an increasing chain of edges in $\mathcal S_{W_{i-1}} \cap \mathcal S^{W_{i}}$, possibly empty. Applying the inclusion–exclusion principle, we obtain:
		\begin{align*}
			[E_I^\circ] & \textstyle = \sum_{J\supset I} (-1)^{\vert J\vert -\vert I\vert}[E_J] = \sum_{\mathcal W_1,\ldots,\mathcal W_{r+1}} \prod_{i = 1}^{r+1}  \left((-1)^{\vert \mathcal W_i\vert} \prod_{j=1}^{\vert \mathcal W_i\vert+1} [\mathbb P(W_{i,j}/W_{i,j-1})^{\mathcal S_{W_{i,j-1}}^{W_{i,j}}}] \right) \\
			&\textstyle = \prod_{i = 1}^{r+1} \sum_{\mathcal W_i} \left((-1)^{\vert \mathcal W_i\vert} \prod_{j=1}^{\vert \mathcal W_i\vert+1} [\mathbb P(W_{i,j}/W_{i,j-1})^{\mathcal S_{W_{i,j-1}}^{W_{i,j}}}] \right),
		\end{align*}
		where $\mathcal W_i = \{W_{i,1} \subsetneq \ldots \subsetneq W_{i,|\mathcal W_i|}\}$ ranges over all increasing chains in $\mathcal S_{W_{i-1}} \cap \mathcal S^{W_i}$, with $W_{i,0} = W_{i-1}$ and $W_{i,|\mathcal W_i| + 1} = W_i$. Denote the expression in brackets in the last term of the displayed equality above  by $\mathcal Q_{i,\mathcal W_i}$.
	
Now fix $i \in \{1, \ldots, r+1\}$. Let $V' = W_i / W_{i-1}$ and $\mathcal S' = \mathcal S_{W_{i-1}}^{W_i}$. For any $I' \subset \mathcal S'$, let $E_{I'}'$ denote the corresponding divisor in $\mathbb{P}(V')^{\mathcal S'}$. Then:
		\begin{displaymath}\textstyle
			\sum_{\mathcal W_i} \mathcal Q_{i,\mathcal W_i} = \sum_{I' \subset S'} (-1)^{\vert I'\vert} [E_{I'}'] = [\mathbb P(V')^{\mathcal S'}] - [\cup_{i'\in \mathcal S'} E_{\{i'\}}'] = [U(\mathcal S_{W_{i-1}}^{W_i})],
		\end{displaymath}
		again by the inclusion–exclusion principle. This completes the proof.
\end{proof}

An immediate consequence is the following. Let $\mathcal S_q$ be the subring of $\mathcal R_q$ generated by $\bL, \bL^{-1}, (\bL-1)/(\bL^{i/q}-1)$ for $i\in\bZ\setminus\{0\}$. Then we have an isomorphism between $\mathcal S_q$ and the subring $\bZ[t,t^{-1}][(t-1)/(t^{i/q}-1)\mid i\in\bZ\setminus\{0\}]\subset \bQ(t^{1/q})$ of the function field in one variable $t^{1/q}$ over $\bQ$, given by $\bL\mapsto t$, as one can see using the virtual Poincar\'e, or Hodge-Deligne, specialization. Note also that  $\bQ(\bL^{1/q})\cong \bQ(t^{1/q})$, so  we have an injection $\mathcal S_q\subset\bQ(\bL^{1/q}).$ Also note that
\begin{equation}
	\frac{1}{\mathbb L^a-1} = 
	\begin{cases}
		-(1+\mathbb L^a+\mathbb L^{2a}+\ldots), & a>0,\\
		\mathbb L^{-a}(1+\mathbb L^{-a}+\mathbb L^{-2a}+\ldots), & a<0,
	\end{cases}
	\label{eq*}
\end{equation}
is well-defined in $\bZ\llbracket \bL^{1/q}\rrbracket$ if $0\neq a\in q^{-1}\bZ$. Since $U({\mathcal S_{W_{i-1}}^{W_{i}}})$ is the complement in an affine space of a hyperplane arrangement, by inclusion-exclusion we have that is class $[U({\mathcal S_{W_{i-1}}^{W_{i}}})]$ is a polynomial in $\bL$, hence it lies in $\mathcal S_q$. Then Corollary \ref{E_I_circ forumla} implies:

\begin{cor}\label{corDefRing} Let $\omega^{1/q}$ be a multivalued rational $n$-form on $\mathbb P(V)^{\mathcal S}$ without logarithmic poles whose divisor has support in $\cup_{W\in\cS}E_W$. Then the motivic principal value integral $\PV\int_{\mathbb P(V)^{\mathcal S}}\omega^{1/q}$ lies in the subring $\mathcal S_q$ of $\bQ(\bL^{1/q})$. Moreover, $\bL^n\cdot\PV\int_{\mathbb P(V)^{\mathcal S}}\omega^{1/q}$ lies in $\bZ\llbracket \bL^{1/q}\rrbracket$.
\end{cor}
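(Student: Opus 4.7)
The plan is to substitute the explicit formula for $[E_I^\circ]$ from Corollary \ref{E_I_circ forumla} into the defining expression (\ref{eqPVI}) and then analyze the resulting finite sum factor-by-factor. Concretely, I would write
\begin{displaymath}\textstyle
\PV\int_{\bP(V)^\cS}\omega^{1/q} \;=\; \bL^{-n}\sum_{W_1\subsetneq \ldots \subsetneq W_r}\Bigl(\prod_{i=1}^{r+1}[U(\cS_{W_{i-1}}^{W_i})]\Bigr)\prod_{j=1}^r\frac{\bL-1}{\bL^{a_{W_j}}-1},
\end{displaymath}
where the outer sum runs over chains of edges in $\cS$ (including the empty chain, which contributes $[U(\cS)]$), with the convention $W_0=\bm 0$, $W_{r+1}=V$. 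Note that the no-logarithmic-poles hypothesis gives $a_{W_j}\in q^{-1}\bZ\setminus\{0\}$, so each factor $(\bL-1)/(\bL^{a_{W_j}}-1)$ is a generator of $\cS_q$.

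The next step is to observe that each factor $[U(\cS_{W_{i-1}}^{W_i})]$ is the class of the complement of a projective hyperplane arrangement and is therefore a polynomial in $\bL$ with integer coefficients: by inclusion-exclusion on intersections of the hyperplanes, $[U(\cS')]$ is an integer combination of classes of projective linear subspaces, each of which is a polynomial in $\bL$. Hence $[E_I^\circ]\in\bZ[\bL]\subset \cS_q$, and multiplying by $\bL^{-n}\in\cS_q$ together with the product over $j$ of elements of $\cS_q$ keeps the total sum inside $\cS_q$. This gives the first assertion.

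For the second assertion I would multiply through by $\bL^n$ and show termwise that each summand lies in $\bZ\llbracket \bL^{1/q}\rrbracket$. The polynomial $[E_I^\circ]\in\bZ[\bL]$ is trivially in this ring. For each fraction $(\bL-1)/(\bL^{a_{W_j}}-1)$ the expansion of $1/(\bL^{a_{W_j}}-1)$ displayed just above the statement shows that it has integer coefficients both when $a_{W_j}>0$ and when $a_{W_j}<0$; multiplying by the polynomial $\bL-1$ preserves this. Since $\bZ\llbracket \bL^{1/q}\rrbracket$ is closed under finite products and finite sums, the whole expression for $\bL^n\cdot\PV\int\omega^{1/q}$ lies there.

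There is really no essential obstacle: the argument is a direct combination of Corollary \ref{E_I_circ forumla}, the polynomiality of classes of projective hyperplane arrangement complements, and the stated power-series expansions of $1/(\bL^a-1)$. The only thing to watch is keeping straight the sign/shift cases $a>0$ versus $a<0$ in the power series expansion, and verifying that the embedding $\cS_q\hookrightarrow \bQ(\bL^{1/q})$ is compatible with the expansion $\bZ\llbracket \bL^{1/q}\rrbracket\hookrightarrow\bQ(\!(\bL^{1/q})\!)$, which is already implicit in the discussion preceding the statement.
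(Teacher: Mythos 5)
Your argument is correct and is essentially the paper's own reasoning: the paper calls the corollary "an immediate consequence" of Corollary \ref{E_I_circ forumla}, the polynomiality of classes of projective arrangement complements, and the displayed expansions of $1/(\bL^a-1)$, which is exactly the chain of observations you assemble. The one implicit step worth noting, which you handle correctly, is that summing over chains in $\cS$ rather than only over components of the actual support is harmless because any $W$ with $a_W=1$ contributes the trivial factor $(\bL-1)/(\bL-1)=1$.
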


	\section{Generic hyperplane arrangements}\label{secGeneric}
	
	In this section we prove Theorem \ref{prop3}. 
	Let $\cA$ be a generic hyperplane arrangement of degree $d$ in $\bP^n$. Let $\omega^{1/q}$ be a multivalued rational $n$-form on $\bP^n$ without logarithmic poles and whose divisor has support exactly $\cA$. Then $\cA$ is a simple normal crossings divisor. Let $E_i$ be the irreducible components of $\cA$ with $i\in S=\{1,\ldots, d\}$. Write $\mathrm{div}(\omega^{1/q}) = \sum_{i = 1}^d (a_i-1) E_i$, so that $a_i \neq 0,1$ are in $\frac{1}{q}\bZ$.  Since $\deg K_{\mathbb P^n} = -n-1$, we have $\sum_{i=1}^d a_i = d-n-1$.

\begin{proof}[Proof of Theorem \ref{prop3} (a)] 	Using Remark \ref{rmkClo}, in the ring $\mathcal S_q$ we have 
		$$\textstyle
			 \mathbb L^n \cdot \PV \int_{\bP^n} \omega^{1/q}  \stackrel{\text{*}}{=} \sum_{I\subset S} [\mathbb P^{n-\vert I \vert}] \prod_{i\in I}\left(\frac{\mathbb L-1}{\mathbb L^{a_i}-1}-1\right) \stackrel{\text{**}}{=} \sum_{l = 0}^{n} [\mathbb P^{n-l}]\sum_{i=0}^{l} (-1)^{l-i} \frac{{d\choose l}{l\choose i}}{{d\choose i}} T_i$$
			$$\textstyle = \sum_{l = 0}^{n} [\mathbb P^{n-l}]\sum_{i=0}^{l} (-1)^{l-i} {d-i\choose l-i} T_i.
		$$ 
		where $T_i$ is the elementary symmetric polynomial of degree $i$ in $\frac{\mathbb L- 1}{\mathbb L^{a_1}-1},\ldots ,\frac{\mathbb L-1}{\mathbb L^{a_d}-1}$. The equality $(*)$ holds because the divisor $\cup_{i} E_i$ has simple normal crossings, and $[E_I]$ equals $[\mathbb{P}^{n-|I|}]$ if $|I| \leq n$, and $0$ otherwise. For the equality $(**)$ note that each product $\prod_{i\in I}\left(\frac{\mathbb{L}-1}{\mathbb{L}^{a_i}-1}-1\right)$ contributes to ${|I| \choose i}$ terms in $T_i$ with a sign $(-1)^{|I| - i}$. Moreover, for a fixed $|I|$, there are ${d \choose |I|}$ such products, and $T_i$ itself consists of ${d \choose i}$ terms, from which $(**)$ follows.

		Let $S_r$ be the elementary symmetric polynomial of degree $r$ in $\mathbb L^{a_1},\ldots ,\mathbb L^{a_d}$. Let $\tau = \prod_{i=1}^d(\mathbb L^{a_i}-1)^{-1}$. 
	Then
		$$\textstyle
			T_i   = (\mathbb L-1)^i\tau\sum_{r = 0}^{d-i} (-1)^{d-i-r} \frac{{d\choose d-i} {d-i\choose r}}{{d \choose r}} S_r =  (\mathbb L-1)^i\tau\sum_{r = 0}^{d-i} (-1)^{d-i-r} {d-r\choose d-i-r} S_r.
		$$
		Here, while $T_i$ lies in the ring $\mathcal S_q$, the individual terms in this expression lie in the ring extension $\bQ(\bL^{1/q})$ of $\cS_q$, isomorphic to the field of rational functions in one variable over $\bQ$.
		
		Hence  $\mathbb L^n \cdot \PV \int_{\bP^n} \omega^{1/q}$ is equal to
		\begin{align*}
			 &  \textstyle \tau\sum_{l = 0}^{n} [\mathbb P^{n-l}]\sum_{i=0}^{l} (-1)^{l} {d-i\choose l-i}(\mathbb L-1)^i\sum_{r = 0}^{d-i} (-1)^{d-r} {d-r\choose d-i-r} S_r = \\
			& \textstyle= \tau\sum_{l = 0}^{n} [\mathbb P^{n-l}]\sum_{i=0}^{l} (-1)^{l} {d-i\choose l-i}(\mathbb L-1)^i\sum_{r = i}^{d} (-1)^{r} {r\choose i} S_{d-r} \\
			& \textstyle= \tau\sum_{r = 0}^{d} (-1)^r S_{d-r} \cdot \sum_{l = 0}^n\sum_{i = 0}^{\min\{r ,l\}} [\mathbb P^{n-l}] (-1)^l  {d-i\choose l-i}{r\choose i} (\mathbb L-1)^i \\
			&\textstyle = \tau\sum_{r = 0}^{d} (-1)^r S_{d-r} \cdot \sum_{l = 0}^n\sum_{i = 0}^{l} [\mathbb P^{n-l}] (-1)^l  {d-i\choose l-i}{r\choose i} (\mathbb L-1)^i. 
		\end{align*}
		So we have to compute the term
		\begin{align*}
			& \textstyle \sum_{l = 0}^n\sum_{i = 0}^{l} [\mathbb P^{n-l}] (-1)^l  {d-i\choose l-i}{r\choose i} (\mathbb L-1)^i = \sum_{i = 0}^n (\mathbb L-1)^i {r\choose i}\sum_{l = i}^n [\mathbb P^{n-l}](-1)^l {d-i\choose l-i}\\ 
			& \textstyle= \sum_{i = 0}^n (\mathbb L-1)^i {r\choose i}\sum_{l = i}^n \sum_{j = 0}^{n-l} \mathbb L^j (-1)^l {d-i\choose l-i}  = \sum_{i = 0}^n (\mathbb L-1)^i {r\choose i} \sum_{j = 0}^{n-i} \mathbb L^j \cdot\sum_{l = i}^{n-j}  \mathbb  (-1)^l {d-i\choose l-i}\\
			& \textstyle= \sum_{i = 0}^n (\mathbb L-1)^i {r\choose i} \sum_{j = 0}^{n-i} \mathbb L^j (-1)^{n-j}{d-1-i \choose n-j-i} \\ 
			& \textstyle= \sum_{i = 0}^n{r\choose i} \sum_{j = 0}^{n-i}  \sum_{k =0}^i  (-1)^{n-j+i-k} {i\choose k} {d-1-i \choose n-j-i} \mathbb L^{j+k}  \\
			& \textstyle= \sum_{i = 0}^n {r\choose i} \sum_{m = 0}^{n}\; \sum_{k = \max\{0,m-n+i\}}^{\min\{m,i\}}  (-1)^{n+i-m} {i\choose k} {d-1-i \choose n-(m-k)-i} \mathbb L^{m}  \\ 
			&\textstyle = \sum_{m = 0}^{n} \mathbb L^{m} \sum_{i = 0}^n\;\sum_{k = \max\{0,m-n+i\}}^{\min\{m,i\}}  (-1)^{n+i-m} {r\choose i}{i\choose k} {d-1-i \choose n-(m-k)-i}. 
		\end{align*}
Replacing $k$ by $a = m-k$ this becomes		
		$$\textstyle
		 \sum_{m = 0}^{n} \mathbb L^{m} \sum_{i = 0}^n\;\sum_{a = \max\{0,m-i\}}^{\min\{m,n-i\}}  (-1)^{n+i-m} {r\choose i}{i\choose m-a} {d-1-i \choose d-1-n+a}
		$$
		$$ \textstyle= \sum_{m = 0}^{n} \mathbb L^{m}\sum_{i = 0}^n\sum_{a = 0}^{m}  (-1)^{n+i-m} {r\choose i}  {i\choose m-a} {d-1-i \choose d-1-n+a}. 
$$		

Let
$
G(n,m,d,r):= \sum_{i = 0}^n\sum_{a = 0}^{m}  (-1)^{n+i-m} {r\choose i}{i\choose m-a} {d-1-i \choose d-1-n+a}.
$
		To show (\ref{eqProp3}) it  suffices to prove that
\be\label{eqCo}		\textstyle
	G(n,m,d,r)=  (-1)^n{d-1-r \choose n-m} {r-1\choose m}.
	\ee
		 If $0\leq n,m \leq 1$, one can easily check the equality (\ref{eqCo}). To prove (\ref{eqCo}) in general we use induction. 
		 Define first  $F(n,m,d,i) := \sum_{a = 0}^{m}{i\choose m-a} {d-1-i \choose d-1-n+a}.$
	One can easily check that	 		
		$$F({n,m,d,i}) = F(n-1,m,d-1,i-1)+F(n-1,m-1,d-1,i-1),\quad\text{ for } n,m,d,i\geq 1.$$
				Then 
		$$\textstyle G(n,m,d,r) 
			 = (-1)^{n-m}{d-1\choose n-m} + \sum_{i = 0}^{n-1} (-1)^{n-1+i-m} {r \choose i+1}F(n,m,d,i+1) $$ 
			 $$\textstyle = (-1)^{n-m}{d-1\choose n-m} + \sum_{i = 0}^{n-1} (-1)^{n-1+i-m} {r \choose i+1}\bigl(F(n-1,m,d-1,i)+F(n-1,m-1,d-1,i)\bigr).
		$$
		Doing  the same computation for $G(n-1,m-1,d-1,r-1)$ and $G(n-1,m,d-1,r-1)$, we see that 
		\begin{align*}
			& G(n,m,d,r) - G(n-1,m,d-1,r-1) + G(n-1,m-1,d-1,r-1) = G(n,m,d,r-1)
		\end{align*}
		Then by the induction hypothesis, to prove (\ref{eqCo}) it suffices to show that
		\begin{align*}
			& \textstyle{d-1-r \choose n-m} {r-1\choose m} + {d-1-r \choose n-m-1} {r-2\choose m} - {d-1-r \choose n-m} {r-2\choose m-1} =  {d-r \choose n-m} {r-2\choose m}
		\end{align*}
		One  checks easily that this holds. 
	\end{proof}

\begin{proof}[Proof of Theorem \ref{prop3} (b)]  If $1\le d \leq n+1$ then
$			{d-1-r \choose i} {r-1\choose n-i} = 0
$		for all $0\leq r \leq d-1$ and $0\leq i\leq n$. From (\ref{eqProp3}) it follows that $
\PV \textstyle\int_{\bP^n} \omega^{1/q}
=0$ in this case, also shown by \cite[Prop. 6.1]{VeyR}.

	Assume now that $d \geq n+2$.	Since  $\PV_{\bP^n} \int \omega^{1/q}$ lives in the ring $\mathcal S_q\subset \bQ(\bL^{1/q})$, it is non-zero iff 
		\be\label{eqA} \textstyle\sum_{r = 0}^{d} (-1)^{n+r} S_{d-r} \cdot\textstyle \sum_{i = 0}^n {d-1-r \choose i} {r-1\choose n-i} \mathbb L^{n-i}  \neq 0 
		\ee
in $\bQ(\bL^{1/q})$. We show that there is a unique term, with a non-zero coefficient, on the left-hand side of (\ref{eqA}) containing the highest power of $\mathbb L^{1/q}$. As a consequence, the non-vanishing (\ref{eqA}) holds. 
A first observation is that
		\begin{displaymath}
			\deg_{\mathbb L} \biggl(\textstyle \sum_{i = 0}^n {d-1-r \choose i} {r-1\choose n-i} \mathbb L^{n-i} \biggr) = 
			\begin{cases}
				n, & \text{ if } r = 0 \text{ or } r \geq n+1,\\
				r-1, & \text{ if } 1\leq r \leq n.
			\end{cases}
		\end{displaymath}
			We may assume that
$			a_1 \geq a_2 \geq \ldots  \geq a_t > 0 > a_{t+1} \geq \ldots  \geq a_d
$ for some index $t$.		
		The candidate term with the largest degree in $S_{d-r}  \sum_{i = 0}^n {d-1-r \choose i} {r-1\choose n-i} \mathbb L^{n-i}$ is: 
	 $\mathbb L^{d-1}$ if $r = 0$,
	$\mathbb L^{a_1+\ldots +a_{d-r}+r-1}$ if $1\leq r\leq n$,
	$\mathbb L^{a_1+\ldots +a_{d-r}+n}$ if $r\geq n+1$.
				
		Suppose first that  $d-t \geq n+1$.
		In this case, since  $a_1+\ldots +a_t = d-n-1-(a_{t+1}+\ldots +a_d) > d-n-1$, $\mathbb L^{a_1+\ldots +a_t+n}$ is the unique term on the left-hand side of (\ref{eqA}) with the largest degree. 
		
		Suppose now that $d-t \leq n$.
	 If $a_1 < 1$, then $a_1+\ldots +a_{d-r}+r-1 < d-1$ and $a_1+\ldots +a_{d-r}+n < d-1$, so $\mathbb L^{d-1}$ is the unique term with the largest degree on the left-hand side of (\ref{eqA}).
		 If $a_1 > 1$, then
			$a_1 \geq \ldots  \geq a_p > 1 > a_{p+1} \geq \ldots $
for some index $p$.		Then $\mathbb L^{a_1+\ldots+a_p+d-p-1}$ is the unique term with the largest degree on the left-hand side of (\ref{eqA}) if $d-n\le p\le t$. If $p<d-n$ then $\bL^{d-1}$ is the unique term with the largest degree on the left-hand side of (\ref{eqA}). If $a_1=1$ we can have more than one term of largest degree, but we have assumed at the support of the divisor of the $n$-form is the whole arrangement, so this case is excluded.
		
		In conclusion, the non-vanishing (\ref{eqA}), and hence the non-vanishing of $\PV \int_{\bP^n} \omega^{1/q}$, holds.
		The non-vanishing of $\PV \int_{X} \mu^*\omega^{1/q}$ if $\mu^*\omega^{1/q}$ has no logarithmic poles follows from Remark \ref{rmkVeys}. 
\end{proof}

\section{General hyperplane arrangements}\label{secGenHA}
	
We prove now the remaining claims from the introduction.
	 Let $A = \bigcup_{i=1}^d V_i \subset V:=\mathbb C^{n+1}$ be a central  hyperplane arrangement with degree $d\ge 1$, $\cS$  the set of edges of $A$ different than the origin, and $\mu : \mathbb P(V)^{\mathcal S} \to \mathbb P(V)$  the canonical log resolution of $(\bP(V),\bP(A))$. We use the notation as in \ref{subsection_canonical_log_resolution}.

	\subsection{The constant term.}\label{a Non-Vanishing Condition for PVI of General Hyperplane Arrangements} In this subsection we prove Proposition \ref{prop1}. We start with three  lemmas.

	\begin{lemma}\label{Inclusion-Exclusion for arbitrary set}
		Let $B$ be a set. Let  $\mathcal C$ be a finite family of subsets of $B$ closed under intersection. Let $D = \cup_{C\in \mathcal C} C$. Then
$			\mathbbm{1}_{D} = \sum_{r\geq 0}\;\sum_{{C_0\subsetneq  \ldots \subsetneq C_r, C_i \in \mathcal C}}  (-1)^r \cdot \mathbbm{1}_{C_0}.
$	\end{lemma}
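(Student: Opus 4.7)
The identity is pointwise, so I fix $b\in B$ and evaluate both sides at $b$. If $b\notin D$, then $\mathbbm{1}_{C_0}(b)=0$ for every $C_0\in\mathcal C$, so both sides vanish. Assume therefore that $b\in D$, and set $\mathcal C_b:=\{C\in\mathcal C\mid b\in C\}$. The right-hand side at $b$ reads
\begin{displaymath}
\sum_{r\geq 0}(-1)^r\cdot\#\{C_0\subsetneq\cdots\subsetneq C_r\mid C_i\in\mathcal C_b\},
\end{displaymath}
which is a finite sum because $\mathcal C$ (hence $\mathcal C_b$) is finite. It suffices to show this equals $1$.

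The key observation is that $\mathcal C_b$ is itself closed under intersection: if $b$ lies in $C,C'\in\mathcal C$ and $C\cap C'\in\mathcal C$, then $b\in C\cap C'\in\mathcal C_b$. Since $\mathcal C_b$ is non-empty and finite, the element $C_\ast:=\bigcap_{C\in\mathcal C_b}C$ lies in $\mathcal C_b$ and is the (unique) minimum of the poset $(\mathcal C_b,\subseteq)$.

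Having a minimum, I would then set up an involution on chains that cancels almost everything. Split the chains in $\mathcal C_b$ according to whether they contain $C_\ast$ or not, and define the map
\begin{displaymath}
(C_0\subsetneq\cdots\subsetneq C_r)\ \longleftrightarrow\ (C_\ast\subsetneq C_0\subsetneq\cdots\subsetneq C_r),
\end{displaymath}
which is a bijection between chains of length $r+1$ not containing $C_\ast$ and chains of length $r+2$ containing $C_\ast$, the only fixed ``unpaired'' chain being the singleton $\{C_\ast\}$ (of length $1$). Paired chains contribute $(-1)^r+(-1)^{r+1}=0$ to the alternating sum, so the total collapses to the contribution $(-1)^0=1$ of $\{C_\ast\}$, as required.

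I expect no real obstacle: the whole argument is the standard observation that the order complex of a poset with a minimum element is a cone and therefore has Euler characteristic $1$. The one thing to double-check is that $\mathcal C_b$ is closed under arbitrary (equivalently, by finiteness, pairwise) intersections, which follows immediately from the hypothesis on $\mathcal C$.
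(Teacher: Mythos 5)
Your proof is correct, and it takes a genuinely different route from the paper's. Both proofs begin by reducing pointwise to showing that $\sum_{r\geq 0}(-1)^r\cdot\#\{\text{chains of length }r+1\text{ in }\mathcal C_b\}=1$. The paper proceeds by induction on $\lvert\mathcal C_b\rvert$: it picks a \emph{maximal} element $C$, splits chains according to whether and how they interact with $C$, and telescopes using the induction hypothesis; closure under intersection is used only to guarantee that the auxiliary family $\mathcal C''=\{C'\subsetneq C\}$ is non-empty. You instead exploit the \emph{minimum} element $C_\ast=\bigcap_{C\in\mathcal C_b}C$, which exists precisely because $\mathcal C_b$ inherits closure under intersection, and construct a sign-reversing involution (prepend or delete $C_\ast$) whose unique fixed point is the singleton chain $\{C_\ast\}$. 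Your argument avoids induction entirely, makes the role of the intersection-closure hypothesis more transparent, and, as you note, is the combinatorial shadow of the fact that the order complex of a poset with a least element is a cone, hence has reduced Euler characteristic $0$. Both proofs are valid; yours is arguably the more conceptual and self-contained.
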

	\begin{proof} Here $\mathbbm{1}_{D}:B\to \{0,1\}$ is the indicator function $x\mapsto 1$ iff $x\in D$.
		For a point $x\in D$, the evaluation of the right-hand side at $x$ is
$			\sum_{r\geq 0}\;\sum_{{x\in C_0\subsetneq  \ldots \subsetneq C_r, C_i \in \mathcal C}}  (-1)^r.
$		We prove by induction on $\vert\mathcal C\vert$ that this equals $1$. It is trivial for $\vert \mathcal C\vert = 1$. We can assume that $x$ is contained in all $C \in \mathcal C$, otherwise we can exclude all sets in $\mathcal C$ that do not contain $x$ without affecting the sum. Define
$			S(\mathcal C) := \sum_{r\geq 0}\sum_{{C_0\subsetneq  \ldots \subsetneq C_r, C_i \in \mathcal C}}  (-1)^r.
$		We   show $S(\mathcal C) = 1$ as long as $\mathcal C\neq \emptyset$. Suppose $\vert \mathcal C\vert > 1$. Let $C$ be a maximal element of $\mathcal C_0$ under inclusion. Take $\mathcal C' = \mathcal C\setminus \{C_0\}$, $\mathcal C'' = \{C' \in \mathcal C \mid C'\subsetneq C_0\}$. The  increasing sequences in $\mathcal C$ are of three types: ending at $C_0$; ending at a proper subset of $C_0$; and ending at a set not contained in $C_0$.  Since $x\in \cap_{C\in \mathcal C} C$ and $\vert \mathcal C\vert > 1$, we have $\mathcal C'' \neq \emptyset$. Then 
$			S(\mathcal C) = \bigl(1-S(\mathcal C'') \bigr) + S(\mathcal C'')  + \bigl(S(\mathcal C')-S(\mathcal C'')\bigr) = 1,
$		by the induction hypothesis. 
	\end{proof}

	\begin{lemma}\label{Inclusion-Exclusion for edge sets} Let $\mathcal C$ be a finite family of linear subspaces in $\mathbb P^n$ closed under intersection.
		Let $D := \bigcup_{C\in \mathcal C} C$. In $K_0(Var_\bC)$ one has
$
[D] = \sum_{r = 0}^n \;\sum_{{C_0\subsetneq \ldots \subsetneq C_r, C_i\in \mathcal C}}  (-1)^r[C_0]. 
$
	\end{lemma}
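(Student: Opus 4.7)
My plan is to reduce this Grothendieck-ring identity to the pointwise identity of the preceding Lemma~\ref{Inclusion-Exclusion for arbitrary set} by means of a stratification of $D$ compatible with all members of $\mathcal{C}$. Since $\mathcal{C}$ is a finite family closed under intersection, for each $x\in D$ the intersection of all $C\in\mathcal{C}$ containing $x$ is itself a member of $\mathcal{C}$; call it $C(x)$. For each $C\in\mathcal{C}$ set
\[
S_C := C\setminus \bigcup_{C'\in\mathcal{C},\; C'\subsetneq C} C' = \{x\in D\mid C(x)=C\}.
\]
These are locally closed in $\mathbb{P}^n$, and I claim they give a disjoint decomposition $D=\bigsqcup_{C\in\mathcal{C}} S_C$, so in $K_0(Var_\bC)$ one has $[D]=\sum_{C\in\mathcal{C}}[S_C]$. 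Moreover, because any $C_0\in\mathcal{C}$ is covered by those $C'\in\mathcal{C}$ with $C'\subseteq C_0$ (using that $C(x)\subseteq C_0$ for $x\in C_0$), the same argument yields $[C_0]=\sum_{C'\in\mathcal{C},\,C'\subseteq C_0}[S_{C'}]$ in $K_0(Var_\bC)$.

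Substituting this expression for $[C_0]$ into the right-hand side of the claimed identity and swapping the order of summation gives
\[
\sum_{r\geq 0}\sum_{\substack{C_0\subsetneq\ldots\subsetneq C_r\\ C_i\in\mathcal{C}}}(-1)^r[C_0]\;=\;\sum_{C'\in\mathcal{C}}[S_{C'}]\left(\sum_{r\geq 0}\sum_{\substack{C'\subseteq C_0\subsetneq\ldots\subsetneq C_r\\ C_i\in\mathcal{C}}}(-1)^r\right).
\]
The inner parenthesized sum is the key step. Pick any $x\in S_{C'}$; since $C'=C(x)$ is the smallest element of $\mathcal{C}$ containing $x$ and $C_0\in\mathcal{C}$, the condition $C'\subseteq C_0$ is equivalent to $x\in C_0$. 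Hence the inner sum coincides with the evaluation at $x$ of the right-hand side of Lemma~\ref{Inclusion-Exclusion for arbitrary set}, applied with $B=\mathbb{P}^n$ and the same family $\mathcal{C}$. That lemma identifies this with $\mathbbm{1}_D(x)=1$, and so the inner sum equals $1$ for every $C'\in\mathcal{C}$.

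Plugging this in yields $\sum_r\sum(-1)^r[C_0]=\sum_{C'\in\mathcal{C}}[S_{C'}]=[D]$, which is the desired equality. The truncation $0\leq r\leq n$ is automatic, since a strictly increasing chain of linear subspaces of $\mathbb{P}^n$ has length at most $n+1$. I do not expect a real obstacle: the only points requiring verification are that $\{S_C\}_{C\in\mathcal{C}}$ genuinely partitions $D$ and that $C_0=\bigsqcup_{C'\subseteq C_0}S_{C'}$, both of which follow immediately from the definition of $C(x)$ and the fact that $\mathcal{C}$ is closed under intersection.
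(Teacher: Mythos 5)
Your proof is correct and is essentially the paper's argument: you stratify $D$ by the locally closed pieces $S_C$ (which coincide, by closure under intersection, with the paper's $C^\circ := C\setminus\bigcup_{C\not\subset C'}C'$), expand both $[D]$ and each $[C_0]$ over this stratification, and identify the resulting coefficient of $[S_{C'}]$ with the pointwise evaluation of Lemma~\ref{Inclusion-Exclusion for arbitrary set}, giving $1$. The only difference is presentational -- you spell out the substitution and swap of summation explicitly, whereas the paper compresses this into ``it suffices to show the contribution of each $[C^\circ]$ is $1$.''
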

	\begin{proof}
		 For $C \in \mathcal C$, let $C^\circ := C\setminus \cup_{C' \in \mathcal C, C\not\subset C'}  C'$. Then we have partitions
$			C = \sqcup_{C'\in \mathcal C, C'\subset C} C'^\circ
$ and $D=\sqcup_{C\in\mathcal C}C^\circ$. Therefore it suffices to show that the contribution of each $[C^\circ]$ to the right-hand side of the claimed equality is 1. This follows immediately from Lemma \ref{Inclusion-Exclusion for arbitrary set}.
	\end{proof}

	\begin{lemma}\label{lemCls}
		Let $A\subset V$ be a central hyperplane arrangement and let $\mathbb P(V)^{\mathcal S}$ be the canonical log resolution of $\bP(A)$. Then $[\mathbb P(V)^{\mathcal S}]$ is a polynomial in $ \bZ[\mathbb L]$ and
$			[\mathbb P(V)^{\mathcal S}] \equiv 1 (\mathrm{mod}\ \mathbb L).
$	\end{lemma}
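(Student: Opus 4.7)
The plan is to argue by induction on $n=\dim \bP(V)$, using the iterated blowup description of $\mu:\bP(V)^{\cS}\to \bP(V)$ together with the standard blowup formula
$$[\tilde Y]=[Y]+[Z]\bigl(\bL+\bL^2+\cdots +\bL^{c-1}\bigr)$$
for a smooth blowup $\tilde Y\to Y$ with smooth center $Z$ of codimension $c\ge 1$ in $K_0(Var_\bC)$. The base cases $n=0,1$ are immediate, since no blowup is required and $[\bP(V)^{\cS}]=[\bP^n]=1+\bL+\cdots +\bL^n\in\bZ[\bL]$ is $\equiv 1\pmod{\bL}$.

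For the inductive step, $\bP(V)^{\cS}$ is obtained from $\bP(V)\cong\bP^n$ by blowing up, in order of increasing dimension, the strict transforms of the $\bP(W)$ for $W\in\cS$. The steps with $W=V_i$ have codimension $1$ and thus contribute nothing, since the blowup is an isomorphism. For every other $W\in\cS$, the codimension of the center in the current ambient variety equals $\codim_{\bP(V)}\bP(W)\ge 2$, and the strict transform $\widetilde{\bP(W)}$ at the moment of its blowup is canonically isomorphic to the canonical log resolution $\bP(W)^{\cS^W}$ of the arrangement induced by $A$ on the subspace $W$. This identification is the key geometric input: by the chosen ordering, the earlier blowups that meet $\bP(W)$ non-trivially are precisely those along $\bP(W')$ with $W'\subsetneq W$, while for $W'\not\subset W$ the relevant intersection $W'\cap W\subsetneq W$ has already been blown up and separated out at a strictly earlier stage.

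Invoking the inductive hypothesis on the smaller-dimensional arrangement $A\cap W\subset W$, $[\widetilde{\bP(W)}]=[\bP(W)^{\cS^W}]$ lies in $\bZ[\bL]$ and is $\equiv 1\pmod{\bL}$. Since $c\ge 2$, its contribution $[\widetilde{\bP(W)}](\bL+\cdots +\bL^{c-1})$ to $[\bP(V)^{\cS}]$ lies in $\bL\cdot\bZ[\bL]$. Summing these contributions over all codim-$\ge 2$ blowup steps and adding $[\bP^n]=1+\bL+\cdots +\bL^n$, one obtains $[\bP(V)^{\cS}]\in \bZ[\bL]$ with constant term $1$. The main obstacle I anticipate is the careful bookkeeping needed to justify the identification $\widetilde{\bP(W)}\cong \bP(W)^{\cS^W}$: one must check that the blowups performed before reaching $\bP(W)$ restrict, on $\bP(W)$, exactly to the sequence of blowups defining its canonical log resolution---a routine but notation-heavy verification in the spirit of wonderful model constructions.
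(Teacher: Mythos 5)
Your proof is correct, but it takes a genuinely different decomposition than the paper. You work step-by-step through the iterated blowup, applying the formula $[\tilde Y]=[Y]+[Z](\bL+\cdots+\bL^{c-1})$ at each stage, and your key input is that the blowup \emph{center} at the step corresponding to $W\in\cS$ is a copy of the canonical log resolution $\bP(W)^{\cS^W}$ of the lower-dimensional induced arrangement (a fact one justifies by the separation of strict transforms of subspaces that already meet along a previously blown-up edge, in the spirit of Esnault--Schechtman--Viehweg). The paper instead decomposes $[\bP(V)^{\cS}]$ all at once, writing $[\bP(V)^{\cS}]=[\bP(V)]-[\bP(A)]+[E]$ and then using its Lemma on inclusion-exclusion over chains of linear subspaces together with the Budur--Saito product formula (Proposition 2.4) for $\cap_j E_{W_j}$; this makes the relevant lower-dimensional canonical log resolutions appear as factors of the \emph{intersections of exceptional divisors}, not as blowup centers. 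Both arguments then close the induction by noting that each lower-dimensional class is $\equiv 1\pmod{\bL}$. Your route is more elementary and self-contained (it sidesteps the two inclusion-exclusion lemmas), at the cost of the geometric verification about the strict transforms being canonical log resolutions — a verification the paper absorbs into its citation of the $E_I$ product formula, which carries essentially the same content. The codimension bookkeeping in your argument (codimension-$1$ steps trivial, other centers keep their original codimension $\ge 2$ after earlier blowups, so every contribution is divisible by $\bL$) is correct.
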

	\begin{proof}
		We proceed by induction. The claim is true for $\mathcal S = \emptyset$ and $n = 1$. Take $E = \mu^{-1}(\bP(A)) = \cup_{W \in \mathcal S} E_{W}$ with the notation as in \ref{subsection_canonical_log_resolution}. Then 
$			[\mathbb P(V)^{\mathcal S}] = [\mathbb P(V)]-[\bP(A)]+[E].
$		By inclusion-exclusion  and Lemma \ref{Inclusion-Exclusion for edge sets}, we have
		\begin{displaymath}\textstyle
			[\mathbb P(V)^{\mathcal S}] = [\mathbb P(V)]+\sum_{r = 0}^n\; \sum_{{W_0\subsetneq \ldots \subsetneq W_r, W_i\in \mathcal S}} (-1)^r \left([\cap_{j = 0}^r E_{W_j}]-[\bP(W_0)]\right).
		\end{displaymath}
		By Proposition \ref{propBS}, $\cap_{j = 0}^r E_{W_j}$ is a product of varieties of type $\bP(W_k)^{\mathcal S_k}$ with $\dim W_k < n$ and $\mathcal S_k$ the set of edges of a hyperplane arrangement in $W_k$. Hence $[\cap_{j = 0}^r E_{W_j}]$ is a polynomial in $\mathbb \bZ[\mathbb L]$ and $[\cap_{j = 0}^r E_{W_j}]  \equiv 1 (\mathrm{mod}\ \mathbb L)$ by the induction hypothesis.  
				Since $[\mathbb P(V)] \equiv 1 (\mathrm{mod}\ \mathbb L)$, we are done.
	\end{proof}

\begin{proof}[Proof of Proposition \ref{prop1}]
Recall $\mu : X \to \mathbb P^n=\bP(V)$ is the canonical log resolution of hyperplane arrangement $\mathcal A$, $\mathrm{div}\,(\omega^{1/q}) = \sum_{W\in \mathcal S} (a_W-1)E_W$ as a divisor on $X$, and $a_i := a_{V_i}$ for a hyperplane $V_i \in \mathcal S$. Let $\tilde\omega^{1/q}$ be the multivalued rational $n$-form $\omega^{1/q}$ viewed on $\bP(V)$, so that $\omega^{1/q}=\mu^*\tilde\omega^{1/q}$.
Then $
		\divi( \tilde \omega^{1/q}) = \sum_{i = 1}^d (a_i-1)\;  \mathbb P(V_i)
	$ is given just by the hyperplanes in the arrangement. We have $\divi( \tilde \omega^{1/q}) \sim_\bQ K_{\bP(V)}$ and $\divi(\omega^{1/q})\sim_\bQ K_{X}=K_\mu+\mu^*K_{\bP(V)}$ where $K_\mu$ is the relative canonical divisor. Moreover, after multiplication with $q$ both $\bQ$-linear equivalences become $\bZ$-linear equivalences. By definition of the canonical log resolution, $K_\mu=\sum_{W\in\mathcal S}(\codim \,W -1)E_W$ and $\mu^*K_{\bP(V)}=\sum_{W\in \mathcal S}(\sum_{i:W\subset V_i}(a_i-1))E_W$.
 Hence $\divi(\omega^{1/q})= \sum_{W\in \mathcal S} (b_W-1) E_W$ with $b_W$ as in the statement of the proposition. In particular, $a_W = b_W$ for all $W\in \mathcal S$. Hence $\omega^{1/q}$ has no logarithmic poles if and only if $b_W\neq 0$ for all $W\in\cS$. 

 By Remark \ref{rmkClo}, 
 $$\textstyle\mathbb L^n \cdot \PV\int_{\mathbb P(V)^{\mathcal S}} \omega^{1/q}= [\mathbb P(V)^{\mathcal S}]+\sum_{r \geq 0} \;\sum_{{W_0\subsetneq \ldots \subsetneq W_r, W_i \in \mathcal S}} [\cap_{i =0}^r E_{W_i}] \prod_{i = 0}^r  \left(\frac{\mathbb L-1}{\mathbb L^{b_{W_i}}-1}- 1\right),$$ 
 which lies in $\bZ\llbracket \bL^{1/q}\rrbracket$  by Corollary \ref{corDefRing}. Hence the constant term of this  series  as a formal power series in $\bL^{1/q}$ is well-defined. We denote it by  $\Delta(\mathcal S, \bm a)\in\bZ$. Here $\bm a=(a_1,\ldots, a_d)$. 
	By Proposition \ref{propBS} and Lemma \ref{lemCls},  $[\cap_{i =0}^r E_{W_i}]$ and  $[\mathbb P(V)^{\mathcal S}]$ are  polynomials in $\mathbb L$  with residue class $1$ modulo $\mathbb L$. Hence $\Delta(\mathcal S, \bm a)$ is the constant term of 
		$$\textstyle
			 1+\sum_{r \geq 0}\; \sum_{{W_0\subsetneq \ldots \subsetneq W_r, W_i \in \mathcal S}} \; \prod_{i = 0}^r\left(\frac{\mathbb L-1}{\mathbb L^{b_{W_i}}-1}- 1\right).
		$$
		
		By \eqref{eq*} the leading term of $\frac{\mathbb L-1}{\mathbb L^{u}-1}-1$ is $-1$, $\mathbb L^u$, $0$, $-\mathbb L$ if $u$ is in $(-\infty,0)$, $(0,1)$, $\{1\}$, $(1,\infty)$, respectively. Thus the constant term of $\frac{\mathbb L-1}{\mathbb L^{u}-1}-1$ is  $\lambda(u)$, where $
			\lambda : \mathbb R\setminus\{0\} \to \{-1,0\}$ sends $u$ to 
				$-1$ if  $u < 0$, and to $0$ if $u > 0$.
	Thus we obtain that
$		\Delta(\mathcal S, \bm a) = 1+\sum_{r \geq 0}\; \sum_{{W_0\subsetneq \ldots \subsetneq W_r, W_i \in \mathcal S}} \lambda(b_{W_0})\ldots\lambda(b_{W_r}),
$	
which is equivalent to our claim. 
\end{proof}

In the rest of this subsection, we will produce a tuple $\bm a = (a_1,\ldots,a_d) \in \mathbb (\mathbb Q\setminus \mathbb Z)^d$ such that $\Delta(\mathcal S, \bm a) = 1+\sum_{r \geq 0}\; \sum_{{W_0\subsetneq \ldots \subsetneq W_r, W_i \in \mathcal S}} \lambda(b_{W_0})\ldots\lambda(b_{W_r}) \neq 0$. In particular, for $\tilde \omega^{1/q}$ satisfying $\divi( \tilde \omega^{1/q}) = \sum_{i = 1}^d (a_i-1)\;  \mathbb P(V_i)$ and $\omega^{1/q}=\mu^*\tilde\omega^{1/q}$, the integral $\PV\int_{\mathbb P(V)^{\mathcal S}} \omega^{1/q}$ is non-zero. Indeed, the chosen tuple $\bm a$ will ensure that every $b_W = \codim W+\sum_{W\subset V_i}(a_i-1)$ is positive, and consequently $\lambda(b_W) = 0$ for all $W \in \mathcal S$.

We start with a  characterization of the structure of a central essential indecomposable hyperplane arrangement.

	
	\begin{lemma}\label{a_good_subhyperplane_arrangement}
	Suppose $A$ is essential and indecomposable (so, in particular, $d \geq n+2$), then:
	\begin{enumerate}
	\item After a permutation of indices, we have $V_{i+1} = \{x_i = 0\} \subset A$ for some coordinate system $x_0,\ldots, x_n$ on $\bC^{n+1}$.
	\item Among the rest of the hyperplanes in $A$, there are some hyperplanes $C_1,...,C_r $, $C_i = \{\sum_{j = 0}^n\iota_{ij} x_j = 0\}$, such that, if $O_i := \{j  \in\{0,\ldots,n\}\mid \iota_{ij} \neq 0\}$, then $O_i \cap (\cup_{j=1}^{i-1}O_j) \neq \emptyset$ for $i>1$, and $\cup_{i=1}^r O_i = \{0,\ldots,n\}$. 
	\end{enumerate}
	In particular, for $i=1,\ldots,r$ let $\zeta_i := \vert O_i \setminus \cup_{j=1}^{i-1} O_j \vert$, then $\sum_{i = 1}^r \zeta_i = n+1$.
\end{lemma}
\begin{proof}
	Since $A$ is essential, we may assume $V_1 \cap \ldots \cap V_{n+1} = \{\bm 0\}$ after a permutation of indices. This yields (1). For $1\le i\le d-n-1$ write $V_{n+1+i} = \{\sum_{j = 0}^n z_{ij} x_j = 0\}$ with $z_{ij}\in\bC$, and let $J_i = \{j \in\{0,\ldots,n\}\mid z_{ij} \neq 0\}$. One  picks $C_i$ by the following algorithm:
	
	$\bullet$ Pick $C_1 = V_{n+2}$. Then $O_1 = J_1$. Let $G := \{2,...,d-n-1\}$, $H := J_1$, $l: = 1$.
	
	$\bullet$ Suppose now $l = l_0$. If $H = \{0,...,n\}$, stop. Otherwise pick some $i\in G$ such that $J_i \cap H \neq \emptyset$ and $J_i \setminus H \neq \emptyset$. Update $G$ to be $G\setminus \{i\}$, $H$ to be $H \cup J_i$, $C_{l_0+1} := V_{n+1+i}$, $O_{l_0+1} := J_i$, and $l$ to be $l_0+1$. Run this step again.

	Indeed, the $i \in G$ in the second step always exists since $A$ is indecomposable.  
	Otherwise, the sets $$\textstyle \{V_{i+n+1} \mid i \in G\} \cup \{V_{i+1} \mid i \in \{0,\ldots,n\} \setminus H\} \text{ and } \{V_{i+n+1} \mid i \in \{2,\dots,d-n-1\}\setminus G\} \cup \{V_{i+1} \mid i\in H\}$$ would give a decomposition of $A$ into the zero loci of two polynomials in disjoint sets of variables: \(\{x_j \mid j \in \{0,\dots,n\}\setminus H\}\) and \(\{x_j \mid j \in H\}\), respectively.
\end{proof}
\begin{example}
	Suppose $A = \{x_0x_1x_2x_3(x_0+x_1)(x_2+x_3)(x_1+x_2) = 0\} = \cup_{i=1}^7 V_i \subset \mathbb C^3$, then the steps of the algorithm described above applied to $A$ are recorded below.
	\begin{enumerate}
		\item $l = 1$. Then $C_1 = \{x_0+x_1 = 0\} = 0$, $G = \{2,3\}$, $H = \{0,1\}$, and $\zeta_1 = 2$.
		
		\item $l = 2$. Then one picks $i = 3$ since $J_2 = \{2,3\}$ does not intersect $H$. Data $(G,H,C_2,O_2,\zeta_2)$ are updated to be $(\{2\},\{0,1,2\},\{x_1+x_2 = 0\},\{1,2\},1)$.
		
		\item $l = 3$. Then $i = 2$ is picked and $(G,H,C_3,O_3,\zeta_3)$ are updated to be $(\emptyset,\{0,1,2,3\},\{x_2+x_3 = 0\},\{2,3\},1)$. The algorithm stops here.
	\end{enumerate}
\end{example}

The following lemma is crucial in establishing the positivity of $b_W$.
\begin{lemma}\label{good_numericial_condition}
	With notation  as in Lemma \ref{a_good_subhyperplane_arrangement},
		 for $i=0,\ldots,n$ and $j = 1,\ldots,r$,  let $n_i = \#\{k \in \{1,\ldots,r\}\mid i\in O_k  \}$ and $m_j = \vert O_j\vert$, then $\sum n_i = \sum m_j$.
 Suppose $I \subset \{0,\ldots,n\}$, $J \subset \{1,\ldots,r\}$, $I\cup J \neq \emptyset$. Let $W = (\cap_{i\in I} V_{i+1}) \cap (\cap_{j\in J} C_j)$. Assume $W \neq \{\bm 0\}$ and $0 < \delta \ll 1$. Then
$$\textstyle		\mathrm{codim}\, W - \left(\vert I \vert \cdot \frac{n+1}{n+2} + \sum_{j\in J} \frac{\zeta_j}{n+2} \right) + \delta(\sum_{i\in I} n_i-\sum_{j\in J} m_j) > 0.$$	
\end{lemma}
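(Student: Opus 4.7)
The plan is to first compute $\codim W$ explicitly. Setting $K := \{0,\ldots,n\}\setminus I$, after imposing $x_i=0$ for $i\in I$ the defining form of $C_j$ becomes $\sum_{k\in K}\iota_{jk} x_k$, so $\codim W = |I| + \rho$ where $\rho := \mathrm{rank}(\iota_{jk})_{j\in J,\,k\in K}$. Multiplying the target inequality by $n+2$ and substituting, its $\delta=0$ part equals $|I| + \rho(n+2) - \sum_{j\in J}\zeta_j$, and I would split the argument according to whether $\rho\geq 1$ or $\rho=0$.

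In the first case, the bound $\sum_{j\in J}\zeta_j \leq \sum_{j=1}^r\zeta_j = n+1$ from Lemma \ref{a_good_subhyperplane_arrangement} makes the $\delta=0$ part $\geq |I|+1\geq 1$; since $\bigl|\sum_{i\in I} n_i-\sum_{j\in J} m_j\bigr|$ is bounded by a quantity depending only on $A$, choosing $\delta$ small enough forces positivity. When $\rho=0$, i.e.\ $O_j\subset I$ for all $j\in J$, enumerate $J=\{j_1<\ldots<j_s\}$; since $\cup_{k<j_t}O_k \supseteq \cup_{u<t} O_{j_u}$, we have $\zeta_{j_t}\leq |O_{j_t}\setminus \cup_{u<t}O_{j_u}|$, and telescoping yields $\sum_{j\in J}\zeta_j \leq |\bigcup_{j\in J} O_j| \leq |I|$. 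If either inequality is strict, the $\delta=0$ part is $\geq 1$ and we conclude as in the first case.

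The main obstacle is the boundary sub-case $\rho=0$ and $|I|=\sum_{j\in J}\zeta_j$, where the $\delta=0$ part vanishes and the $\delta$-coefficient must supply the strict inequality. Here both equalities above are saturated, forcing $I=\bigcup_{j\in J}O_j$. Double-counting pairs $(i,k)$ with $i\in I\cap O_k$ rewrites
\[
\textstyle\sum_{i\in I}n_i - \sum_{j\in J}m_j \;=\; \sum_{k\in\{1,\ldots,r\}\setminus J} |O_k \cap I|,
\]
and I would show the right-hand side is positive. Indeed $\rho=0$ and $W\neq\{\bm 0\}$ force $|I|=\codim W \leq n$, so $I\subsetneq\{0,\ldots,n\}$; and $J\neq\emptyset$ (otherwise $|I|=0$ would contradict $I\cup J\neq\emptyset$), so $I\neq\emptyset$. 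Applying indecomposability of $A$ to the partition $\{0,\ldots,n\}=I\sqcup (\{0,\ldots,n\}\setminus I)$ produces some $C_k$ whose support $O_k$ meets both parts; such $k$ satisfies $O_k\not\subset I$, hence $k\notin J$, while $O_k\cap I\neq\emptyset$, making the sum $\geq 1$. Thus indecomposability is precisely the structural input that rescues the equality case.
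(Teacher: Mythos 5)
Your overall strategy mirrors the paper's: express $\codim W$ in terms of how much the $C_j$'s drop the codimension beyond $|I|$ (your $\rho$ plays the role of the paper's $|J|-e'$), split according to whether the ``$\delta=0$ part'' is already positive or exactly zero, and in the degenerate boundary case reduce to showing $\sum_{i\in I}n_i-\sum_{j\in J}m_j>0$, which you correctly rewrite as $\sum_{k\in\{1,\ldots,r\}\setminus J}|O_k\cap I|$.

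The gap is in your last step. You assert that ``applying indecomposability of $A$ to the partition $\{0,\ldots,n\}=I\sqcup I^c$ produces some $C_k$ whose support $O_k$ meets both parts.'' Indecomposability of $A$ only produces some hyperplane of $A$ with defining form using variables from both parts; since the coordinate hyperplanes $V_{i+1}$ cannot do this, that hyperplane is some non-coordinate $V_m$. But there is no reason for this $V_m$ to be one of the $C_1,\ldots,C_r$: the greedy algorithm of Lemma \ref{a_good_subhyperplane_arrangement} terminates once $H=\{0,\ldots,n\}$ and discards the remaining hyperplanes of $A$, so generically most non-coordinate hyperplanes of $A$ are not selected as $C_j$'s. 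Indecomposability of the full arrangement $A$ therefore does not by itself supply the needed $k\notin J$ with $O_k\cap I\neq\emptyset$.

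What actually rescues the boundary case is the \emph{chain structure} built into Lemma \ref{a_good_subhyperplane_arrangement}, namely $0<\zeta_j<|O_j|$ for $j\geq 2$, which forces $O_j\cap\bigl(\cup_{l<j}O_l\bigr)\neq\emptyset$ for every $j\geq 2$. Since $I=\cup_{j\in J}O_j\subsetneq\{0,\ldots,n\}=\cup_{j=1}^rO_j$, one has $J\subsetneq\{1,\ldots,r\}$, and both $J$ and its complement are nonempty. If $1\in J$, take $k_0=\min\bigl(\{1,\ldots,r\}\setminus J\bigr)\geq 2$; then $\cup_{l<k_0}O_l\subset I$ and $O_{k_0}$ meets it, so $O_{k_0}\cap I\neq\emptyset$ with $k_0\notin J$. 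If $1\notin J$, take $j_0=\min J\geq 2$; then $\cup_{l<j_0}O_l$ lies entirely in indices $l\notin J$, and $O_{j_0}\subset I$ meets this union, giving some $l<j_0$, $l\notin J$, with $O_l\cap I\neq\emptyset$. Either way the sum is at least $1$. This is the content of the paper's terse remark that ``since $W\neq\{\bm 0\}$, there must be some contributions to $\sum_{i\in I}n_i$ from $O_{j'}$ with $j'\notin J$.'' Your proof would be complete if you replaced the appeal to indecomposability of $A$ by this use of the overlap property of the $O_j$'s.
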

\begin{proof}  
	Let $e = \mathrm{codim}\, W$, $e' = \vert I\vert+\vert J\vert-e$. Since $\mathrm{codim}(\cap_{i\in I}V_{i+1}) = \vert I \vert$, we have $e' \leq \vert J \vert$. 
	Moreover, $e' = \vert J\vert$ if and only if $e = \vert I\vert$, that is, $\mathrm{span}_{\mathbb C} \{{C}_j^\vee\}_{j\in J} \subset \mathrm{span}_{\mathbb C} \{{V}^\vee_{i+1}\}_{i\in I}$, where ${(\_)^\vee} \subset V^\vee$ means the dual line of a hyperplane. Equivalently, $\bigcup_{j\in J} O_j \subset I$. It suffices to show
$$\textstyle		\frac{\vert I\vert}{n+2} + \vert J\vert-e' + \delta(\sum_{i\in I} n_i-\sum_{j\in J} m_j) > \frac{\sum_{j\in J} \zeta_j}{n+2}.
$$	
If $e' < \vert J\vert$, since $1 > \frac{n+1}{n+2} = \frac{\sum_{j=1}^r \zeta_j}{n+2} \geq \frac{\sum_{j\in J} \zeta_j}{n+2}$ and $0 < \delta \ll 1$, we are done. Otherwise, suppose $e' < \vert J\vert$, then $\sum_{j\in J}\zeta_j \leq \vert\cup_{j\in J} O_j\vert  \leq \vert I\vert$. If $\sum_{j\in J}\zeta_j < \vert I\vert$, we are done since $0 < \delta \ll 1$. Otherwise, beacause $\sum_{j\in J} \zeta_j \leq \vert \cup_{j\in J} O_j\vert$, we conclude that $\cup_{j\in J} O_j = I$. Then it suffices that $\sum_{i\in I} n_i - \sum_{j\in J} m_j > 0$. Note that $m_j$ contributes to each $n_i$ with $i\in O_j \subset I$ by 1. Hence $\sum_{i\in I} n_i - \sum_{j\in J} m_j \geq 0$. Note that $\cup_{j\in J} O_j \subset I \subsetneq \{0,\ldots,n\}$, since $W \neq \{\bm 0\}$. Thus $J \neq \{1,\ldots,r\}$ for $\cup_{j=1}^r O_j = \{0,\ldots,n\}$. Let $j_0 = \min_{j \notin J}\{j\}$. If $j_0 = 1$, then $2\in J$ and $O_2 \cap O_1 \neq \emptyset$. If $j_0 > 1$, $1,\ldots,j_0-1 \in J$ and $(\cup_{j=1}^{j_0-1} O_j) \cap O_{j_0} \neq \emptyset$. In both cases, $O_{j_0}\cap I \supset O_{j_0} \cap (\bigcup_{j\in J} O_j) \subset O_{j_0} \neq \emptyset$. Consequently, $m_{j_0}$ also contributes positively to $\sum_{i\in I} n_i$, which yields the strict inequality.
\end{proof}
\begin{example}
	Again take $A = \{x_0x_1x_2x_3(x_0+x_1)(x_2+x_3)(x_1+x_2) = 0\} = \cup_{i=1}^7 V_i \subset \mathbb C^3$. Let $I = \{0,1,2\}$ and $J = \{1,2\}$. Recall $C_1 = \{x_0+x_1 = 0\} = V_5$ and $C_2 = \{x_1+x_2 = 0\} = V_7$. Then $W = \{x_0=x_1=x_2 = 0\}$ and $(n_0,n_1,n_2,m_1,m_2,\zeta_1,\zeta_2) = (1,2,2,2,2,2,1)$. Then
	\begin{displaymath}\textstyle
		\mathrm{codim}\, W - \left(\vert I \vert \cdot \frac{n+1}{n+2} + \sum_{j\in J} \frac{\zeta_j}{n+2} \right) + \delta(\sum_{i\in I} n_i-\sum_{j\in J} m_j) = 3-( \frac{3\cdot4}{5}+\frac{2+1}{5})+\delta(5-4) = \delta>0.
	\end{displaymath}
\end{example}

\begin{lemma}\label{Genericall_non_vanishing_1}
	Suppose that the central arrangement $A$ is essential and indecomposable. Then there is $\bm a\in(\bQ\setminus \bZ)^d$ satisfying $\sum_{i=1}^da_i=d-n-1$ such that $\Delta(\mathcal S,\bm a) \neq 0$. 
\end{lemma}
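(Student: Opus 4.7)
The plan is to exhibit a rational $\bm a$ for which \emph{every} $W \in \cS$ satisfies $b_W > 0$. Proposition \ref{prop1} then immediately yields $\Delta(\cS, \bm a) = 1 \neq 0$, since no chain $W_0 \subsetneq \cdots \subsetneq W_r$ with all $b_{W_i} < 0$ can exist.

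To build such $\bm a$, I would first apply Lemma \ref{a_good_subhyperplane_arrangement}: fix coordinates with $V_{i+1} = \{x_i = 0\}$ for $i = 0, \ldots, n$ and pick the subfamily $C_1, \ldots, C_r$ of the remaining hyperplanes together with the associated integers $n_i$, $m_j$, $\zeta_j$. Then fix a small positive rational $\delta$ as in Lemma \ref{good_numericial_condition} and set
\begin{align*}
a_{V_{i+1}} &:= \tfrac{1}{n+2} + \delta n_i \quad (i = 0, \ldots, n),\\
a_{C_j} &:= 1 - \tfrac{\zeta_j}{n+2} - \delta m_j \quad (j = 1, \ldots, r),
\end{align*}
and $a_V := 1$ for every remaining hyperplane $V$ of $A$. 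Using $\sum_j \zeta_j = n+1$ and $\sum_i n_i = \sum_j m_j$, one checks that $\sum_V a_V = d - n - 1$.

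To verify $b_W > 0$ for $W \in \cS$, let $I = \{i : V_{i+1} \supset W\}$ and $J = \{j : C_j \supset W\}$; every other hyperplane $V \supset W$ contributes $a_V - 1 = 0$ to $b_W$. If $I \cup J = \emptyset$ then $b_W = \codim W \geq 1$ and we are done. Otherwise $W \subseteq \tilde W := (\cap_{i \in I} V_{i+1}) \cap (\cap_{j \in J} C_j)$ with $\tilde W \neq \{\bm 0\}$, so $\codim W \geq \codim \tilde W$, and the identity
\[
b_W \;=\; \codim W - |I|\tfrac{n+1}{n+2} - \sum_{j \in J} \tfrac{\zeta_j}{n+2} + \delta\bigl(\textstyle\sum_{i \in I} n_i - \sum_{j \in J} m_j\bigr)
\]
combined with Lemma \ref{good_numericial_condition} applied to $\tilde W$ yields $b_W > 0$. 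Hence $\Delta(\cS, \bm a) = 1$.

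The last step is to upgrade $\bm a \in \bQ^d$ to $(\bQ \setminus \bZ)^d$: the finite system $\{b_W > 0 : W \in \cS\}$ defines an open neighborhood of $\bm a$ in the rational affine hyperplane $H := \{\sum a_i = d-n-1\} \subset \bR^d$, while the non-integrality conditions $a_i \notin \bZ$ remove only countably many proper affine subspaces from $H$; since $\bQ^d \cap H$ is dense in $H$, a sufficiently small rational perturbation stays inside the open set and satisfies every non-integrality condition, as required. The substantive obstruction is already encapsulated in Lemma \ref{good_numericial_condition}; the only design step needing foresight is to tune the exponents $\tfrac{n+1}{n+2}$ and $\tfrac{\zeta_j}{n+2}$ so that $b_W$ reproduces, on the nose, the left-hand side of that lemma.
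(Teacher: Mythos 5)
Your proposal is correct and follows essentially the same strategy as the paper: fix coordinates and the subfamily $C_1,\dots,C_r$ via Lemma \ref{a_good_subhyperplane_arrangement}, choose $\bm a$ so that $b_W$ reproduces the quantity bounded in Lemma \ref{good_numericial_condition}, deduce $b_W>0$ for all $W$, and conclude $\Delta(\cS,\bm a)=1$. The only variation is cosmetic: the paper bakes non-integrality into $\bm a$ from the start by adding an auxiliary $\varepsilon$-perturbation (with $0<\varepsilon\ll\delta$) to every coordinate, whereas you set the non-special coordinates to the integer $1$ and restore $\bm a\in(\bQ\setminus\bZ)^d$ afterwards by a small rational perturbation inside the open cone $\{b_W>0\}$ -- both handle the same bookkeeping and are equally valid.
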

\begin{proof} Here $\Delta(\cS,\bm a)$ is the constant term as in the proof of Proposition \ref{prop1}.
	We assume $V_i$ and $C_j$ are as in Lemma \ref{a_good_subhyperplane_arrangement}, and $n_i, m_j$ as in Lemma \ref{good_numericial_condition}. Let $0 < \varepsilon \ll \delta \ll 1$. Let $r' = n+1+r$ and $d' = d-n-1-r$. Define $a_k$ by
	\begin{displaymath}
		a_k - 1 = 
		\begin{cases}
			-\frac{n+1}{n+2} + n_{k-1}\delta-d'\varepsilon, & 1\le k\le n+1,\\
			-\frac{\zeta_j}{n+2}-m_j\delta-d'\varepsilon, & V_k = C_j,\\
			r'\varepsilon, & \mathrm{otherwise}.
		\end{cases}
	\end{displaymath} 
	For an edge $W\in \mathcal S$, let $I_W := \{i\in\{0,\ldots,n\} \mid W \subset V_{i+1}\}$ and $J_W := \{j\in\{1,\ldots, r\} \mid W \subset C_j\}$. Take $M =(d+n+2)^2 4^{\vert \mathcal S\vert}$ a large integer.	If $I_W\cup J_W = \emptyset$, then $b_W \geq 1-M\cdot \varepsilon > 0$. Otherwise, let $W' = (\cap_{i\in I_W} V_{i+1}) \cap (\cap_{j\in J_W} C_j)$, then $\mathrm{codim}\, W' \leq \mathrm{codim}\, W$. We have
$$\textstyle		b_W \geq \mathrm{codim}\, W'  - (\vert I_W \vert \cdot \frac{n+1}{n+2} + \sum_{j\in J_W} \frac{\zeta_j}{n+2} ) + \delta(\sum_{i\in I_W} n_i-\sum_{j\in J_W} m_j) - M\varepsilon.
$$	By Lemma \ref{good_numericial_condition}, taking $\varepsilon \ll \delta$ gives that $b_W > 0$ for all $W \in \mathcal S$. Thus $\Delta(\mathcal S,\bm a) = 1 > 0$.
\end{proof}

\begin{remark} The last three lemmas give a new characterization of the structure of a central essential indecomposable hyperplane arrangement that is of independent interest. This characterization was used as an essential ingredient for the main result of \cite{XY} for example.
\end{remark}

	\subsection{Generic forms}\label{subsection_formalization} In this part we prove Theorem \ref{prop2} and Corollary \ref{propC2}.
	First we develop a formalism to deal with the motivic principal value integrals for hyperplane arrangements.
Let $s_1,\ldots, s_d$ be formal symbols. Define the free abelian group
$$\textstyle		M := (\mathbb Z \oplus \bigoplus_{i=1}^d \mathbb Zs_i) / \mathbb Z\cdot (n+1+\textstyle\sum_{i=1}^d s_i).
$$	By an integer $k\in M$ we mean $k\cdot (1,0,\ldots,0)\in M$ for $k\in\bZ$; this defines an inclusion $\mathbb Z\subset M$.
	Define the ring
	\begin{displaymath}
		R = \mathbb C[\mathbb L^M] := \mathbb C[\mathbb L^{\pm 1},\mathbb L^{\pm s_1},\ldots,\mathbb L^{\pm s_d}]/(\mathbb L^{n+1+s_1+\ldots+s_d}-1).
	\end{displaymath}
	It is isomorphic to
$		\mathbb C[u^{\pm 1},v_1^{\pm 1},\ldots,v_d^{\pm 1}]/(u^{n+1}v_1\ldots v_d-1) \simeq \mathbb C[u^{\pm 1},v_1^{\pm 1},\ldots ,v_{d-1}^{\pm 1}]
$	via $\mathbb L \mapsto u$, $\mathbb L^{s_i} \mapsto v_i$. A monomial in $R$ can be written uniquely as $\mathbb L^m$ for some $m\in M$.

Every element $m\in M$ gives a function  $\bQ^d\cap\{n+1+\textstyle\sum_{i=1}^ds_i=0\}\to \bQ$ by specialization.
		This function 
	 is determined by its specializations $m(\bm u)\in\bQ$ to $d$ arbitrary  $\bQ$-linearly independent vectors $\bm u\in\bQ^d$ such that $\bm u\in\{n+1+\textstyle\sum_{i=1}^ds_i=0\}$.
 To an edge $W \in \mathcal S$ we associate the element $$c_W:=\mathrm{codim}\, W + \textstyle\sum_{W\subset V_i} s_i$$ in $M.$ 
 Let $K :=\cap_{i=1}^d V_i$.
The following is elementary:

	\begin{lemma}\label{coefficients of si pm1}
		For all $W, W'\in \mathcal S$:
		\begin{enumerate} 
				\item There exist $\lambda_1,\ldots,\lambda_d \in \{0,1\}$ and $\lambda_0\in \mathbb Z_{>0}$ such that $c_W = \lambda_0+\sum_{i = 1}^d \lambda_i s_i$. 
		
		\item If $W = K$, $c_W = -\dim K < 0$. If $W \neq K$, then $c_W\in M\setminus\mathbb Z$ and $\{1, c_W\}$ can be extended to a basis of $M$. In particular, $(\mathbb L^{c_{W}}-1)$ is a prime ideal of $R$.
		
		\item  $c_{W}$ and $c_{W'}$ are $\bZ$-linearly dependent if and only if $c_W = \pm c_{W'}$.
		
		\item If  $W\subsetneq W'$, then $c_W$ and $c_{W'}$ are $\mathbb Z$-linearly independent.
		\end{enumerate}
	\end{lemma}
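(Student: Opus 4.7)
The plan is to base everything on the key observation that if $J_W := \{i \in \{1,\ldots,d\} : W \subset V_i\}$, then $W = \cap_{i \in J_W} V_i$ is recovered from $J_W$, and the natural representative of $c_W$ in $\bZ \oplus \bigoplus_i \bZ s_i$ is $\codim W + \sum_{i \in J_W} s_i$. All four statements will then be reduced to combinatorial manipulations on indicator vectors $\chi_{J_W} \in \{0,1\}^d$, modulo the single defining relation $\sum_i s_i \equiv -(n+1)$ in $M$.

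Part (1) is immediate from this natural representative: since $W \in \cS$ is a proper linear subspace of $V$, $\codim W \geq 1$, and $\lambda_i = \chi_{J_W}(i) \in \{0,1\}$. For part (2), the case $W = K$ uses $\sum s_i = -(n+1)$ directly to give $c_K = \codim K - (n+1) = -\dim K$. For $W \neq K$, $J_W \neq \{1,\ldots,d\}$, so I would pick some $j \notin J_W$ and use the basis $\{\bar s_i\}_{i \neq j}$ of $M/\bZ$; in this basis $\bar c_W$ reads as the indicator $\chi_{J_W}$, which has at least one coordinate equal to $1$ and is therefore primitive in $M/\bZ$. This guarantees that $\{1, c_W\}$ extends to a $\bZ$-basis of $M$, and the primality of $(\bL^{c_W}-1)$ in $R$ then follows because the resulting monomial change of variables identifies $R$ with a Laurent polynomial ring over $\bC$ in which $\bL^{c_W}-1$ becomes $x-1$ for one of the variables $x$; the quotient is again a Laurent polynomial ring, hence an integral domain.

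For parts (3) and (4), the key input is the primitivity just proved. When $W, W' \neq K$, any $\bZ$-relation $a c_W + b c_{W'} = 0$ must have $a, b \neq 0$ (otherwise torsion-freeness of $M$ forces $c_W = 0$ or $c_{W'} = 0$, which by part (2) means $W$ or $W'$ equals $K$); projecting to $M/\bZ$ and using primitivity of both $\bar c_W$ and $\bar c_{W'}$ then forces $a = \pm b$ and hence $c_W = \pm c_{W'}$ in $M$. Unwrapping each sign shows the ``$+$'' case reduces modulo the diagonal vector $(1,\ldots,1)$ to $\chi_{J_W} = \chi_{J_{W'}}$, giving $J_W = J_{W'}$ and so $W = W'$, while the ``$-$'' case forces $J_W$ and $J_{W'}$ to partition $\{1,\ldots,d\}$. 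Cases with $W = K$ or $W' = K$ are handled separately via part (2). For (4), $W \subsetneq W'$ yields $J_{W'} \subsetneq J_W$, which is incompatible with $J_W = J_{W'}$, and, since $J_{W'} \neq \emptyset$, also incompatible with $J_W \cap J_{W'} = \emptyset$, so neither sign case can occur.

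The main obstacle I expect is the careful bookkeeping in part (3) when passing between $M$ and $M/\bZ$ while tracking the defining relation $n+1+\sum s_i = 0$: one must not lose a possible integer summand when reading off ``$c_W = -c_{W'}$'' from ``$\bar c_W = -\bar c_{W'}$'', and it is precisely the integrality of the coefficients $a,b$ together with primitivity of both images in $M/\bZ$ that forces the sign equality to lift to $M$ exactly (with the numerical identity $\codim W + \codim W' = n+1$ appearing as an automatic consequence rather than an extra combinatorial fact to verify).
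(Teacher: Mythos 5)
The paper labels this lemma ``elementary'' and gives no proof, so there is nothing to compare against; your proof is correct and is the natural argument. All the key steps check out: the natural representative $\codim W + \sum_{i\in J_W}s_i$ with $\codim W\geq 1$ gives (1); the relation $\sum_i s_i\equiv -(n+1)$ gives $c_K=-\dim K$; for $W\neq K$ choosing $j\notin J_W$ exhibits $\bar c_W$ as a nonzero $\{0,1\}$-vector in the free basis $\{\bar s_i\}_{i\neq j}$ of $M/\bZ$, hence primitive, which is exactly the condition for $\{1,c_W\}$ to extend to a basis of $M$, and the monomial change of basis makes $R/(\bL^{c_W}-1)$ a Laurent polynomial ring. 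For (3)--(4), since the relation $ac_W+bc_{W'}=0$ already lives in $M$, once content considerations in $M/\bZ$ force $a=\pm b$ you may divide in the torsion-free module $M$ to get $c_W=\mp c_{W'}$ directly, so the ``lifting'' concern you flag at the end is a non-issue; and your observation that $W\subsetneq W'$ forces $J_{W'}\subsetneq J_W$ with $J_{W'}\neq\emptyset$ correctly rules out both sign cases in (4).
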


	 In the fraction field of $R$ define
	$$
		\mathcal F_{\mathcal S} 
	:=   \sum_{I\subset \mathcal S} [E_I^\circ] \prod_{W \in I}  \frac{\mathbb L-1}{\mathbb L^{c_{W}}-1} = [(\mathbb P^n)^\mathcal S]+\sum_{r \geq 0} \;\sum_{{W_0\subsetneq \ldots \subsetneq W_r, W_i \in \mathcal S}} [\cap_{j=0}^rE_{W_j}] \prod_{i = 0}^r  \left(\frac{\mathbb L-1}{\mathbb L^{c_{W_i}}-1}- 1\right),
	$$
	since by Proposition \ref{propBS}, in the first sum it suffices to consider only  $I=\{W_1,\ldots , W_{|I|}\}\subset \cS$ with $W_1\subsetneq \ldots\subsetneq W_{|I|}$, and recall that $[(\mathbb P^n)^{\mathcal S}]$ is  $[E_{\emptyset}^\circ]$ in our notation.
	
			\begin{remark}\label{rmkSpP}
		If we specialize $\bm s$ to  $\bm u\in\bQ^d\cap\{n+1+\textstyle\sum_{i=1}^ds_i=0\}$, then $\mathcal F_{\mathcal S}$ is specialized to  $\mathbb L^n\cdot \PV\int_{\mathbb P(V)^{\mathcal S}}\eta$ for $\mathrm{div}\, (\mu_*\eta )= \sum_{i = 1}^d u_i \cdot \mathbb P(V_i)$, assuming that $\eta$ has no logarithmic poles. Recall from the proof of Proposition \ref{prop1} above, that   $\eta$ has no logarithmic poles 
		iff $b_W\neq 0$ in the notation from that proof with $b_W$ depending on $\eta$. Explicitly,  $b_W=c_W(\bm u)=\mathrm{codim}\, W + \sum_{W\subset V_i} u_i$. 
	\end{remark}


	\begin{lemma}\label{non-vanishing of formal PVI is determined generically by PVI} The following are equivalent:
		\begin{enumerate}[(a)]
		\item $\mathcal F_{\mathcal S} \neq 0$ (and, respectively, $\dim K\neq 1$).
		
		\item There is some multivalued rational $n$-form $\eta$ without  logarithmic poles on $\mathbb P(V)^{\mathcal S}$ whose divisor has support in $\cup_{W\in \cS} E_W$ (respectively, exactly $\cup_{W\in \cS} E_W$) such that $\PV\int_{\mathbb P(V)^{\mathcal S}} \eta \neq 0$.
		 
		 \item  	For  generic multivalued rational $n$-forms $\eta$  without  logarithmic poles on $\mathbb P(V)^{\mathcal S}$ whose divisors have support in $\cup_{W\in \cS} E_W$ (respectively, exactly $\cup_{W\in \cS} E_W$),  $\PV\int_{\mathbb P(V)^{\mathcal S}} \eta \neq 0$. Here genericity is  as in Theorem \ref{prop2}.  
		 \end{enumerate}
	\end{lemma}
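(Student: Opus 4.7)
I would prove the cycle $(c)\Rightarrow(b)\Rightarrow(a)\Rightarrow(c)$. The implication $(c)\Rightarrow(b)$ is immediate. For $(b)\Rightarrow(a)$, if $\eta$ corresponds to $\bm u\in\bQ^d\cap\{n+1+\sum s_i=0\}$ via $\mathrm{div}(\mu_*\eta)=\sum u_i\bP(V_i)$, then by Remark~\ref{rmkSpP} the specialization $\mathcal{F}_\cS|_{\bm s=\bm u}$ equals $\bL^n\cdot\PV\int_{\bP(V)^\cS}\eta\neq 0$, forcing $\mathcal{F}_\cS\neq 0$ in $\mathrm{Frac}(R)$. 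In the ``exactly'' branch an additional check rules out $\dim K=1$: then $c_K=-1$ is constant, so $(\bL-1)/(\bL^{c_K}-1)=-\bL$ is a polynomial, and since $K$ is the minimum element of $\cS$ one can pair each chain $I\not\ni K$ with $I\cup\{K\}$; using Corollary~\ref{E_I_circ forumla} and the observation that $\bP(W_1)\setminus\cup_{W\in\cS^{W_1}}\bP(W)$ is an $\bA^1$-bundle over $\bP(W_1/K)\setminus\cup_W\bP(W/K)$ (because every edge of $A$ contained in $W_1$ contains $K$), one obtains $[E_I^\circ]=\bL\cdot[E_{I\cup\{K\}}^\circ]$, and the pairs cancel, yielding $\mathcal{F}_\cS=0$ and contradicting the nonzero specialization.

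The substantive step is $(a)\Rightarrow(c)$, which I would handle by a standard generic specialization argument. Write $\mathcal{F}_\cS=P/Q$ with coprime $P,Q\in R$. By Lemma~\ref{coefficients of si pm1}(2),(3) each $\bL^{c_W}-1$ is prime in $R$, and distinct $W$ produce distinct principal primes up to the involution $c_W\leftrightarrow -c_W$, so $Q$ is, up to a unit, a finite product of factors $\bL^{c_W}-1$. Hence $Q(\bm u)\neq 0$ in $\bQ(\bL^{1/q})$ whenever the no-logarithmic-pole condition $b_W\neq 0$ holds for all $W$. For $P$, using $R\simeq\bC[u^{\pm 1},v_1^{\pm 1},\ldots,v_{d-1}^{\pm 1}]$ I would expand $P=\sum_{\bm\alpha}c_{\bm\alpha}u^{\alpha_0}v_1^{\alpha_1}\cdots v_{d-1}^{\alpha_{d-1}}$ as a finite sum with $c_{\bm\alpha}\in\bC^\times$. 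The substitution $v_j\mapsto u^{u_j}$ gives $P(\bm u)=\sum_{\bm\alpha}c_{\bm\alpha}u^{\alpha_0+\sum_j\alpha_j u_j}$, which vanishes in $\bQ(\bL^{1/q})$ only when distinct exponents collide---confining the bad $\bm u$ to the finite union of affine hyperplanes $\{(\alpha_0-\alpha_0')+\sum_j(\alpha_j-\alpha_j')u_j=0\}$ indexed by pairs of monomials of $P$ with nonzero coefficients.

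Assembling, the locus of $\bm u$ in $\bQ^d\cap\{n+1+\sum s_i=0\}$ yielding $\mathcal{F}_\cS|_{\bm s=\bm u}=0$ lies in a finite union of proper rational affine hyperplanes: the log-pole hyperplanes $\{b_W=0\}$, optionally the ``support exactly'' hyperplanes $\{b_W=1\}$ (which are proper precisely when $c_W$ is non-constant, i.e.\ when $W\neq K$), together with the collision hyperplanes from $P$. Its complement is Zariski dense and contains a Zariski-dense set of rational points, each of which yields the required $\eta$, recovering the notion of ``generic'' from Theorem~\ref{prop2}. The main anticipated obstacle is to verify nonemptiness of this open set in the ``exactly'' case; here the hypothesis $\dim K\neq 1$ together with $\mathcal{F}_\cS\neq 0$ is exactly what suffices, since the telescoping observation above shows that $\dim K=1$ is precisely the degenerate configuration already excluded by $\mathcal{F}_\cS\neq 0$.
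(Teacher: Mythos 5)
Your argument is correct, and the heart of it — clearing denominators to land in the Laurent‐polynomial ring $R$ and then observing that the specialization of the resulting numerator can only vanish when two distinct exponents in $M$ collide, confining the bad locus to a finite union of proper rational hyperplanes — is exactly the paper's argument, which it carries out with $\mathcal G_\cS:=\prod_{W\in\cS}(\bL^{c_W}-1)\,\mathcal F_\cS\in R$ rather than with a coprime pair $P/Q$; the two formulations are interchangeable (you should just record explicitly that $Q\mid\prod_{W\in\cS}(\bL^{c_W}-1)$ because every nonzero term of $\mathcal F_\cS$ has a chain $I\subset\cS$ as index and distinct primes in its denominator).

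The one place you genuinely diverge is the treatment of the clause $\dim K\neq 1$ in the ``exactly'' branch of $(b)\Rightarrow(a)$. You supply a direct telescoping cancellation showing that $\dim K=1$ forces $\mathcal F_\cS=0$; this is correct, and is precisely the case $k=1$ of the paper's later Proposition~\ref{non-essential PVI vanishes}, so your route front-loads a special case of that result rather than quoting it. The paper's own proof of this lemma instead appeals to a claim that $D_\cS\setminus\cup_{W\in\cS}\{c_W(\bm s)=1\}$ is non-empty iff $\dim K\neq 1$ — but since $b_K=-\dim K$ is constant and never equals $0$ or $1$, that space is non-empty whenever $d\ge1$ irrespective of $\dim K$, so your derivation of $\dim K\neq1$ as a consequence of $\mathcal F_\cS\neq0$ is actually the cleaner justification. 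Your closing sentence, however, still misattributes the role of $\dim K\neq1$ to non-emptiness of the parameter space; it is not needed for that, only for consistency with the redundant extra clause in statement~(a).
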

	\begin{proof} Clearly (c) implies (b).
		For the rest, define first $\mathcal G_{\mathcal S}\in R$ by
\be\label{eqGs}\textstyle
		\mathcal F_{\mathcal S} = {\mathcal G_{\mathcal S}}\cdot{\prod_{W\in \mathcal S}(\mathbb L^{c_{W}}-1)^{-1}}
\ee		
 	So $\mathcal F_{\mathcal S} = 0$ if and only if $\mathcal G_{\mathcal S} = 0$. We can write $\mathcal G_{\mathcal S}$ uniquely as
$			\mathcal G_{\mathcal S} = \sum_{m \in Z} \alpha_m \mathbb L^m$ with $\alpha_m \in \mathbb C,
$		where $Z\subset M$ is a finite subset such that $\alpha_m \neq 0$ for all $m\in Z$.
		
		As in the proof of Proposition \ref{prop1}, the parameter space of multivalued rational $n$-forms  without logarithmic poles on $\mathbb P(V)^{\mathcal S}$ whose divisor  is supported in $\cup_{W\in\cS}E_W$	
		can be  identified with the hyperplane arrangement complement 
		$$
		D_\cS:=(\bQ^d\cap\{n+1+\textstyle\sum_{i=1}^ds_i=0\})\setminus \cup_{W\in\cS}\{c_W(\bm s)=0\},
		$$
		 where $c_W$ are viewed as linear polynomials in $\bm s=(s_1,\ldots, s_d)$. This space is non-empty since $0\not\in\cS$, and thus Zariski open and dense in $\bA_\bQ^{d-1}\cong\{n+1+\textstyle\sum_{i=1}^ds_i=0\}$. If we insist that the forms have support exactly $\cup_{W\in\cS}E_W$, we must  also remove  $\cup_{W\in\cS}\{c_W(\bm s)=1\}$ from $D_\cS$. The resulting space is non-empty if and only if $\dim K\neq 1$.

	Since the specializations of $m\neq m'\in Z$ give two different  functions, there is a dense open subset $U$ of $D_\cS$ such that for all $\bm u\in U$ the values $m(\bm u)$, $m\in Z$, are pairwise different. Namely, $U$ is the complement of $\cup_{m\neq m'\in Z}\{m(\bm s)-m'(\bm s)=0\}$ in $D_\cS$.
		
		  (a) $\Rightarrow$ (c): If $\mathcal G_{\mathcal S} \neq 0$ then  $Z \neq \emptyset$. Since $\mathbb L^{m(\bm u)}$, $m\in Z$, are pairwise different for fixed $\bm u\in U$, we have $\PV\int_{\mathbb P(V)^{\mathcal S}} \omega^{1/q} \neq 0$ if the divisor of $\omega^{1/q}$ on $\bP^n$ is $\sum_{i=1}^d u_i\cdot \bP(V_i)$. 
If in addition $\dim K\neq 1$, we can find such $\bm u$ in $U\setminus\cup_{W\in\cS}\{c_W(\bm s)=1\}$. 
		
		(b) $\Rightarrow$ (a):  $\mathcal F_{\mathcal S} \neq 0$ since $\PV\int_{\mathbb P(V)^{\mathcal S}} \omega^{1/q}$ is a specialization of $\mathcal F_\cS$.
	\end{proof}

\begin{proof}[Proof of Theorem \ref{prop2}.] Note that $X=P(V)^{\mathcal S}$.	By Lemma \ref{Genericall_non_vanishing_1}, the constant term of $\mathbb L^n  \;\PV\int_{\mathbb P(V)^{\mathcal S}} \omega^{1/q}$ is non-zero for some $\omega^{1/q}$ without  logarithmic poles on $\mathbb P(V)^{\mathcal S}$ and with divisor supported in, or even exactly, $\cup_{W\in\cS}E_W$. Hence $\mathbb L^n \; \PV\int_{\mathbb P(V)^{\mathcal S}} \omega^{1/q}\neq 0$, and so the condition (b) of Lemma \ref{non-vanishing of formal PVI is determined generically by PVI} is satisfied. But this is equivalent to the condition (c) of Lemma \ref{non-vanishing of formal PVI is determined generically by PVI}, hence the claim follows.
\end{proof}		

\begin{proof}[Proof of Corollary \ref{propC2}.] 
For the proof we will replace $n$ by $n+1$ so that the notation agrees with the running notation in this section.
Let $A=\cup_{i=1}^df^{-1}_i(0)$ be the associated hyperplane arrangement in $V=\bC^{n+1}$. The blowing up $\ol\mu$ of all edges of $A$ in increasing dimension produces a log resolution of $(V,A)$. The first blow up is that of the origin; here the exceptional divisor is $\bP(V)$ and it intersects the inverse image of $A$ in the projectivized arrangement $\bP(A)$. Moreover, the canonical resolution of $(\bP(V),\bP(A))$ is obtained by base change from $\ol\mu$. Let $g=\prod_{i=1}^df_i^{m_i}$ for $\bm m =(m_1,\ldots, m_d)\in\bZ_{>0}^d$. Consider $R_{\bm 0}$ the ``residue" of the motivic zeta function of $g$ corresponding to the strict transform of the first exceptional divisor of $\ol\mu$, as in Remark \ref{rmkMPIVres}. Note that the numerical genericity condition $\al_W\neq 0$ for all $W\in \cS$ as in Remark \ref{rmkMPIVres} is achieved for $\bm m$ outside the arrangement $\cup_{W\in\cS}\{\nu_W \sum_{i=1}^dm_i-(n+1)\sum_{V_i\supset W}m_i=0\}$. By Remark \ref{rmkMPIVres}, $R_{\bm 0}$ is a motivic principal value integral for a form with divisor $\sum_{i=1}^d(\al_{V_i}-1)\bP(V_i)$ on $\bP(V)$. It follows as in the proof of
Theorem \ref{prop2} that for  $\bm m$ outside another arrangement, this motivic principal value integral is non-zero. Taking $\Theta$ to define the arrangement containing the ``bad" $\bm m$,  we have that
 $-\nu_{\bm 0}/N_{\bm 0}=-(n+1)/\sum_{i=1}^dm_i$ for $\bm m\in\bZ^d_{>0}\setminus\{\Theta=0\}$ is a pole of $Z^{\mathrm{mot}}_{g}(s)$, where $\nu_{\bm 0}/N_{\bm 0}$ is as in Remark \ref{rmkMPIVres} for $D_i$ equal to the strict transform of first exceptional divisor $\bP(V)$,
  since by Remark \ref{rmkMPIVres} the non-vanishing motivic principal value integral implies the non-vanishing of the residue of this pole in this case.
\end{proof}
	
	\subsection{Non-essential and decomposable cases.}\label{subNndd}	
	
	\begin{definition}
		For $W \in \mathcal S$, $c_W$ is called a {\it pole of $\mathcal F_{\mathcal S}$} if $\mathcal G_{\mathcal S}$ is not in the ideal $((\mathbb L^{c_W}-1)^{\kappa_W})$, where $\kappa_W = \#\{W'\in \mathcal S \mid c_{W'} = \pm c_{W}\}$ and $\cG_\cS$ is as in (\ref{eqGs}). 	
		Since $R$ is a UFD and $c_W \neq 0$, we can define the order of $\mathbb L^{c_W}-1$ in the numerator $\cG_\cS$, and in the denominator $\prod_{W\in\cS}(\bL^{c_W}-1)$ of $\cF_\cS$, and a pole occurs when the strict inequality $<$ holds between these two orders.  By Lemma \ref{coefficients of si pm1} we see that this is equivalent to the above definition.
	Consequently, if no $c_W$ is a pole, then $\mathcal F_{\mathcal S} \in R$.
	\end{definition}

	\begin{lemma}\label{FPVI with no pole}
		If there is no pole in $\mathcal F_{\mathcal S}$, then $\mathcal F_{\mathcal S} \in \mathbb C[\mathbb L]$.
	\end{lemma}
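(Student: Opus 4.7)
The plan is to use the hypothesis $\cF_\cS\in R$ together with Remark \ref{rmkSpP} and Corollary \ref{corDefRing} to run a density argument on the parameter hyperplane $\{u_1+\cdots+u_d=-(n+1)\}\subset \bQ^d$. Via the isomorphism $R\simeq \bC[u^{\pm 1},v_1^{\pm 1},\ldots,v_{d-1}^{\pm 1}]$ from \ref{subsection_formalization}, I will write $\cF_\cS = \sum_{(k,\bm j)\in Z'} c_{k,\bm j}\,u^k v_1^{j_1}\cdots v_{d-1}^{j_{d-1}}$ with $Z'\subset \bZ\times \bZ^{d-1}$ finite and all $c_{k,\bm j}\neq 0$. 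The desired conclusion $\cF_\cS\in \bC[\bL]$ is equivalent to $Z'\subset \bZ_{\ge 0}\times \{\bm 0\}$.

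Assume for contradiction that some $(k^*,\bm j^*)\in Z'$ has $\bm j^*\neq \bm 0$ or $k^*<0$. My plan is to produce $\bm u=(u_1,\ldots,u_d)\in \bQ^d$ with $\sum u_i=-(n+1)$ that simultaneously satisfies (a) $c_W(\bm u)\neq 0$ for every $W\in \cS$, (b) the specialized exponents $k+\sum_{i<d} j_i u_i$ are pairwise distinct over $(k,\bm j)\in Z'$, and (c) $k^*+\sum_{i<d} j_i^* u_i<0$. Conditions (a) and (b) each exclude finite unions of affine hyperplanes in the parameter space $\bQ^{d-1}$, so their intersection is Zariski dense; for (c), either $\bm j^*\neq \bm 0$, in which case the nonzero linear functional $\sum_{i<d} j_i^* u_i$ on $\bQ^{d-1}$ takes arbitrarily negative values, or $\bm j^*=\bm 0$ and $k^*<0$ makes (c) automatic. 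Thus a common $\bm u$ exists.

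For this $\bm u$, Remark \ref{rmkSpP} identifies the specialization $\cF_\cS(\bm u)$ with $\bL^n\cdot\PV\int_{\bP(V)^\cS}\eta$ for $\eta$ the rational $n$-form with $\mathrm{div}(\mu_*\eta)=\sum u_i\,\bP(V_i)$. Condition (a) guarantees $\eta$ has no logarithmic poles, so Corollary \ref{corDefRing} places this specialization in $\bZ\llbracket \bL^{1/q}\rrbracket$. But termwise specialization of the Laurent polynomial, together with (b), yields $\cF_\cS(\bm u)=\sum c_{k,\bm j}\,\bL^{k+\sum_{i<d} j_i u_i}$ without cancellation, and (c) isolates the monomial $c_{k^*,\bm j^*}\bL^{k^*+\sum j_i^* u_i}$ with nonzero coefficient and strictly negative exponent, contradicting the power series property. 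The only delicate point is matching the algebraic specialization $R\to \bC[\bL^{\pm 1/q}]$ with the motivic formula for $\PV$, which works precisely because (a) keeps each $(\bL-1)/(\bL^{c_W}-1)$ well defined; once that is in place, the argument is entirely routine.
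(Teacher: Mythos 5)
Your proposal is correct and takes essentially the same route as the paper: choose a rational specialization point $\bm u$ on the hyperplane $\{\sum u_i=-(n+1)\}$ avoiding the logarithmic-pole locus, then use Remark \ref{rmkSpP} and Corollary \ref{corDefRing} to place the specialized value in $\bZ\llbracket\bL^{1/q}\rrbracket$, and derive a contradiction from a surviving negative exponent. You are in fact a bit more explicit than the paper about condition (b) — that the specialized exponents must be pairwise distinct to rule out cancellation among monomials — a point the paper's argument uses implicitly when asserting that the term $\mathbb L^{m(\bm u)}$ appears in $\cF_\cS(\bm u)$; making it explicit is a small but genuine improvement in rigor.
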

	\begin{proof}
		We can write
$			\mathcal F_{\mathcal S} = \sum_{m\in B} \al_m \mathbb L^m$ with $\al_m \in \mathbb C$
		for a finite $B\subset M$. It suffices to show that $m \in \mathbb Z_{\geq 0}$ for all $m\in B$.
		Take $D_{\mathcal S}$ as in the proof of Proposition \ref{non-vanishing of formal PVI is determined generically by PVI}. Suppose that there is some $m\in B\setminus \mathbb Z_{\ge 0}$. Since $D_{\mathcal S}$ is dense in $\bA_\mathbb Q ^{d-1}$, there is some $\bm u \in D_{\mathcal S}$ such that $m(\bm u) < 0$.
		Take $q\in\bZ_{>0}$ such that $q u_i \in \mathbb Z$ for all $1\leq i\leq d$. Then the specialization $\mathcal F_{\mathcal S}(\bm u)$ is $\bL^n$ times the motivic principal value integral of some multivalued $n$-form as in Remark \ref{rmkSpP}. Thus it lies in $\bC\llbracket \bL^{1/q}\rrbracket$ by Corollary	\ref{corDefRing}. This is a contradiction, since we have a term $\mathbb L^{m(\bm u)}$ with negative degree in $\cF_\cS(\bm u)$.
	\end{proof}

	\begin{prop}\label{non-essential PVI vanishes}
		If a central hyperplane arrangement $A\subset V$ is not  essential, then $\mathcal F_{\mathcal S} = 0$.
	\end{prop}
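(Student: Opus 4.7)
My plan is to split $\cF_\cS$ according to whether the chain $I$ contains the center $K:=\bigcap_{i=1}^{d}V_i$, reduce each piece to the formal expression $\cF_{\cS'}$ attached to the essential quotient arrangement $A':=A/K\subset V':=V/K$, and observe that the two contributions cancel. Since $A$ is not essential, $\dim K\geq 1$, so $K\in\cS$; moreover $K\subset W$ for every $W\in\cS$ because every $V_i$ contains $K$, so $K$ is the minimum element of $\cS$ and $\phi\colon W\mapsto W/K$ is a bijection $\cS\setminus\{K\}\xrightarrow{\sim}\cS'$.

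The key technical input is a factorization of each $[E_I^\circ]$ compatible with $\phi$. If $K\in I$ and $J=I\setminus\{K\}$, Corollary~\ref{E_I_circ forumla} isolates the leading factor $[U(\cS_{\bm 0}^K)]$, which equals $[\bP(K)]=[\bP^{\dim K-1}]$ because no $V_i$ cuts a proper hyperplane inside $K$ (so $\cS_{\bm 0}^K=\emptyset$); the remaining factors coincide with those of $[E'^\circ_{\phi(J)}]$, since $\cS_{W_i}^{W_{i+1}}$ depends only on the quotient $W_{i+1}/W_i$ and all the $W_i$ in the chain contain $K$. Hence $[E_I^\circ]=[\bP^{\dim K-1}]\cdot[E'^\circ_{\phi(J)}]$. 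If instead $K\notin I$ with minimum element $W_1\supsetneq K$, the same corollary isolates $[U(\cS_{\bm 0}^{W_1})]$, the class of the complement of the non-essential induced arrangement on $\bP(W_1)$ centred at $K$. The projection $\bP(W_1)\setminus\bP(K)\to\bP(W_1/K)$ is a Zariski-locally-trivial $\bA^{\dim K}$-bundle, and its restriction yields $[U(\cS_{\bm 0}^{W_1})]=\bL^{\dim K}\cdot[U(\cS'^{W'_1}_{\bm 0})]$. Combined with the matching of the subsequent factors, this gives $[E_I^\circ]=\bL^{\dim K}\cdot[E'^\circ_{\phi(I)}]$.

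The rest is formal. For $W\in\cS\setminus\{K\}$ the element $c_W\in M$ coincides as a $\mathbb{Z}$-linear combination of $1,s_1,\ldots,s_d$ with its counterpart $c'_{\phi(W)}$ attached to $A'$, while $c_K=\codim_V K+\sum_{i=1}^d s_i=(n+1-\dim K)-(n+1)=-\dim K$ in $R$ by the defining relation of $M$. Writing $\cF_{\cS'}$ for the formal expression of $A'$ evaluated in $R$, the two factorizations sum to
\[
\cF_\cS=\Bigl([\bP^{\dim K-1}]\cdot\tfrac{\bL-1}{\bL^{-\dim K}-1}+\bL^{\dim K}\Bigr)\cF_{\cS'}.
\]
Using $[\bP^{\dim K-1}]=(\bL^{\dim K}-1)/(\bL-1)$ and $\bL^{-\dim K}-1=-\bL^{-\dim K}(\bL^{\dim K}-1)$, the coefficient reduces to $-\bL^{\dim K}+\bL^{\dim K}=0$, giving $\cF_\cS=0$.

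The only step I expect to require genuine argument is the affine-bundle identity $[U(\cS_{\bm 0}^{W_1})]=\bL^{\dim K}[U(\cS'^{W'_1}_{\bm 0})]$ used in the $K\notin I$ case; everything else is direct bookkeeping from Proposition~\ref{propBS} and Corollary~\ref{E_I_circ forumla}.
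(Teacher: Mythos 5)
Your proof is correct and takes essentially the same route as the paper's: both pair chains $I$ (not containing $K$) with $I\cup\{K\}$, use the product factorization of $[E_I^\circ]$ from Corollary~\ref{E_I_circ forumla} together with the $\bA^{\dim K}$-bundle $U(\cS_{\bm 0}^W)\to U(\cS_K^W)$, and reduce to the identity $\bL^{\dim K}+[\bP^{\dim K-1}]\frac{\bL-1}{\bL^{-\dim K}-1}=0$. The only difference is cosmetic: you factor out $\cF_{\cS'}$ globally, whereas the paper cancels the paired terms directly without introducing the quotient arrangement; one should just note that your ``$\cF_{\cS'}$ evaluated in $R$'' means reinterpreting the $c'_{\phi(W)}$ as $c_W\in M$ (the quotient relations in $M$ and $M'$ differ), which is harmless since all you need is that the scalar coefficient vanishes.
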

	\begin{proof} By assumption, $k:=\dim K\neq 0$, where recall that $K=\cap_{i=1}^dV_i$. Then
$$		\textstyle	\mathcal F_{\mathcal S} = \sum_{I \subsetneq \mathcal S\setminus \{K\}} \left([E_I^\circ]+[E_{I\cup \{K\}}^\circ]\frac{\mathbb L-1}{\mathbb L^{c_K}-1}\right) \prod_{W\in I} \frac{\mathbb L-1}{\mathbb L^{c_W}-1}.
$$		It suffices to prove that the term
$			[E_I^\circ]+[E_{I\cup \{K\}}^\circ](\mathbb L-1)(\mathbb L^{c_K}-1)^{-1} = 0.
$		Suppose $I=\{W_i\mid 1\le 1\le r\}$ with $K\neq W_1\subsetneq W_2 \subsetneq \ldots \subsetneq W_r$, and take $W_0 =  \bm 0 $. Then by Corollary \ref{E_I_circ forumla},
$$	\textstyle		[E_I^\circ]  = [U(\mathcal S_{ \bm 0 }^{W_1})]\cdot \prod_{i = 2}^{r+1} [U(\mathcal S_{W_{i-1}}^{W_i})],
\quad		[E_{I\cup \{K\}}^\circ]  = [U(\mathcal S_{ \bm 0 }^K)]\cdot [U(\mathcal S_{K}^{W_1})]\cdot \prod_{i = 2}^{r+1} [U(\mathcal S_{W_{i-1}}^{W_i})].
$$		For $W \in \mathcal S\cup\{V\}$, we have
$			U(\mathcal S_{ \bm 0 }^{W}) = U(\mathcal S_{K}^W) \times \mathbb C^{k}.
$		We also have $c_K = -k\in M$, and $U(\mathcal S_{ \bm 0 }^K) = \mathbb P^{\,k-1}$. So it suffices to prove that
		$
			\mathbb L^{k} + [\mathbb P^{\,k-1}](\mathbb L-1)(\mathbb L^{-k}-1)^{-1} = 0,
		$
which is easy.
	\end{proof}
	
	By Lemma \ref{non-vanishing of formal PVI is determined generically by PVI}, this implies:
	
	\begin{cor}
	If a central hyperplane arrangement $A\subset V$ is not  essential, then $\PV\int_{\bP(V)^\cS}\omega^{1/q}=0$ for every multivalued rational $n$-form $\omega^{1/q}$ without logarithmic poles whose divisor has support contained in the inverse image of $\bP(A)$.
	\end{cor}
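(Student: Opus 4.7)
The corollary is an immediate consequence of the two preceding results: Proposition \ref{non-essential PVI vanishes}, which gives the formal vanishing $\cF_\cS = 0$ in the non-essential case, and Lemma \ref{non-vanishing of formal PVI is determined generically by PVI}, which transports this formal vanishing into the vanishing of all admissible motivic principal value integrals via the specialization principle of Remark \ref{rmkSpP}. My plan is simply to chain these two results together.

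Concretely, the steps are as follows. First, invoke Proposition \ref{non-essential PVI vanishes} to obtain $\cF_\cS = 0$ in the fraction field of $R$. Second, given any $\omega^{1/q}$ as in the hypothesis, write $\divi(\mu_*\omega^{1/q}) = \sum_{i=1}^d(a_i-1)\bP(V_i)$ and set $\bm u = (a_1,\ldots,a_d)$; this specifies a specialization $s_i \mapsto a_i$ of the formal variables of $R$. The no-logarithmic-poles assumption ensures $c_W(\bm u) = b_W \neq 0$ for every $W \in \cS$, so each factor $\bL^{c_W}-1$ in the denominator of $\cF_\cS$ specializes to a nonzero element and the specialization is unambiguous. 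By Remark \ref{rmkSpP} it equals $\bL^n \cdot \PV\int_{\bP(V)^\cS}\omega^{1/q}$, and since the specialization of $0$ is $0$, the integral vanishes. Equivalently, one may invoke the implication (b)$\Rightarrow$(a) in Lemma \ref{non-vanishing of formal PVI is determined generically by PVI} contrapositively: a nonvanishing $\PV$ would force $\cF_\cS \neq 0$, contradicting Proposition \ref{non-essential PVI vanishes}.

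I do not anticipate any substantial obstacle, as all the non-trivial content is already packaged in Proposition \ref{non-essential PVI vanishes}, whose proof in turn reduces to the elementary identity $\bL^k + [\bP^{k-1}](\bL-1)(\bL^{-k}-1)^{-1} = 0$ with $k = \dim K \ge 1$, obtained after pairing each subset $I \subsetneq \cS \setminus \{K\}$ with $I \cup \{K\}$ and applying the product formula of Corollary \ref{E_I_circ forumla}. The only point worth checking explicitly is that the corollary permits divisors whose support is contained in (not necessarily equal to) $\mu^{-1}(\bP(A))$, so some $b_W$ may take the value $1$; however, this affects neither the well-definedness of the specialization nor the content of Lemma \ref{non-vanishing of formal PVI is determined generically by PVI}, both of which require only $b_W \neq 0$.
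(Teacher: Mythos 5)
Your proof is correct and follows the paper's route exactly: the corollary in the paper is stated with only the one-line justification ``By Lemma \ref{non-vanishing of formal PVI is determined generically by PVI}, this implies:'', which is precisely the chain Proposition \ref{non-essential PVI vanishes} (giving $\cF_\cS=0$) followed by the contrapositive of (b)$\Rightarrow$(a) in Lemma \ref{non-vanishing of formal PVI is determined generically by PVI}. One small notational slip: Remark \ref{rmkSpP} parametrizes divisors as $\divi(\mu_*\eta)=\sum_i u_i\,\bP(V_i)$, so after writing $\divi(\mu_*\omega^{1/q})=\sum_i(a_i-1)\bP(V_i)$ the specialization should be $\bm u=(a_1-1,\ldots,a_d-1)$ rather than $(a_1,\ldots,a_d)$, and then $c_W(\bm u)=b_W$ as required; this does not affect the argument.
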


 We  introduce some additional notation. As before, $\mathcal S$ is the set of all edges of $A\subset V$ different than $\bm 0$, and $\cS^{W}_{W'}$ is the set of edges different than the origin of the associated arrangement in $W/W'$ for $W'\subsetneq W$ with $W', W\in \cS\cup\{\bm 0, V\}$. For $W''\in I\subset\cS^{W}_{W'}$ we define $_{W/W'}  E_{W''}$ to be the prime divisor on the canonical log resolution of $(\mathbb P(W/W'), \cS^W_{W'})$ corresponding to $W''$. Let
\be\label{eqW0}
_{W/W'} E_I:=\cap_{W''\in I} \;\;{_{W/W'}} E_{W''},   \quad\quad  _{W/W'} E_I^\circ:={_{W/W'}}E_I\setminus \cup_{W''\in \cS^{W}_{W'}\setminus I} \;\;{_{W/W'}} E_{W''}.
\ee

	\begin{lemma}\label{exceptioanl divisor circ cut by V}
		With notation of subsection \ref{subsection_canonical_log_resolution}, if $W \in \mathcal S$ and $I\subset \mathcal S^{W}$, then 
$			[E_{I\cup \{W\}}^\circ] = [{_{W/\bm 0}  E_I^\circ}]  [U(\mathcal S^{V}_{ W})]$ in $K_0(Var_\bC)$, with $_{W/\bm 0}  E_I^\circ$ as in (\ref{eqW0}).
\end{lemma}
\begin{proof}
		It is enough to consider a chain $I = \{W_1 \subsetneq \ldots \subsetneq W_r\}$ where $W_r\subsetneq  W$. Let  $W_0=\bm 0$, $W_{r+1} =  W$. 
				Then by Corollary \ref{E_I_circ forumla}, we have
$		[E_{I\cup \{W\}}^\circ]  = \prod_{i = 1}^{r+1} [U(\mathcal S_{W_{i-1}}^{W_i})]  \cdot [U(\mathcal S^{V}_{W_{r+1}})]$ and $ [{_{V'\times W} E_I^\circ}] = \prod_{i = 1}^{r+1} [U ((\cS^{V'\times W})_{W_{i-1}}^{W_i} )].
$
By definition, the pairs $(W_i/W_{i-1}, \cS_{W_{i-1}}^{W_i})$ and $(W_i/W_{i-1},U((\cS^{V'\times W})_{W_{i-1}}^{W_i}))$ define the same hyperplane arrangement. Hence, $[U(\mathcal S_{W_{i-1}}^{W_i})] = [U((\cS^{V'\times W})_{W_{i-1}}^{W_i})]$.
	\end{proof}

		We suppose now that $A\subset V$ is a central decomposable arrangement. So, we can assume that there exist $n', n''\in\bZ_{>0}$ with $n=n'+n''$, and there exist non-constant reduced homogeneous polynomials $f'\in \mathbb C[x_1,\ldots,x_{n'}]$ and $f''\in \mathbb C[x_{n'+1},\ldots,x_{n}]$, such that $A = \{f'f'' = 0\}$. Set $V'=\bC^{n'}$, $V''=\bC^{n''}$, so that $V=V'\times V''$.
		 $\cT$ be the set of  edges of $\{f'' = 0\} \subset  V''$, possibly equal to the origin in $V''$, together with $V''$.

	\begin{lemma}\label{Vanishing for concentrated sum}  If  $W \in \mathcal T$ then
		$T(V,\cS, V'\times W):=\sum_{I\subset \mathcal S^{V'\times W}\cup\{V'\times W\}} [E_I^\circ] (1-\mathbb L)^{\vert I \vert} = 0 \in \mathbb C[\mathbb L].$ One has also an obvious counterpart by switching the roles of $V'$ and $V''$.

	\end{lemma}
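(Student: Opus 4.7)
\emph{Proof plan.}
The strategy is to exploit the product structure forced by decomposability: $V = V' \times V''$, $A = A' \cup A''$, and every edge of $A$ factors as $P_1 \times P_2$ with $P_1$ an edge of $A' \cup \{V'\}$ in $V'$ and $P_2$ an edge of $A'' \cup \{V''\}$ in $V''$. Consequently, the elements of $\mathcal{S}^{V' \times W} \cup \{V' \times W\}$ are parameterized by pairs $(P_1, P_2)$ with $P_2 \subseteq W$, and chains therein correspond to monotone paths in this product poset.

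By Corollary~\ref{E_I_circ forumla}, $[E_I^\circ] = \prod_{i=1}^{r+1}[U(\mathcal{S}^{W_i}_{W_{i-1}})]$ with $W_0 = \bm{0}$ and $W_{r+1} = V$. For each gap $W_{i-1} \subsetneq W_i$, the induced arrangement in $W_i/W_{i-1} = (W'_i/W'_{i-1}) \times (W''_i/W''_{i-1})$ inherits the decomposable structure, so I may apply the identity $[U(B_1 \cup B_2)] = (\bL - 1)\cdot[U(B_1)]\cdot[U(B_2)]$, valid for a central decomposable arrangement in a product of two non-trivial vector spaces. (This follows from the affine product $[Y \setminus B] = [Y_1 \setminus B_1]\cdot[Y_2 \setminus B_2]$ combined with the $\mathbb{G}_m$-bundle relation $[Y \setminus B] = (\bL-1)[U(B)]$ for central $B$.) Each gap contribution thereby factors into a first-factor part, a second-factor part, and, for ``mixed'' gaps where both factor-quotients are non-trivial, an extra $\bL-1$.

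I then reorganize $T$ by conditioning on the projected sequences of distinct first- and second-factor values of the chain and summing over all interleavings of pure-first, pure-second, and mixed gaps that produce them. Combined with the weight $(1-\bL)^{|I|}$ and the mixed-gap $\bL-1$ factors, the sum over interleavings telescopes and $T$ factors as $T = T^{(1)} \cdot T^{(2)}$, where $T^{(1)}$ depends only on $A'$ and $T^{(2)}$ only on the restriction of $A''$ to chains inside $W$. The desired vanishing reduces to $T^{(2)} = 0$: for $W$ a proper edge of $A''$ or the origin, the relevant chain sum vanishes by the mechanism of Proposition~\ref{non-essential PVI vanishes}, since the arrangement becomes non-essential after passage to a suitable quotient of $V''$; the boundary case $W = V''$ is handled separately, for instance by an induction on $\dim V''$ using the counterpart obtained by switching $V'$ and $V''$.

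The principal technical obstacle is the combinatorial bookkeeping of the interleaving sum and the careful treatment of degenerate gaps with a trivial factor-quotient, where the decomposable identity requires modification since one ``factor'' collapses. An alternative inductive approach on $\dim V''$ splits $T = T_0 + (1-\bL)\,T_1$ according to whether $V' \times W \in I$, applies Lemma~\ref{exceptioanl divisor circ cut by V} to rewrite $T_1$ in the log resolution of $(\bP(V' \times W), \mathcal{S}^{V' \times W})$, and identifies the cancellation $T_0 = (\bL-1)[U(\mathcal{S}^V_{V' \times W})]\,T'_0$ by matching the ``top'' gap factors $[U(\mathcal{S}^V_{W_r})]$ against $[U(\mathcal{S}^{V' \times W}_{W_r})]$ using the decomposable product structure in $V/W_r$.
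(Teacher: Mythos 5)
Your primary route — globally factoring the sum as $T = T^{(1)}\cdot T^{(2)}$ over interleaved chains and then killing $T^{(2)}$ ``by the mechanism of Proposition~\ref{non-essential PVI vanishes}'' — has a real gap. First, the factorization itself is asserted but not established: the weight $(1-\bL)^{|I|}$ and the per-gap factors $[U(\cS^{W_i}_{W_{i-1}})]$ do not split transparently into independent first- and second-factor contributions, because chains in $\cS^{V'\times W}\cup\{V'\times W\}$ are general shuffles, and the $(\bL-1)$ produced by each ``mixed'' gap interacts with the $(1-\bL)$ weight in a way you would have to track exactly to get a clean product. Second, and more decisively, the claimed vanishing of $T^{(2)}$ is unjustified: for $W$ a dense proper edge of $A''$, the arrangement $A''$ restricted to (and quotiented along) $W$ can perfectly well be essential and indecomposable, so Proposition~\ref{non-essential PVI vanishes} simply does not apply. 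You do not supply the separate argument you acknowledge is needed for $W = V''$ either. So this route, as written, does not close.

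Your secondary sketch — splitting $T = T_0 + (1-\bL)\,T_1$ by whether $V'\times W\in I$ and matching top-gap factors — is in fact the route the paper takes, and it is cleaner because the cancellation is \emph{term-by-term}, not global: for each chain $I = \{W_1\subsetneq\cdots\subsetneq W_r\}\subset\cS^{V'\times W}$ one shows
\[
[E_I^\circ] + (1-\bL)\,[E_{I\cup\{V'\times W\}}^\circ] = 0,
\]
which by Corollary~\ref{E_I_circ forumla} reduces (after cancelling the common product over the first $r$ gaps) to the single identity
\[
[U(\cS^V_{W_r})] \;=\; (\bL-1)\,[U(\cS^{V'\times W}_{W_r})]\,[U(\cS^V_{V'\times W})].
\]
This is the crux and you only gesture at it. The paper proves it by passing to the affine complements $M$, $M_1$, $M_2$ of the induced arrangements in $V/W_r$, $(V'\times W)/W_r$, $V/(V'\times W)$; each is a $\bC^*$-bundle over the corresponding $U(\cdot)$ (when nonempty), and since $A$ is decomposable, $W_r$ is a product edge, so $M \cong M_1\times M_2$ in $V/W_r = (V'\times W)/W_r \times V/(V'\times W)$. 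You should make this identity explicit and prove it; without it neither of your sketches is a proof. Note also that ``Lemma~\ref{exceptioanl divisor circ cut by V}'' is not actually needed here — the matching is already built into Corollary~\ref{E_I_circ forumla}.
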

	\begin{proof} Since increasing chains  in $\mathcal S^{V'\times W}\cup\{V'\times W\}$ can be divided into two types, having $V'\times W$ as the tail or not, it suffices to prove  for an increasing chain $ I = \{W_1 \subsetneq \ldots \subsetneq W_r \}$ in $\cS^{V'\times W}$ that 
$			[E_I^\circ] + [E_{I\cup \{V'\times W\}}^\circ](1-\mathbb L) = 0.
$		By Corollary \ref{E_I_circ forumla} we see that it suffices to prove that $[U(\mathcal S_{W_r}^V)] = [U(\mathcal S_{W_r}^{V'\times W})]  [U(\mathcal S_{V'\times W}^{V})]  (\mathbb L-1)$. 
Let $M$, $M_1$, $M_2$ be the complements in $V/W_r$, $(V'\times W)/W_r$, $V/(V'\times W)$, respectively, of the  hyperplane arrangements induced by $A$. Since the complement of an affine central hyperplane arrangement is a $\bC^*$-fibration over the complement of the induced projective arrangement, and since $\bL-1$ is not a zero divisor in $\bC[\bL]$, we see that it is enough to show that $[M]=[M_1][M_2]$. Since $A$ is decomposable, $W_r=W'\times W$ for some edge $W'$ of $\{f_1=0\}\subset V_1$.  Then $M=M_1\times M_2$, from which the claim follows.
	\end{proof}

		\begin{lemma}\label{Decomposable constant FPVI is zero}
		If $\mathcal F_{\mathcal S} \in \mathbb C[\mathbb L]$, then $\mathcal F_{\mathcal S} = 0$. 
	\end{lemma}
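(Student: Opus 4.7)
The goal is to show $\cF_\cS = 0$ under the running assumption that $A = \{f'f''=0\}$ is decomposable with $V = V'\times V''$, given $\cF_\cS \in \bC[\bL]$. The central input is Lemma \ref{Vanishing for concentrated sum} (together with its symmetric partner obtained by swapping the roles of $V'$ and $V''$), which provides the vanishing $T(V,\cS,V'\times W)=0$ for every $W\in\cT$.

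My plan is to reorganize the chain-indexed sum
\[\textstyle
\cF_\cS = \sum_{\bm 0=W_0\subsetneq W_1\subsetneq\cdots\subsetneq W_r\subsetneq W_{r+1}=V}\ \Bigl(\prod_{i=1}^{r+1}[U(\cS^{W_i}_{W_{i-1}})]\Bigr)\prod_{i=1}^{r}\frac{\bL-1}{\bL^{c_{W_i}}-1}
\]
(obtained via Corollary \ref{E_I_circ forumla}) into a linear combination of the vanishing quantities $T(V,\cS,V'\times W)$. For a chain with top element $W_r=W'_r\times W''_r$, the key identity from the proof of Lemma \ref{Vanishing for concentrated sum}, namely $[U(\cS^V_{W_r})]=[U(\cS^{V'\times W''_r}_{W_r})]\cdot[U(\cS^V_{V'\times W''_r})]\cdot(\bL-1)$, lets me factor the ``top'' class along $V'\times W''_r$ whenever $W''_r\in\cT$. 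This factorization naturally pairs each chain $I$ with the refined chain $I\cup\{V'\times W''_r\}$ (when the latter is legal), matching exactly the telescoping used inside $T(V,\cS,V'\times W''_r)$.

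The hypothesis $\cF_\cS\in\bC[\bL]$ makes this rigorous by specialization: since $\cF_\cS$ has no $\bL^{s_i}$-dependence in $R$, its value is preserved under any specialization of the $s_i$. A convenient family sends $\bL^{c_{V'\times W}}\to 0$ (so $\frac{\bL-1}{\bL^{c_{V'\times W}}-1}\to 1-\bL$) for $W\in\cT$; under this limit the portion of $\cF_\cS$ coming from chains sitting below $V'\times W$ collapses onto the pattern of $T(V,\cS,V'\times W)$, which vanishes. Performing the analogous degeneration in the $V''$-direction, and comparing both specializations with the constant value $\cF_\cS\in\bC[\bL]$, should force $\cF_\cS=0$.

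The hard part will be the combinatorial bookkeeping: ensuring every chain $W_1\subsetneq\cdots\subsetneq W_r$ in $\cS$, including those with mixed $W_i=W'_i\times W''_i$, is captured exactly once and that no uncancelled residual survives the $T$-matching. A cleaner alternative is induction on $\min(\dim V',\dim V'')$, using the factorization $[U(\cS^V_{W_r})]=[U(\cS^{V'\times W''_r}_{W_r})]\cdot[U(\cS^V_{V'\times W''_r})](\bL-1)$ to peel off one factor of the decomposition at each step and reducing, in the base case, to a lower-dimensional instance already handled by Proposition \ref{non-essential PVI vanishes}.
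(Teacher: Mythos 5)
Your overall strategy -- specialize the formal variables $s_i$ and use the vanishing of $T(V,\cS,V'\times W)$ from Lemma~\ref{Vanishing for concentrated sum} -- is indeed the paper's strategy, but the proposal leaves the crucial steps unworked, and one of them cannot be filled in as you envision. The key point your outline misses is that a specialization must accomplish \emph{two} things simultaneously: the factors $\frac{\bL-1}{\bL^{b_W}-1}$ for $W\subset V'\times W_0$ must degenerate to $1-\bL$ modulo $\bL^{r+1}$ (which forces $b_W\gg 0$, not ``$\to 0$'' for a single edge as you write), and, just as importantly, the factors for every other edge $W$ must degenerate to $0$ modulo $\bL^{r+1}$ (which forces $b_W\ll 0$). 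Only then does the specialization of $\cF_\cS$ modulo $\bL^{r+1}$ collapse to exactly one instance of the $T$-vanishing, with no residual cross-terms. The paper achieves this with a single choice $u_1\gg 0$, $u_2,\ldots,u_d\ll 0$ calibrated so that $b_W\gg 0$ iff $W\subset V_1$; your proposal gestures at this but never pins down a specialization that kills the complementary chains, which is where the actual work lies.

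A second genuine gap: when the common intersection $K=\cap_i V_i$ is nonzero, the coefficient $c_K=-\dim K$ is an honest negative integer in $M$, so $b_K$ is \emph{independent of the specialization} and can never be made large negative. Chains through $K$ therefore do not die modulo $\bL^{r+1}$, and the paper devotes the second half of its proof to a separate telescoping argument that absorbs the $K$-terms via two applications of Lemma~\ref{Vanishing for concentrated sum}. Your proposal does not notice this case at all.

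Finally, your suggested cleaner alternative -- induction on $\min(\dim V',\dim V'')$ with base case handed off to Proposition~\ref{non-essential PVI vanishes} -- does not go through. Decomposability and non-essentiality are independent conditions: a decomposable arrangement with both factors essential has $K=\{\bm 0\}$ and is essential, so the base case is not a non-essential arrangement and Proposition~\ref{non-essential PVI vanishes} does not apply. Moreover the factorization identity you invoke, $[U(\cS^V_{W_r})]=[U(\cS^{V'\times W''_r}_{W_r})]\cdot[U(\cS^V_{V'\times W''_r})]\cdot(\bL-1)$, is a class identity in $K_0(Var_\bC)$ valid for product edges; it does not by itself produce a recursion on $\cF_\cS$ because the rational-function weights $\frac{\bL-1}{\bL^{c_W}-1}$ do not factor correspondingly without a specialization or residue argument. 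You would effectively be reproving the specialization step under a different name.
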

	\begin{proof}
		Suppose $\mathcal F_{\mathcal S} = \sum_{i = 0}^r \al_i \mathbb L^i\in \mathbb C[\mathbb L] $, where $\al_i \in \mathbb C$ and $r \in \mathbb N$. 
	Consider the specialization $\bm s\to \bm u$ from Remark \ref{rmkSpP}, 		giving rise to the specialization $\mathcal F_{\mathcal S}\to \mathbb L^n\cdot \PV\int_{\mathbb P(V)^{\mathcal S}}\eta$ for $\mathrm{div}\, (\mu_*\eta )= \sum_{i = 1}^d u_i \cdot \mathbb P(V_i)$ when $\bm u$ is generic enough so that $\eta$ has no logarithmic poles. In the case of our lemma, $\cF_\cS$ has constant specializations since $\cF_\cS$ is a constant in the variables $s_i$.

	Consider the specialization $s_i$ to $u_i \in \mathbb Z$  such that $u_1 + \ldots  +u_d = -n-1$ and with  $u_2,\ldots ,u_d \ll 0$ very negative. So $u_1 \gg 0$ is very positive. We can assume that $u_1\gg -(u_2+\ldots+u_{d-1})\gg 0$, that is, $u_1$ is very large compared with $-(u_2+\ldots+u_{d-1})$ which is also sufficiently large.
	Note that indeed this avoids logarithmic poles as in Remark \ref{rmkSpP}. 
	To make the bounds on the $u_i$ precise, let $\theta > n+r+2$ be a large positive integer and we take  $u_2,\ldots ,u_d < -\theta$. We will use that for any subset $\Lambda\subsetneq \{2,\ldots,d\}$, we have $u_1 = -n-1-(u_2+\ldots+u_d) > -n-1-\sum_{j\in \Lambda} u_j + (d-1-\vert \Lambda\vert) \theta > -\sum_{j\in \Lambda} u_j+r+1$.
	
Let $K$ be the smallest edge of $A$. Since $A$ is decomposable, $K=K'\times K''$ for some subspaces $K'\subset V'$ and $K'' \subset V''$.

		For $W \in \mathcal S$, recall that $b_W = \mathrm{codim}\, W + \sum_{W\subset V_i} u_i$  and $\bm 0\not\in \mathcal S$. 
If $K =  \bm 0$, then $b_W \gg 0$ if $W \subset V_1$, and $b_W \ll 0$ if $W \not\subset V_1$. The bounds on $b_W$ can be made effective in terms of the previously chosen bounds on the $u_i$. Explicitly, if $K =  \bm 0$, then $b_W > \codim W+r+1 > 0$ if $W \subset V_1$, and $b_W < \codim W - \sum_{W \subset V_i} \theta < -r-1 < 0$ if $W \not\subset V_1$.
		
	Expand the specialization of $\cF_\cS$ to $\bm u$ into a power series. By \eqref{eq*}, its residue class in $\mathbb Z[[L]]/(\mathbb L^{r+1})$ is
$			\sum_{i = 0}^r \al_i \mathbb L^i = \sum_{I \subset \mathcal S^{V_1}\cup\{V_1\}} [E_I^\circ] (1-\mathbb L)^{\vert I\vert}=0
$		by Lemma \ref{Vanishing for concentrated sum}. If $K \neq  \bm 0 $, then $b_W > \codim W + r+ 1$ if $K \neq W \subset V_1$,  $b_W < -r-1$ if $W \not\subset V_1$, and   $b_K=-\dim K<0$. Divide the chains in $\mathcal S$ into two types, having $K$ as the head or not. Again, modulo $\mathbb L^{r+1}$, by \eqref{eq*} we have 
$$\textstyle			\sum_{i = 0}^r \al_i \mathbb L^i = \sum_{K \notin I \subset \mathcal S^{V_1}\cup\{V_1\}} (1-\mathbb L)^{\vert I\vert}\left([E_I^\circ]  + [E_{I\cup\{K\}}^\circ] (\bL-1)(\bL^{\dim K}+\bL^{2\dim K}+\ldots) \right).$$
 Denote by  $p':V=V'\times V''\to V'$ the first projection. We have that 
$$
\textstyle
\sum_{K \notin I \subset \mathcal S^{V_1}\cup\{V_1\}} (1-\mathbb L)^{\vert I\vert}[E_I^\circ] =
 T(V, \cS, p'(V_1)\times V'') - [K]\cdot T(V/K,\cS^V_K,p'(V_1)/K'\times V''/K'') =0$$
by Lemma \ref{Vanishing for concentrated sum}. 
So 
$			\sum_{i = 0}^r \al_i \mathbb L^i = - (1+\bL^{\dim K}+\bL^{2\dim K}+\ldots)\sum_{K\in J\subset\cS^{V_1}\cup\{V_1\}}[E_J^\circ](1-\bL)^{|J|},$
	but the sum over $J$ here equals
			$ T(V,\cS,p'(V_1)\times V'') - \sum_{K\not\in J\subset\cS^{V_1}\cup\{V_1\}}[E_J^\circ](1-\bL)^{|J|} =0.
$	\end{proof}

	 	\begin{prop}\label{Vanishing_in_decomposable_case} If a central hyperplane arrangement $A\subset V$ is decomposable, then $\cF_\cS=0$.
	\end{prop}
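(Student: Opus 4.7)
The plan is to combine Lemma~\ref{FPVI with no pole} and Lemma~\ref{Decomposable constant FPVI is zero}. By Lemma~\ref{Decomposable constant FPVI is zero}, once $\mathcal F_{\mathcal S}\in \mathbb C[\mathbb L]$ the proposition follows; and by Lemma~\ref{FPVI with no pole}, the inclusion $\mathcal F_{\mathcal S}\in \mathbb C[\mathbb L]$ follows from the no-pole condition, namely that $(\mathbb L^{c_W}-1)^{\kappa_W}$ divides $\mathcal G_{\mathcal S}$ in $R$ for every $W\in \mathcal S$. The whole statement therefore reduces to checking this no-pole condition under the decomposability hypothesis.

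I would prove the no-pole condition by induction on $\dim V$, with the base $\dim V\le 1$ vacuous since no decomposable arrangement exists there. For the inductive step, write $V=V'\times V''$ and $A=A'\cup A''$ with $A'\subset V'$, $A''\subset V''$ cut out by non-constant reduced polynomials in disjoint variables. Every $W\in \mathcal S$ decomposes as $W=W'\times W''$ with $W'\in \mathcal S(A')\cup\{V'\}$, $W''\in \mathcal S(A'')\cup\{V''\}$, $(W',W'')\neq (V',V'')$, and $c_W=c_{W'}+c_{W''}$ (setting $c_{V'}=c_{V''}=0$). Fix such a $W$ and consider the residue of $\mathcal F_{\mathcal S}$ at $c_W$, i.e.\ the leading coefficient of its $(\mathbb L^{c_W}-1)$-adic expansion. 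In Remark~\ref{rmkClo}'s expression, only chains $I$ containing some $W^*\in \mathcal S$ with $c_{W^*}=\pm c_W$ contribute. Applying Proposition~\ref{Budur-Saito_Description_of_E_I} and Corollary~\ref{E_I_circ forumla}, one can factor $[E_I^\circ]$ as a product of classes $[U(\mathcal S_{W_{i-1}}^{W_i})]$ and recognize the residue, up to a nonzero monomial in $\mathbb L$, as (a $\mathbb Z$-linear combination of) the $\mathcal F$-series of the hyperplane arrangements induced on $W$ (and on $V/W$) by $A$.

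The induction hypothesis plus Proposition~\ref{non-essential PVI vanishes} then finishes the argument: if $W=W'\times W''$ with both factors proper, the induced arrangement on $W$ is again decomposable but in a strictly lower-dimensional ambient space, so its $\mathcal F$-series vanishes by induction; if $W=V'\times W''$ or $W=W'\times V''$, the induced arrangement on $W$ is non-essential, so its $\mathcal F$-series vanishes by Proposition~\ref{non-essential PVI vanishes}. In either case the residue at $c_W$ is zero, so $c_W$ is not a pole; completing the induction gives the proposition.

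The main obstacle is the precise identification of the residue at $c_W$ with a lower-dimensional $\mathcal F$-series. Although $V=V'\times V''$, the canonical log resolution of $\mathbb P(A)$ is \emph{not} the product of the canonical log resolutions of $\mathbb P(A')$ and $\mathbb P(A'')$ (the inductive blowup of edges mixes edges coming from both factors), so $\mathcal F_{\mathcal S}$ does not literally factor as $\mathcal F_{\mathcal S'}\cdot \mathcal F_{\mathcal S''}$. Chains $W_0\subsetneq\ldots\subsetneq W_r$ through $W$, or through several edges with $c$-value $\pm c_W$ when $\kappa_W\ge 2$, must be organized with care, and one has to exhibit the cancellation of pairs of putative poles with opposite $c$-values. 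The toy case $n=1$, $A=\{x_1x_2=0\}$, where $c_{V_1}=-c_{V_2}$ and the two putative poles cancel against each other, already displays this phenomenon.
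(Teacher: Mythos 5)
Your overall strategy --- reduce to the no-pole condition via Lemmas~\ref{FPVI with no pole} and \ref{Decomposable constant FPVI is zero}, then kill each potential pole $c_W$ by an induction on $\dim V$ that identifies the residue with $\mathcal F$-series of lower-dimensional arrangements on $W$ and $V/W$ --- is exactly the paper's. But your case analysis for the residue is incorrect, and the cancellation you acknowledge as an obstacle is not a side issue but the substantive step of the proof.

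Concretely: for an edge $W=V'\times W''$ with $\{\bm 0\}\subsetneq W''\subsetneq V''$, the arrangement induced in $W$ is $A'\times W''\cup V'\times(A''|_{W''})$. With $A$ essential, $A'$ is essential, and using $\{0\}=\bigcap_i V_i''=W''\cap\bigcap_{V_i''\not\supset W''}V_i''$ one sees $A''|_{W''}$ is essential too; so the induced arrangement in $W$ is decomposable \emph{and essential}, and Proposition~\ref{non-essential PVI vanishes} does not apply as you claim (the vanishing still holds, but by the induction hypothesis via decomposability). Worse, for $W=V'\times\{\bm 0\}$ the induced arrangement in $W$ is $A'$ itself and the one in $V/W$ is $A''$; both are essential and in general indecomposable, so neither lower-dimensional $\mathcal F$-series vanishes and the putative pole at $c_W$ cannot be killed factor-by-factor. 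This is precisely the case your argument leaves open. The paper resolves it by observing $c_{V'\times\{\bm 0\}}+c_{\{\bm 0\}\times V''}=n+1+\sum_i s_i\equiv 0$ in $M$, so these two potential poles are at opposite exponents; since $\mathcal F_{V'\times\{\bm 0\}}=\mathcal F^{\{\bm 0\}\times V''}$ and $\mathcal F^{V'\times\{\bm 0\}}=\mathcal F_{\{\bm 0\}\times V''}$, the two contributions are $\frac{\bL-1}{\bL^{c}-1}\mathcal F_1\mathcal F_2$ and $\frac{(1-\bL)\bL^{c}}{\bL^{c}-1}\mathcal F_1\mathcal F_2$ for $c=c_{V'\times\{\bm 0\}}$, whose sum has no pole at $c$. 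That identity is what you would need to prove.

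Two further points. First, the residue must be identified, modulo $(\bL^{c_W}-1)$, with the \emph{product} $\mathcal F^W\cdot\mathcal F_W$, not merely ``a $\mathbb Z$-linear combination'' of $\mathcal F$-series; the product structure comes from Proposition~\ref{Budur-Saito_Description_of_E_I} and Corollary~\ref{E_I_circ forumla} (a chain through $W$ splits into a chain below and a chain above, and $[E_I^\circ]$ factors accordingly). Without the product, vanishing of one lower-dimensional factor would give you nothing. Second, the inductive base cannot stop at $\dim V\le 1$: $\dim V=2$ already has a decomposable arrangement $\{x_1x_2=0\}$, and, as you note, it exhibits exactly the cancellation phenomenon; either treat it directly (the paper appeals to Theorem~\ref{prop3}) or make sure the cancellation argument covers it.
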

\begin{proof}
		We induct on the dimension of $V$.  		
		This is easy to check for $n = \dim V=2$, when $A$ can only be the union of two lines and the vanishing follows from Theorem \ref{prop3} and Lemma \ref{non-vanishing of formal PVI is determined generically by PVI}. The case $n=3$ is covered via Lemma \ref{non-vanishing of formal PVI is determined generically by PVI} by \cite[0.4]{Vanishing_of_Principal_Value_Integrals_on_Surfaces} mentioned in the introduction, since decomposability is equivalent to $\chi(\bP(V)\setminus\bP(A))=0$. We address now higher dimensions.

			If $\mathcal F_{\mathcal S}$  has no poles, the claim follows by Lemma \ref{FPVI with no pole} and Lemma \ref{Decomposable constant FPVI is zero}. So it suffices to prove $\mathcal F_{\mathcal S}$ has no poles.   By Theorem \ref{non-essential PVI vanishes}, we may also assume that $A$ is essential.
		
%
%
%
%
%
		
		Suppose $c_{Z}$ with $Z \in \mathcal S$ is a pole. Then it is contributed by those chains in $\cS$ containing some $W$ such that $c_{W} = \pm c_Z$. By Lemma \ref{coefficients of si pm1}, no more than one $W$  in a chain can have $c_W=\pm c_Z$. Therefore, the sum of all terms in $\cF_\cS$ that may contribute to the pole $c_Z$ is
\be\label{eqWW}		\textstyle	\sum_{W\in\cS: c_W = \pm c_Z}\sum_{W \in I \subset \mathcal S} [E_I^\circ]  \prod_{W'\in I} \frac{\mathbb L-1}{\mathbb L^{c_{W'}}-1}.
\ee

For a chain $I = \{W_1\subsetneq \ldots  \subsetneq  W_r \subsetneq W \subsetneq W'_1 \subsetneq \ldots  \subsetneq W'_{r'}\}$ in $\cS$ we have
\be\label{eqPro}
\textstyle			[E_I^\circ] = \left(\prod_{i = 1}^{r+1} U\left(\mathcal S_{W_{i-1}}^{W_i}\right)\right)  \left(\prod_{i = 1}^{r'+1} U\left(\mathcal S_{W_{i-1}'}^{W_i'}\right)\right),
\ee	where $W_0 = \{\bm 0\}, W_{r+1} = W'_{0} = W$, and $W'_{r'+1} = V$, by Corollary \ref{E_I_circ forumla}.  Since $J = \{W_1\subsetneq \ldots  \subsetneq W_r\}$ and $H = \{W'_1 \subsetneq \ldots  \subsetneq W'_{r'}\}$ run through all increasing chains in $\mathcal S^W$ and $\mathcal S_W$ respectively,  $\sum_{W \in I \subset \mathcal S} [E_I^\circ]  \prod_{W'\in I} \frac{\mathbb L-1}{\mathbb L^{c_{W'}}-1}$ can be split as as follows. By Corollary \ref{E_I_circ forumla} and \eqref{eqPro}, we have
\begin{displaymath}\textstyle
	[_{W/\bm 0} E_J^\circ] \cdot [_{V/W}E_H^\circ] = \left(\prod_{i = 1}^{r+1} U\left(\mathcal S_{W_{i-1}}^{W_i}\right)\right)  \cdot \left(\prod_{i = 1}^{r'+1} U\left(\mathcal S_{W_{i-1}'}^{W_i'}\right)\right) = [E_{J \cup H \cup \{W\}}^\circ].
\end{displaymath}
Hence,
\begin{align*}
	& \textstyle \sum_{W \in I \subset \mathcal S} [E_I^\circ]  \prod_{W'\in I} \frac{\mathbb L-1}{\mathbb L^{c_{W'}}-1} \\
	& \textstyle= \sum_{J \subset \mathcal S^W, H \subset \mathcal S_W} [E_{J \cup H \cup \{W\}}^\circ] \prod_{W'\in J \cup H \cup \{W\}} \frac{\mathbb L-1}{\mathbb L^{c_W'}-1} \\
	& \textstyle= \sum_{J \subset \mathcal S^W, H \subset \mathcal S_W} [_{W/\bm 0} E_J^\circ] \cdot [_{V/W}E_I^\circ] \prod_{W'\in J \cup H \cup \{W\}} \frac{\mathbb L-1}{\mathbb L^{c_W'}-1} \\
	& \textstyle = \frac{\mathbb L-1}{\mathbb L^{c_W}-1}\left (\sum_{{J}\subset \mathcal S^W} [_{W/\bm 0}E_{{J}}^\circ] \prod_{W'\in {J}} \frac{\mathbb L-1}{\mathbb L^{c_{W'}}-1}\right) \left (\sum_{{H}\subset \mathcal S_W} [_{V/W}E_{{H}}^\circ] \prod_{W'\in {H}} \frac{\mathbb L-1}{\mathbb L^{c_{W'}}-1}\right) .
\end{align*}


	For $(W,\mathcal S^W)$ with $\cS^W\neq\emptyset$ and for $(V/W,\mathcal S_W^{V})$ with $\cS^V_W\neq\emptyset$, consider their formal motivic principal value integrals $\mathcal F_{\mathcal S^W} \in \mathrm{Frac}(\mathbb C[\mathbb L^{M^W}])$ and $\mathcal F_{\mathcal S_W^V} \in \mathrm{Frac}(\mathbb C[\mathbb L^{M_W}])$. Here 
$$ \textstyle M^W=\left(\bZ\oplus\bigoplus_{W'\in\cS^W:\, \codim_WW'=1}\bZ \hat  s_{W'}\right)/\bZ\left(\dim W+\sum_{W'\in\cS^W:\,\codim_WW'=1}\hat s_{W'}\right),$$
$$ \textstyle
 M_W=\left(\bZ\oplus\bigoplus_{i:W\subsetneq V_i}\bZ\bar s_i\right)/\bZ\left(\dim V/W +\sum_{i:W\subsetneq V_i}\bar s_i\right),
$$
and $1, \hat s_{W'}, \bar s_i$ are formal symbols.

	There is a monomomorphism
$		 M^W \hookrightarrow M/\mathbb Z c_W$ defined by $\hat{s}_{W'} \mapsto \sum_{i: V_i\cap W= W'} s_i$ and  $1\mapsto 1.
$ It gives a monomorphism
$			\mathbb C[\mathbb L^{M^W}] \hookrightarrow \mathbb C[\mathbb L^M]/(\mathbb L^{c_W}-1).
$	
We note that for $W''\in \cS^W$, the element $\hat c_{W''}:=\codim_WW''+\sum_{W': W''\subset W', \,\codim_WW'=1}\hat s_{W'}\in M^W$ from the definition of $\cF_{\cS^W}$, is mapped to the class of $c_{W''}$ in $M/\bZ c_W$.

We also have a monomorphism
		$ M_W \hookrightarrow M/(\mathbb Z c_W)$ defined by  $ \bar{s}_{i} \mapsto s_{i}$ and $1\mapsto 1,
$		and  a corresponding ring monomorphism
$			\mathbb C[\mathbb L^{M_W}] \hookrightarrow \mathbb C[\mathbb L^M]/(\mathbb L^{c_W}-1).
$		
For $W'\in \cS_W$, the image of the element $\ol c_{W'}:=\codim_{V/W}(W'/W) +\sum_{i: W'\subset V_i}\ol s_i\in M_W$ is the class of $c_{W'}$ in $M/\bZ c_{W'}$.

Denote the images of $\mathcal F_{\mathcal S^W}$ and $\mathcal F_{\mathcal S_W^V}$ in $\mathbb C[\mathbb L^M]/(\mathbb L^{c_W}-1)$ by $\mathcal F^W$ and $\mathcal F_W$, respectively.
Then
$$\textstyle			\sum_{I\subset \mathcal S^W} [_{W/\bm 0}E_{I}^\circ] \prod_{W'\in I} \frac{\mathbb L-1}{\mathbb L^{c_{W'}}-1} \equiv \mathcal F^W\ \mathrm{modulo}\ (\mathbb L^{c_W}-1).
$$ Here, both sides are considered in the quotient of the localization of $\bC[\bL^{M}]$  with respect to $\bL^{c_{W'}}-1$ for $W'\in\cS^W$ by the ideal generated by $\mathbb L^{c_W}-1$. This is the same as taking the quotient modulo $\mathbb L^{c_W}-1$ first and then localizing, since each $\mathbb L^{c_{W'}}-1$ is coprime to $\mathbb L^{c_W}-1$ by Lemma \ref{coefficients of si pm1} . 
 Similarly,
$$	\textstyle		\sum_{I\subset \mathcal S_W} [ _{V/W}E_{I}^\circ] \prod_{W'\in I} \frac{\mathbb L-1}{\mathbb L^{c_{W'}}-1} \equiv \mathcal F_W\ \mathrm{modulo}\, (\mathbb L^{c_W}-1).
$$		Hence,
$			\sum_{W \in I \subset \mathcal S} [E_I^\circ]  \prod_{W'\in I} \frac{\mathbb L-1}{\mathbb L^{c_{W'}}-1}$ is equal to $(\bL-1)/(\bL^{c_W}-1)$ times a term that is congruent to 
$\mathcal F_W\mathcal F^W$ modulo $(\bL^{c_W}-1)$.		
		
	If $Z \not\in\{  \bm 0  \times V'', V'\times  \bm 0\}$, then the same holds for $W$ with $c_W = \pm c_Z$. 
	Then one of $(W,\mathcal S^W)$ and $(V/W,\mathcal S_W^{V})$ must come from a decomposable arrangement. By the induction hypothesis, $\mathcal F_W\mathcal F^W=0$. Hence $\sum_{W \in I \subset \mathcal S} [E_I^\circ] \cdot \prod_{W'\in I} \frac{\mathbb L-1}{\mathbb L^{W'}-1}$ is in the ideal $(\bL^{c_W}-1)$ and  it does not contribute  to the pole $c_Z$.

		If $Z =  \bm 0 \times V''=:W_1$ or $Z=V'\times  \bm 0 =:W_2$. Then $c_{W_1}+c_{W_2} = n+1+\sum_{j = 0}^d s_j \equiv 0\in M$. Note that $\mathcal F_{W_1} = \mathcal F^{W_2} =: \mathcal F_1$ and $\mathcal F^{W_1} = \mathcal F_{W_2} =: \mathcal F_2$. Only the terms of (\ref{eqWW})  with $W=W_1, W_2$ can possibly contribute to the pole $c_Z = \pm c_{W_1} = \pm c_{W_2}$. These two terms are $\frac{\bL-1}{\bL^{c_{W_1}}-1}\cF_1\cF_2$ and $\frac{(1-\bL)\bL^{c_{W_1}}}{\bL^{c_{W_1}}-1}\cF_1\cF_2$, respectively. Hence again by induction we see that $c_Z$ is not a pole. 
	\end{proof}

	\begin{proof}[Proof of Theorem \ref{thm1}.] 
	Assume first that the support of $\divi(\omega^{1/q})$ is the whole inverse image of $\cA$ and let $U = X\setminus \mu^{-1}(\mathcal A)$.
	If the cone $A$ of $\cA\subset \bP^n$ is indecomposable, then $\chi(U) \neq 0$. Since this is also the Euler characteristic of all rank-one local systems on $U$, see \cite[Proposition 2.5.4]{Di}, 		
		the conjecture is trivially true in this case. If $A$ is decomposable, then $\cF_\cS=0$ by
	Proposition \ref{Vanishing_in_decomposable_case}. Hence in this case  $\PV \int_{X}\omega^{1/q}= 0$,
by	Lemma	\ref{non-vanishing of formal PVI is determined generically by PVI}.

Suppose now only that the support of $\divi(\omega^{1/q})$ contains $E_W$ for every dense edge $W$ of $A$. Then, by the additivity of topological Euler characteristic on algebraic varieties applied to $X\setminus|\divi(\omega^{1/w})|=U\cup (\cup_{I\cap \cS'=\emptyset}E_I^
\circ)$, we have
$$\textstyle
\chi(X\setminus|\divi(\omega^{1/w})|) =\chi(U)+\sum_{I\cap \cS'=\emptyset}\chi( E_I^\circ) = \chi(U) + \sum_{I\subset \cS''}\chi(E_I^\circ),
$$
where $\cS'$ is the subset of $W\in \cS$ such that $W\in |\divi(\omega^{1/w})|$ and $\cS''=\cS\setminus\cS'$. On the other hand, if $I\subset \cS''$ is a chain $W_1\subset\ldots\subset W_r\in\cS''$, since $W_r$ in non-dense we have $\chi(U(\cS^V_{W_r}))=0$, and hence also $\chi(E_I^\circ)=0$ by Corollary \ref{E_I_circ forumla}. Thus $\chi(X\setminus|\divi(\omega^{1/w})|) =\chi(U)$ and the proof now is exactly as in the previous case.
	\end{proof}

\begin{proof}[Proof of Theorem \ref{thrmConj2}] If the canonical log resolution of $(\bP^n,\bP(A))$ is a good log resolution with respect to $\bm a$, then the proof is the same as that of Theorem \ref{thm1}. There might be the case though that the canonical log resolution is a not a good log resolution with respect to $\bm a$. Note that our definition of $\cF_\cS$ is a particular case, for the canonical log resolution, of a multi-variable analog of the motivic zeta function of \cite[1.6 (i)]{Motivic_Principal_Value_Integrals_Veys}. Namely, for a log resolution $h:Y\to \bP^n$ of $\cA=\bP(A)=\cup_{i=1}^d\bP(V_i)$, let $S$ be the index set of the irreducible components $E_j$ of the union of $h^{-1}(\cA)$ with the exceptional locus, let $\nu_j-1$ be the multiplicity of $E_j$ in $K_{Y/ \bP^n}$, and $N_{ij}$ be the multiplicity of $E_j$ in $h^*(\bP(V_i))$. Define 
$$\textstyle
Z_{\bP^n}(\cA;s_1,\ldots, s_d):=\sum_{J\subset S}[E_I^\circ]\prod_{j\in J}\frac{\bL-1}{\bL^{\nu_j+\sum_{i=1}^dN_{ij}s_i}-1}.
$$
As in \cite{Motivic_Principal_Value_Integrals_Veys}, one can use the weak factorization theorem to show that $Z_{\bP^n}(\cA;s_1,\ldots, s_d)$ is independent of the choice of log resolution. Moreover, $\bL^n\PV(\bP^n,\bm a)$ is a specialization of $Z_{\bP^n}(\cA;s_1,\ldots, s_d)$ when $h$ is a good log resolution with respect to $\bm a$.
Now, $\cF_\cS$ is  the above formula for $Z_{\bP^n}(\cA;s_1,\ldots, s_d)$ when $h$ is the canonical log resolution, viewed in $\text{Frac}(R)$ as in \ref{subsection_formalization}. In the only case we need to consider, the case when $A$ is decomposable, $\cF_\cS=0$ by Proposition \ref{Vanishing_in_decomposable_case}. Hence $\PV(\bP^n,\bm a)=0$.
\end{proof}


\begin{thebibliography}{BMT11}

\bibitem[BMT11]{Monodromy_Conjecture_for_Hyperplane_Arrangement_Budur_Mustata_Teitler}
N.~Budur, M.~Musta\c{t}\u{a}, Z.~Teitler.
\newblock The monodromy conjecture for hyperplane arrangements.
\newblock {\em Geom. Dedicata}, 153: 131--137, 2011.

\bibitem[BS10]{Jumping_Coefficient_and_Spectrum_of_a_Hyperplane_Arrangement_Budur_Saito}
N.~Budur, M.~Saito.
\newblock Jumping coefficients and spectrum of a hyperplane arrangement.
\newblock {\em Math. Ann.}, 347: 545--579, 2010.

\bibitem[BSY11]{Local_Zeta_Function_And_b_Function_of_Certain_Hyperplane_Arragement_Budur_Saito_Sergey}
N.~Budur, M.~Saito,  S.~Yuzvinsky.
\newblock On the local zeta functions and the {$b$}-functions of certain
  hyperplane arrangements.
\newblock {\em J. Lond. Math. Soc.}, 84: 631--648, 2011.
\newblock With an appendix by Willem Veys.



\bibitem[Bud09]{B-uls}
N.~Budur.
\newblock Unitary local systems, multiplier ideals, and polynomial periodicity
  of {H}odge numbers.
\newblock {\em Adv. Math.}, 221: 217--250, 2009.

\bibitem[BW20]{BWabs}
N.~Budur, B.~Wang.
\newblock Absolute sets and the decomposition theorem.
\newblock {\em Ann. Sci. \'Ec. Norm. Sup\'er.}, 53: 469--536, 2020.


\bibitem[Den91]{Distribution_3}
J.~Denef.
\newblock Report on {I}gusa's local zeta function.
\newblock No. 201-203, 359--386.
\newblock {\em S\'eminaire Bourbaki}, Vol.\ 1990/91.

\bibitem[DJ98]{On_the_Vanishing_of_Principal_Value_Integrals_Denef_Jacobs}
J.~Denef, P.~Jacobs.
\newblock On the vanishing of principal value integrals.
\newblock {\em C. R. Acad. Sci. Paris S\'er. I Math.}, 326: 1041--1046, 1998.



\bibitem[DL98]{Motivic_Zeta_Function}
J.~Denef, F. ~Loeser.
\newblock Motivic {I}gusa zeta functions.
\newblock {\em J. Algebraic Geom.}, 7: 505--537, 1998.


\bibitem[Di04]{Di} A. Dimca. {\it Sheaves in topology.}  Springer-Verlag, Berlin, 2004. xvi+236 pp.

\bibitem[ESV92]{ESV}
H.~Esnault, V.~Schechtman, E.~Viehweg.
\newblock Cohomology of local systems on the complement of hyperplanes.
\newblock {\em Invent. Math.}, 109: 557--561, 1992.


\bibitem[Jac00a]{Distribution_1}
P.~Jacobs.
\newblock The distribution {$|f|^\lambda$}, oscillating integrals and principal
  value integrals.
\newblock {\em J. Anal. Math.}, 81: 343--372, 2000.

\bibitem[Jac00b]{Distribution_2}
P.~Jacobs.
\newblock Real principal value integrals.
\newblock {\em Monatsh. Math.}, 130: 261--280, 2000.

\bibitem[Lan83]{Langlands1}
R.~P. Langlands.
\newblock Orbital integrals on forms of {${\rm SL}(3)$}. {I}.
\newblock {\em Amer. J. Math.}, 105: 465--506, 1983.

\bibitem[LS89]{Langlands2}
R.~P. Langlands, D.~Shelstad.
\newblock Orbital integrals on forms of {${\rm SL}(3)$}. {II}.
\newblock {\em Canad. J. Math.}, 41: 480--507, 1989.

\bibitem[STV95]{Resolution_of_Hyperplane_Arrangements_Schchtman_Terao_Varchenko}
V.~Schechtman, H.~Terao,  A.~Varchenko.
\newblock Local systems over complements of hyperplanes and the {K}ac-{K}azhdan
  conditions for singular vectors.
\newblock {\em J. Pure Appl. Algebra}, 100: 93--102, 1995.

\bibitem[SZ24]{SZ24b}
Q.~Shi, H.~Zuo.
\newblock Variation of archimedean zeta function and $n/d$-conjecture for generic multiplicities.
\newblock {\em arXiv:2411.00757.}

\bibitem[Vey93]{VeyR} 
W.~Veys.
\newblock Poles of Igusa's local zeta function and monodromy.
\newblock {\em Bull. Soc. math. France}, 121: 545--598, 1993. 

\bibitem[Vey06]{Vanishing_of_Principal_Value_Integrals_on_Surfaces}
W.~Veys.
\newblock Vanishing of principal value integrals on surfaces.
\newblock {\em J. Reine Angew. Math.}, 598: 139--158, 2006.

\bibitem[Vey07]{Motivic_Principal_Value_Integrals_Veys}
W.~Veys.
\newblock On motivic principal value integrals.
\newblock {\em Math. Proc. Camb. Philos. Soc.}, 143: 543--555, 2007.

\bibitem[XY25]{XY} B. Xie, C. Yu. The $n/d$-Conjecture for nonresonant hyperplane arrangements. arXiv:2501.05189.



\end{thebibliography}
\end{document}